\numberwithin{equation}{section}
\theoremstyle{plain}
\newtheorem{theorem}{Theorem}[section]
\newtheorem{proposition}[theorem]{Proposition}
\newtheorem{lemma}[theorem]{Lemma}
\newtheorem{corollary}[theorem]{Corollary}
\theoremstyle{definition}
\newtheorem{remark}[theorem]{Remark}
\newtheorem{example}[theorem]{Example}
\newcommand{\UP}{\blacktriangle}                % Upper approximation 
\newcommand{\DOWN}{\blacktriangledown}          % Lower approximation 
\begin{document}

\title{Rough sets determined by tolerances}

\author[J.~J{\"a}rvinen]{Jouni J{\"a}rvinen}
\address{J.~J{\"a}rvinen, Sirkankuja 1, 20810~Turku, Finland}
\email{Jouni.Kalervo.Jarvinen@gmail.com}
\urladdr{\url{http://sites.google.com/site/jounikalervojarvinen/}}

\author[S.~Radeleczki]{S{\'a}ndor Radeleczki}
\thanks{Acknowledgements: The research of the second author was carried out as
part of the TAMOP-4.2.1.B-10/2/KONV-2010-0001 project supported by the
European Union, co-financed by the European Social Fund.}
\address{S.~Radeleczki, Institute of Mathematics\\ 
University of Miskolc\\3515~Miskolc-Egyetemv{\'a}ros\\Hungary}
\email{matradi@uni-miskolc.hu}
\urladdr{\url{http://www.uni-miskolc.hu/~matradi/}}

\begin{abstract}
We show that for any tolerance $R$ on $U$, the ordered sets of lower and upper rough approximations 
determined by $R$ form ortholattices. These ortholattices are completely distributive, thus forming 
atomistic Boolean lattices, if and only if  $R$ is induced by an irredundant covering of $U$, and in 
such a case, the atoms of these Boolean lattices are described.
We prove that the ordered set $\mathit{RS}$ of rough sets determined by a tolerance $R$ on $U$
is a complete lattice if and only if it is a complete subdirect product of the complete lattices 
of lower and upper rough approximations. We show that $R$ is a tolerance 
induced by an irredundant covering of $U$ if and only if $\mathit{RS}$ is an algebraic completely
distributive lattice, and in such a situation a quasi-Nelson algebra can be defined  on $\mathit{RS}$. 
We present necessary and sufficient conditions which guarantee that for a tolerance $R$ on $U$, 
the ordered set $\mathit{RS}_X$ is a lattice for all  $X \subseteq U$, where $R_X$ denotes the 
restriction of $R$ to the set $X$ and $\mathit{RS}_X$ is the corresponding set of rough sets.
We introduce the disjoint representation and the formal concept representation of rough
sets, and show that they are Dedekind--MacNeille completions of $\mathit{RS}$. 
\end{abstract}

\keywords{Rough set, tolerance relation, knowledge representation, representation of lattices, 
ortholattice, formal concept lattice}

\maketitle

\section{Introduction}

Rough sets were introduced in \cite{Pawl82} by Z.~Pawlak. The
key idea is that our knowledge about the properties of the objects of a 
given universe of discourse $U$ may be inadequate or incomplete in the sense that the
objects of the universe $U$ can be observed only within the accuracy of indiscernibility relations.
According to Pawlak's original definition, an indiscernibility relation $E$ on $U$ 
is an equivalence relation interpreted so that two elements of $U$ are $E$-related if they cannot 
be distinguished by their properties known by us. Thus, indiscernibility relations allow us to partition 
a set of objects into classes of indistinguishable objects. 
For any subset $X \subseteq U$, the \emph{lower approximation} $X^\DOWN$
of $X$ consists of  elements such that their $E$-class is
included in $X$, and the \emph{upper approximation} $X^\UP$ of $X$
is the set of the elements whose $E$-class intersects with $X$. This means that
$X^\DOWN$ can be viewed as the set of elements certainly belonging 
to $X$, because all elements $E$-related to them are also in $X$.
Similarly, $X^\UP$ may be interpreted as the set of elements that
possibly are in $X$, because in $X$ there is at least one
element indiscernible to them. The \emph{rough set} of $X$ 
is the pair $(X^\DOWN,X^\UP)$ and the set of all 
rough sets is
\begin{equation*}
 \mathit{RS} = \{ (X^\DOWN, X^\UP)  \mid X \subseteq U \}. 
\end{equation*}
The set $\mathit{RS}$  may be canonically ordered by the
coordinatewise order: 
\begin{equation*}
(X^\DOWN,X^\UP) \leq (Y^\DOWN,Y^\UP) \iff X^\DOWN \subseteq Y^\DOWN \mbox{ \
and \ } X^\UP \subseteq Y^\UP. 
\end{equation*}

In \cite{PomPom88} it was proved that $\mathit{RS}$ is a lattice 
which forms also a Stone algebra.  Later this result was improved in 
\cite{Com93} by showing that $\mathit{RS}$ is in fact a regular double Stone algebra. 
Therefore, $\mathit{RS}$ determines also a three-valued {\L}ukasiewicz algebra 
and a semi-simple Nelson algebra, because it is well known that these three types of algebras
can be transformed to each other \cite{Pagliani97}. 

In the literature can be found numerous generalizations of rough sets such that 
equivalences are replaced by relations of different types. For instance, it is
known that in the case of quasiorders (reflexive and transitive binary relations),
a Nelson algebra such that  the underlying rough set lattice is an 
algebraic lattice can be defined on $\mathit{RS}$  \cite{JarRad,JRV09}.
If rough sets are determined by relations that are symmetric and transitive, 
then the structure of $\mathit{RS}$ is analogous to the case of equivalences \cite{Jarv04}.
For a more general approach in the case of partial equivalences, see \cite{Mani08}.
There exist also studies in which approximation operators are defined in terms
of an arbitrary binary relation -- this idea was first proposed in \cite{YaoLin1996}.
In \cite{Dzik2013}, expansions of bounded distributive lattices equipped with a Galois connection
are represented in terms of rough approximation operators defined by arbitrary binary relations. One may
also observe that in the current literature new approximation operators based on different
viewpoints are constantly being proposed (see e.g. \cite{AbuDonia2012,Ma2012} for some recent studies).

In this paper, we assume that indiscernibility relations are tolerances 
(reflexive and symmetric binary relations). The term \emph{tolerance relation} was introduced
in the context of visual perception theory by E.~C.~Zeeman \cite{Zeeman62}, 
motivated by the fact that indistinguishability of ``points'' in the visual 
world is limited by the discreteness of retinal receptors.
One can argue that tolerances suit better for representing indistinguishability
than equivalences, because transitivity is the least obvious property of indiscernibility. 
Namely, we may have a finite sequence of objects $x_1, x_2, \ldots, x_n$ such that each 
two consecutive objects $x_i$ and $x_{i+1}$ are indiscernible, but there is a notable difference
between $x_1$ and $x_n$. It is known  \cite{Jarv99,Jarv01} that in the case of tolerances, 
$\mathit{RS}$ is not necessarily a lattice if the cardinality of $U$ is greater than four. 
Our main goals in this work are to find conditions under which $\mathit{RS}$ forms a lattice, 
and, in case $\mathit{RS}$ is a lattice, to study its properties.
 
As mentioned, originally rough set approximations were defined in terms of equivalences,
being bijectively related to partitions. In this paper, we consider tolerances, which
are closely connected to coverings. In the literature can be found several ways to define approximations in terms of coverings
(see recent surveys in \cite{Restrepo2013,Yao2012}), and in this work we connect our approximation operators 
to some covering-based approximation operators, also.

The paper is organized as follows: In Section~\ref{Sec:Preliminaries}, we
present the definition of rough approximation operators and present their essential 
properties. In addition, we give preliminaries of Galois connections, ortholattices, and formal
concepts. Section~\ref{Sec:ToleranceApproximations}
is devoted to the rough set operators defined by tolerance relations.
Starting from the well-known fact that for any tolerance on $U$, the pair $({^\UP},{^\DOWN})$
is a Galois connection on the power set lattice of $U$ and characterize
rough set approximation pairs as certain kind of Galois connections
$(F,G)$ on a power set. We show that 
$\wp(U)^\DOWN = \{ X^\DOWN \mid X \subseteq U \}$
and $\wp(U)^\UP = \{ X^\UP \mid X \subseteq U \}$ form
ortholattices and prove that these ortholattices are completely distributive if and only if
$R$ is induced by an irredundant covering of $U$. Note that distributive
ortholattices are Boolean lattices, and a Boolean lattice is atomistic
if and only if it is completely distributive. This means that
$\wp(U)^\DOWN$ and $\wp(U)^\UP$ are atomistic Boolean lattices exactly when $R$ 
is induced by an irredundant covering of $U$, and we describe the atoms of these lattices. 
In Section~\ref{Sec:OrderedSets}, we study the ordered set of rough sets 
$\mathit{RS}$ and show that it can be up to isomorphism identified with a set of pairs 
$ \{ (\mathcal{I}(X),\mathcal{C}(X)) \mid X \subseteq U \}$, where
$\mathcal{I}$ and $\mathcal{C}$ are interior and closure operators
on the set $U$ satisfying certain conditions.  We prove that $\mathit{RS}$ is a
complete lattice if and only if it is a complete subdirect
product of $\wp(U)^\DOWN$ and $\wp(U)^\UP$. We also show that $\mathit{RS}$ is an 
algebraic completely distributive lattice if and only if $R$ is induced by an irredundant 
covering of $U$, and in such a case, on $\mathit{RS}$ a quasi-Nelson algebra can be defined. 
The section ends with necessary and sufficient conditions which guarantee that
for a tolerance $R$ on $U$, the ordered set $\mathit{RS}_X$ is a lattice for all 
$X \subseteq U$, where $R_X$ denotes the restriction of $R$ to the set 
$X$ and $\mathit{RS}_X$ is the set of all rough sets determined by $R_X$. Finally,
Section~\ref{Sec:DisjointRepresentations} is devoted to the disjoint representation 
and the formal concept representation of rough sets. In particular, we prove that
these representations are Dedekind--MacNeille completions of $\mathit{RS}$.

\section{Preliminaries: Rough approximation operators, Galois connections, and formal concepts}
\label{Sec:Preliminaries}

First we recall from \cite{Jarv07} some notation and basic properties of rough approximation operators
defined by arbitrary binary relations. Let $R$ be a binary relation on the set $U$. 
For any $X \subseteq U$,
we denote $$R(X) = \{ y \in U \mid x \, R \,y \text{ for some $x \in X$} \}.$$
For the singleton sets, $R(\{x\})$ is written simply as $R(x)$, that is,
$R(x) = \{ y \in U \mid x \, R \, y\}$. It is clear that 
$R(X) = \bigcup_{x\in X} R(x)$ for all $X \subseteq U$. 
The \emph{lower approximation} of a set $X \subseteq U$ is 
\[
 X^\DOWN = \{ x \mid R(x) \subseteq X\}
\]
and
$X$'s \emph{upper approximation} is 
\[
 X^\UP = \{ x \mid R(x) \cap X \neq \emptyset \}.
\]

Let $\wp(U)$ denote the \emph{power set} of $U$. It is a complete Boolean lattice
with respect to the set-inclusion order. The
map $^\UP$ is a complete join-homomorphism on $\wp(U)$, that is, it
preserves all unions:
\[
  \big ( \bigcup_{X \in \mathcal{H}} X \big )^\UP = \bigcup_{X \in \mathcal{H}} X^\UP.
\]
Analogously, $^\DOWN$ is a complete meet-homomorphism on $\wp(U)$ preserving all intersections:
\[
\big ( \bigcap_{X \in \mathcal{H}} X \big )^\DOWN = \bigcap_{X \in \mathcal{H}} X^\DOWN.
\]
Hence, the approximation operators are order-preserving, that is, $X \subseteq Y$ implies
$X^\DOWN \subseteq Y^\DOWN$ and $X^\UP \subseteq Y^\UP$.
In addition, approximation operators are dual, meaning that for all $X \subseteq U$, $X^{c\UP} = X^{\DOWN c}$
and $X^{c\DOWN} = X^{\UP c}$, where $X^c$ denotes the set-theoretical \emph{complement} $U \setminus X$ of $X$.
By the above, the set
\[
  \wp(U)^\DOWN = \{ X^\DOWN \mid X \subseteq U\}
\]
is a \emph{closure system}, that is, it is closed under arbitrary intersections. Similarly,
\[ 
\wp(U)^\UP = \{ X^\UP \mid X \subseteq U\} \
\]
forms an interior system, that is, it is closed under any union.
The complete lattices $ \wp(U)^\DOWN$ and $\wp(U)^\UP$ are with respect 
to the set-inclusion relation dually 
order-isomorphic by the map $X^\DOWN \mapsto X^{\DOWN c} = X^{c \UP}$.

For two ordered sets $P$ and $Q$, a pair 
$(f,g)$ of maps $f \colon P \to Q$ and $g \colon Q \to P$ 
is called a  \emph{Galois connection\/}  between $P$ and $Q$
if for all $p \in P$ and $q \in Q$,
\[
f(p) \leq q \iff p \leq g(q).
\]
In the next lemma are listed some of the known properties of Galois connections.

\begin{lemma}\label{Lem:GaloisProperty}
Let $(f,g)$ be a Galois connection between two ordered sets $P$ and $Q$.
\begin{enumerate}[\rm (a)] 
\item The composition $f \circ g \circ f$ equals $f$ and the composition 
$g \circ f \circ g$ equals $g$.

\item The composition $g \circ f$ is a lattice-theoretical closure operator on $P$ and the set
  of $g \circ f$-closed elements is $g(Q)$, that is, $(g \circ f)(P) = g(Q)$.

\item The composition $f \circ g$ is a lattice-theoretical interior operator on $Q$ and 
  the set of $f \circ g$-closed elements is $f(P)$, that is,
 that is, $(f \circ g)(Q) = f(P)$.

\item The image sets $f(P)$ and $g(Q)$ are order-isomorphic.

\item The map $f$ is a complete join-homomorphism and $g$ is a complete meet-homomorphism.

\item The maps $f$ and $g$ uniquely determine each other by the equations
     \[ f(p) = \bigwedge \{ q \in Q \mid p \leq g(q)\} \mbox{ \ and \ }
     g(q) = \bigvee   \{ p \in P \mid f(p) \leq q\} .\] 
\end{enumerate}
\end{lemma}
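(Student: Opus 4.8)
The plan is to derive all six items from two elementary consequences of the defining equivalence $f(p)\le q \iff p\le g(q)$, together with the order-preservation of $f$ and $g$. First I would record that putting $q=f(p)$ gives $p\le (g\circ f)(p)$ for all $p\in P$, and putting $p=g(q)$ gives $(f\circ g)(q)\le q$ for all $q\in Q$; monotonicity of $f$ then follows, since $p_1\le p_2\le (g\circ f)(p_2)$ implies $f(p_1)\le f(p_2)$, and dually for $g$. Everything else is formal manipulation of these facts.

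For (a) I would sandwich: applying $f$ to $p\le (g\circ f)(p)$ gives $f(p)\le (f\circ g\circ f)(p)$, while $(f\circ g)(q)\le q$ taken at $q=f(p)$ gives $(f\circ g\circ f)(p)\le f(p)$, so $f\circ g\circ f=f$; the identity $g\circ f\circ g=g$ is symmetric. For (b), the map $g\circ f$ is extensive and order-preserving by the above and idempotent because $(g\circ f)\circ(g\circ f)=g\circ(f\circ g\circ f)=g\circ f$ using (a), hence it is a closure operator; its closed elements are exactly $g(Q)$, since $(g\circ f)(p)=p$ forces $p=g(f(p))\in g(Q)$ and, conversely, $(g\circ f)(g(q))=(g\circ f\circ g)(q)=g(q)$ by (a). Item (c) is the order dual of (b). For (d) I would observe that $f$ and $g$ restrict to mutually inverse order-preserving maps between $(g\circ f)(P)=g(Q)$ and $(f\circ g)(Q)=f(P)$ — on $g(Q)$ the composite $g\circ f$ acts as the identity, and on $f(P)$ the composite $f\circ g$ acts as the identity — so these two ordered sets are order-isomorphic.

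For (e), given $S\subseteq P$ for which $\bigvee S$ exists, $f(\bigvee S)$ is an upper bound of $f(S)$ by monotonicity, and if $q$ is any upper bound of $f(S)$ then $s\le g(q)$ for every $s\in S$, whence $\bigvee S\le g(q)$ and $f(\bigvee S)\le q$; thus $f(\bigvee S)=\bigvee f(S)$, and the claim for $g$ is dual. For (f), the defining equivalence yields $\{q\in Q\mid p\le g(q)\}=\{q\in Q\mid f(p)\le q\}$, which is a set with least element $f(p)$, so its meet equals $f(p)$; dually $g(q)=\bigvee\{p\in P\mid f(p)\le q\}$, and these two formulas exhibit $f$ and $g$ as determining each other.

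I do not expect a genuine obstacle here, as the argument is purely order-theoretic. The one place that calls for a little care is that the lemma is phrased for arbitrary ordered sets, so in (e) I would avoid appealing to completeness and instead verify directly, as above, that $\bigvee f(S)$ exists and equals $f(\bigvee S)$ whenever $\bigvee S$ exists. In the applications of this lemma later in the paper, $P$ and $Q$ are the complete Boolean lattice $\wp(U)$, so even this subtlety does not arise there.
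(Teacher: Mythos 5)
The paper states this lemma without proof, treating it as a collection of well-known facts about Galois connections, so there is no argument in the text to compare against. Your derivation is the standard one and is correct throughout: the two basic consequences $p\le (g\circ f)(p)$ and $(f\circ g)(q)\le q$ plus monotonicity do indeed yield (a)--(f) by the manipulations you describe, and your care in item (e) to prove $f(\bigvee S)=\bigvee f(S)$ directly from the adjunction (rather than assuming completeness of $P$ and $Q$) is exactly the right way to handle the fact that the lemma is stated for arbitrary ordered sets.
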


In addition, if the maps $f \colon P \to Q$ and $g \colon Q \to P$ between two complete
lattices $P$ and $Q$ form a Galois connection $(f,g)$, then $f(P)$ is a complete lattice such that for all
$S \subseteq f(P)$,
\[ \textstyle
\bigvee S = \bigvee_Q S 
\ \mbox{ and } \
\bigwedge S = f \big ( g \big (\bigwedge_Q S \big ) \big) = 
f \big ( \bigwedge_P g \big ( S \big ) \big),
\]
and $g(Q)$ is a complete lattice such that for all $S \subseteq g(Q)$,
\[ \textstyle
\bigvee S = g \big ( f \big (\bigvee_P S \big ) \big) = 
g \big ( \bigvee_Q f \big ( S \big ) \big)
\ \mbox{ and } \
\bigwedge S = \bigwedge_P S.
\]

An \emph{orthocomplementation} on a bounded lattice is a function that maps each element 
$x$ to an \emph{orthocomplement} $x^\bot$ in such a way that the following axioms hold:
\begin{enumerate}[({O}1)]
 \item $x \leq y$ implies $y^\bot \leq x^\bot$;
 \item $x^{\bot \bot} = x$;
 \item $x \vee x^\bot = 1$ and $x \wedge x^\bot = 0$.
\end{enumerate}
An \emph{ortholattice} is a bounded lattice equipped with an orthocomplementation. 
Ortholattices are self-dual by the map $^\bot$. Note that if an ortholattice
is distributive, then it is a Boolean lattice such that
the complement of the element $x$ is  $x^\bot$.

Let $(f,g)$ be a Galois connection on a Boolean lattice $(B,\vee,\wedge,{^c},0,1)$.
We may define the maps $^\bot \colon f(B) \to f(B)$ and $^\top \colon g(B) \to g(B)$
by setting 
\[
f(x)^\bot = f(f(x)^c) \text{ \ and \ } g(x)^{\top} = g(g(x)^c).
\]
The maps $^\bot$ and $^\top$ satisfy (O1), and if $f$ and $g$ are dual, that is, 
$f(x)^c = g(x^c)$ for all $x \in B$, then $^\bot$ and $^\top$ satisfy (O2).
Additionally, if $f$ is extensive, that is, $x \leq f(x)$ for all $x \in B$, then $^\bot$
and $^\top$ satisfy (O3). These observations are summarized in the following
well-known lemma.

\begin{lemma}\label{Lem:Ortho}
If $(f,g)$ is a Galois connection on a Boolean lattice $B$ such
that $f$ and $g$ are dual, and  $f$ is extensive, then $f(B)$ and $g(B)$
are ortholattices.
\end{lemma}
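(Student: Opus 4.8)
The plan is to check that the map $^\bot\colon f(B)\to f(B)$, $a\mapsto a^\bot = f(a^c)$, is an orthocomplementation on the bounded lattice $f(B)$ by verifying (O1)--(O3), and then to run the formally dual argument for $^\top\colon g(B)\to g(B)$, $b\mapsto b^\top = g(b^c)$. First I would fix the ambient structure. By the remark following Lemma~\ref{Lem:GaloisProperty}, $f(B)$ and $g(B)$ are complete lattices; by Lemma~\ref{Lem:GaloisProperty}(c), $f(B)=(f\circ g)(B)$ is the set of open elements of the interior operator $f\circ g$, so joins in $f(B)$ are computed as in $B$ (the image of the complete join-homomorphism $f$ is closed under arbitrary joins), while meets in $f(B)$ are $(f\circ g)$ applied to the meet in $B$; dually $g(B)=(g\circ f)(B)$ is a closure system, with meets as in $B$. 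Since $f$ preserves the empty join, $f(0)=0\in f(B)$ is the least element of $f(B)$; since $f$ is extensive, $1\le f(1)$, so $f(1)=1\in f(B)$ is the greatest. Using duality in the form $g(x)=f(x^c)^c$ (equivalently $f(x)^c=g(x^c)$), one gets $g(0)=f(1)^c=0\in g(B)$ and $g(1)=1\in g(B)$ as well, and also the reformulations $a^\bot=f(a^c)=g(a)^c$ for $a\in f(B)$ and $b^\top=g(b^c)=f(b)^c$ for $b\in g(B)$.

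The axioms then come out quickly. Axiom (O1): if $a\le b$ in $f(B)$, then $b^c\le a^c$ in the Boolean lattice $B$, and $f$ is order-preserving, so $b^\bot=f(b^c)\le f(a^c)=a^\bot$. Axiom (O2): for $a\in f(B)$, applying duality twice,
\[
a^{\bot\bot}=f\big(f(a^c)^c\big)=f\big(g(a^{cc})\big)=f\big(g(a)\big)=(f\circ g)(a)=a,
\]
the last equality because $a$ lies in the image of the interior operator $f\circ g$ (equivalently, $f\circ g\circ f=f$ by Lemma~\ref{Lem:GaloisProperty}(a)). Hence $^\bot$ is an order-reversing involution of the lattice $f(B)$, and therefore a dual automorphism of it, interchanging $\vee$ and $\wedge$. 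Axiom (O3): extensivity of $f$ gives $a^c\le f(a^c)=a^\bot$, so in $B$---and hence in $f(B)$, where joins agree with those of $B$---we have $a\vee a^\bot\ge a\vee a^c=1$, i.e.\ $a\vee a^\bot=1$. Applying the dual automorphism $^\bot$ and using (O2), $a\wedge a^\bot=a^{\bot\bot}\wedge a^\bot=(a\vee a^\bot)^\bot=1^\bot=f(0)=0$. Thus $f(B)$ is an ortholattice.

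For $g(B)$ the proof is formally dual, but the directly verifiable half of (O3) switches sides: since $g(B)$ is a closure system, its meets are those of $B$, and extensivity of $f$ gives $b\le f(b)$, hence $b^\top=f(b)^c\le b^c$, so $b\wedge b^\top\le b\wedge b^c=0$ and thus $b\wedge b^\top=0$ in $g(B)$; then $b\vee b^\top=1$ follows since $^\top$ is again an order-reversing involution, hence a dual automorphism, of $g(B)$. (One could instead transport the orthocomplementation along the order-isomorphism $f(B)\cong g(B)$ of Lemma~\ref{Lem:GaloisProperty}(d) and check it reproduces $^\top$.) The only point requiring care is precisely this mismatch of operations: $f(B)$ inherits $B$'s joins but not its meets, and $g(B)$ inherits $B$'s meets but not its joins; that is why I verify the join part of (O3) directly in $f(B)$ and the meet part directly in $g(B)$, obtaining the remaining half each time from the fact that any order-reversing involution of a lattice is automatically a dual automorphism. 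Everything else is a routine computation with duality, extensivity, and the Galois identities $f\circ g\circ f=f$ and $g\circ f\circ g=g$.
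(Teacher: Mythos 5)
Your proposal is correct and follows the same route the paper intends: it verifies (O1)--(O3) for the maps $a\mapsto f(a^c)$ on $f(B)$ and $b\mapsto g(b^c)$ on $g(B)$, using duality for (O2) and extensivity for (O3), exactly as sketched in the paragraph preceding the lemma. You supply the details the paper leaves implicit (in particular that $f(B)$ inherits joins and $g(B)$ inherits meets from $B$, and that an order-reversing involution is a dual automorphism), and these details are all accurate.
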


We end this section by presenting some terminology concerning formal concepts from 
\cite{ganter1999formal}. A \emph{formal context} $\mathbb{K} = (G,M,I)$ consists of two
sets $G$ and $M$ and a relation $I$ from $G$ to $M$. The elements of $G$ are 
called the \emph{objects} and the elements of $M$ are called \emph{attributes} of
the context. We write $g \, I \, m$\, or\, $(g,m) \in I$ to mean that the object $g$ has the
attribute $m$. For $A \subseteq G$ and $B \subseteq M$, we define
\[ A' = \{ m \in M \mid g \, I \, m \text{ for all } g \in A\} 
\text{ \ and \ } B' = \{ g \in G \mid g \, I \, m \text{ for all } m \in B\}.
\]
A \emph{formal concept} of the context $(G,M,I)$ is a pair $(A,B)$ with
$A \subseteq G$, $B \subseteq M$, $A' = B$, and $B' = A$. 
We call $A$ the \emph{extent} and $B$ the \emph{intent} of the
concept $(A,B)$. It is easy to see that $(A,B) \in \wp(G) \times \wp(M)$
is a concept if and only if $(A,B) = (A'',A') = (B',B'')$.
The set of all concepts of the context $\mathbb{K} = (G,M,I)$ is denoted 
by $\mathfrak{B}(\mathbb{K})$. 
The set $\mathfrak{B}(\mathbb{K})$ is ordered by
\begin{equation} \label{Eq:ConceptOrder}
 (A_1,B_1) \leq (A_2,B_2) \iff A_1 \subseteq A_2 \iff B_1 \supseteq B_2.
\end{equation}
With respect to this order, $\mathfrak{B}(\mathbb{K})$ forms a complete lattice,
called the \emph{concept lattice} of the context $\mathbb{K}$, in which 
\[
 \bigvee_{j \in J} (A_j,B_j) = \Big ( \big ( \bigcup_{j \in J} A_j \big )'', \bigcap_{j \in J}B_j \Big ) 
\text{ \ and \ }
 \bigwedge_{j \in J} (A_j,B_j) = \Big ( \bigcap_{j \in J}A_j , \big ( \bigcup_{j \in J} B_j \big )'' \Big ).
\]

\section{Approximation operations defined by tolerances}
\label{Sec:ToleranceApproximations}

In this section, we are recalling from \cite{Jarv99,Jarv07} the characteristic properties of 
rough sets approximation operators defined by \emph{tolerance relations}, which are reflexive 
and symmetric binary  relations. Also some new results are presented. It is known that the relation
$R$ is reflexive if and only if $X^\DOWN \subseteq X \subseteq X^\UP$ for all $X \subseteq U$.
Similarly, $R$ is symmetric if and only if the pair $({^\UP}, {^\DOWN})$ is a Galois
connection on $\wp(U)$. In the rest of this section, we assume that $R$ is a tolerance on 
$U$. Note that for all $X \subseteq U$, we have $X^\UP = R(X) = \bigcup_{x \in X} R(x)$. 

\begin{proposition}\label{Prop:GaloisCharacterization}
Let $(F,G)$ be a Galois connection on the complete lattice $\wp(U)$. Then,
there exists a tolerance $R$ on $U$ such that $F$ equals ${^\UP}$ and
$G$ equals ${^\DOWN}$ if and only if the following conditions hold for
all $x,y \in U$:
\begin{enumerate}[\rm (i)]
 \item $x \in F(\{x\})$;
 \item $x \in F(\{y\})$ implies $y \in F(\{x\})$.
\end{enumerate}
\end{proposition}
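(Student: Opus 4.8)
The plan is to prove both directions by translating between the relation $R$ and the map $F$ via singletons. For the forward direction, suppose $F = {^\UP}$ and $G = {^\DOWN}$ for a tolerance $R$. Since $R$ is reflexive, $x \in R(x)$, and because $F(\{x\}) = \{x\}^\UP = R(\{x\}) = R(x)$, we get $x \in F(\{x\})$, giving (i). For (ii), suppose $x \in F(\{y\}) = R(y)$, i.e. $y \mathrel{R} x$; by symmetry of $R$, $x \mathrel{R} y$, hence $y \in R(x) = F(\{x\})$, which is (ii).

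For the converse, assume $(F,G)$ is a Galois connection on $\wp(U)$ satisfying (i) and (ii). First I would use Lemma~\ref{Lem:GaloisProperty}(e): $F$ is a complete join-homomorphism, so $F(X) = F\big(\bigcup_{x \in X}\{x\}\big) = \bigcup_{x \in X} F(\{x\})$ for every $X \subseteq U$; in particular $F$ is determined by its values on singletons. Define the relation $R$ on $U$ by $x \mathrel{R} y \iff y \in F(\{x\})$. Then $R$ is reflexive by (i) and symmetric by (ii), so $R$ is a tolerance, and by construction $R(x) = F(\{x\})$ for all $x$. Consequently, for every $X \subseteq U$,
\[
X^\UP = R(X) = \bigcup_{x \in X} R(x) = \bigcup_{x \in X} F(\{x\}) = F(X),
\]
so $F = {^\UP}$. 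It remains to identify $G$ with ${^\DOWN}$. Since $R$ is symmetric, the pair $({^\UP},{^\DOWN})$ is a Galois connection on $\wp(U)$ (as recalled at the start of the section); thus $F = {^\UP}$ has ${^\DOWN}$ as its Galois adjoint. On the other hand $G$ is the Galois adjoint of $F$ by hypothesis. By Lemma~\ref{Lem:GaloisProperty}(f), the adjoint of a map in a Galois connection is uniquely determined by that map (via $G(q) = \bigvee\{p \mid F(p) \leq q\}$), so $G = {^\DOWN}$, completing the proof.

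The only mild subtlety — and the step I would be most careful about — is the last one: verifying that the $G$ coming with $(F,G)$ really coincides with the ${^\DOWN}$ attached to the relation $R$ we built, rather than just asserting it. This is handled cleanly by the uniqueness of Galois adjoints in Lemma~\ref{Lem:GaloisProperty}(f) together with the standard fact that symmetry of $R$ makes $({^\UP},{^\DOWN})$ a Galois connection; once $F = {^\UP}$ is established, both $G$ and ${^\DOWN}$ are forced to be the same map. Everything else is a routine unwinding of the definitions of $R(x)$, reflexivity, and symmetry.
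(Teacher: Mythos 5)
Your proof is correct and follows essentially the same route as the paper: the forward direction unwinds reflexivity and symmetry, and the converse defines $R$ via $R(x)=F(\{x\})$ (your definition of $R$ is the converse of the paper's, but condition (ii) makes them coincide), uses the complete join-homomorphism property to get $F={^\UP}$, and invokes the uniqueness of Galois adjoints from Lemma~\ref{Lem:GaloisProperty}(f) to force $G={^\DOWN}$. The extra care you take at the last step is exactly the argument the paper compresses into one sentence.
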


\begin{proof}
($\Rightarrow$)\, Suppose that $F$ equals ${^\UP}$ and $G$ equals ${^\DOWN}$ for some tolerance $R$.
Then, condition (i) means that $x \in R(x)$ for all $x \in X$ and (ii) is equivalent
to that $x \in R(y)$ implies $y \in R(x)$ for any $x,y \in U$. These conditions are obviously 
satisfied, because $R$ is a tolerance.

($\Leftarrow$)\, Let us define a binary relation $R$ by setting $x \, R \, y$ if and only
if $x \in F(\{y\})$. Because $F$ satisfies (i) and (ii), and  $R(x) = F(\{x\})$ for all $x \in U$, 
the relation $R$ is a tolerance. In addition,
\[ X^\UP = \bigcup_{x \in X} R(x) = \bigcup_{x \in X} F(\{x\}) = F \big ( \bigcup_{x \in X} \{x\} \big )
         = F(X),
\]
because $F$ is a complete join-homomorphism. Since the pairs of maps forming Galois connections are
unique by Lemma~\ref{Lem:GaloisProperty}(f), we have that $G$ must equal $^\DOWN$.
\end{proof}

Because  $({^\UP}, {^\DOWN})$ is a Galois connection on $\wp(U)$, the approximation operators 
have all the properties listed in Lemma~\ref{Lem:GaloisProperty}. In particular, the map 
$X \mapsto X^{\UP\DOWN}$
is the closure operator corresponding to the closure system $\wp(U)^\DOWN$,
which forms a complete lattice with respect to the order $\subseteq$ such that
\begin{equation}\label{Eq:DownLattice}
\bigvee_{X \in \mathcal{H}} X^\DOWN = \big ( \bigcup_{X \in \mathcal{H}} X^\DOWN \big )^{\UP\DOWN}
\quad \mbox{ and  } \quad 
\bigwedge_{X \in \mathcal{H}} X^\DOWN = \bigcap_{X \in \mathcal{H}} X^\DOWN  
\end{equation}
for all $\mathcal{H} \subseteq \wp(U)$. In addition,
$\wp(U)^\DOWN = \{ X^{\UP\DOWN} \mid X \subseteq U \}$.

Analogously, the map $X \mapsto X^{\DOWN\UP}$
is the interior operator that corresponds the interior system $\wp(U)^\UP$, 
which is a complete lattice such that
\begin{equation}\label{Eq:UpLattice}
\bigvee_{X \in \mathcal{H}} X^\UP = \bigcup_{X \in \mathcal{H}} X^\UP 
\quad \mbox{ and  } \quad 
\bigwedge_{X \in \mathcal{H}} X^\UP 
=  \big (\bigcap_{X \in \mathcal{H}} X^\UP \big )^{\DOWN \UP}
\end{equation}
for all $\mathcal{H} \subseteq \wp(U)$. The closure system $\wp(U)^\UP$ can be
also written in the form
$\{ X^{\DOWN\UP} \mid X \subseteq U \}$. 

By Lemma~\ref{Lem:Ortho}, $\wp(U)^\UP$ and $\wp(U)^\DOWN$ are ortholattices.
In $\wp(U)^\UP$, the orthocomplementation is $^\bot \colon X^\UP \mapsto X^{\UP c \UP}$,
and the map $^\top \colon X^\DOWN \mapsto X^{\DOWN c \DOWN}$ is the
orthocomplementation operation of $\wp(U)^\DOWN$. \label{Def:Ortho}
Hence, $\wp(U)^\UP$ and $\wp(U)^\DOWN$ are self-dual, and 
\[ (\wp(U)^\UP,\subseteq) \cong (\wp(U)^\UP,\supseteq) \cong(\wp(U)^\DOWN,\subseteq) \cong(\wp(U)^\DOWN,\supseteq). \]

Next we study the relationship between the lattices of approximations 
and concept lattices. For a tolerance $R$ on a set $U$, we consider the context 
$\mathbb{K} = (U,U,R^c)$, whose concept lattice is
$\mathfrak{B}(\mathbb{K}) = \{ (X'',X') \mid X \subseteq U\}$.
For $X \subseteq U$,
\begin{align*} 
X' = \{ x \in U \mid y \, R^c \, x \text{ for all } y \in X\} 
   = \{ x \in U \mid (x,y) \notin R \text{ for all } y \in X\} 
    = X^{\UP c}.
\end{align*}
Thus, $X^\UP = X^{\prime \, c}$ and $X^\DOWN = X^{c \UP c} = X^{c \, \prime}$.
In addition, $X'' = X^{\UP\DOWN}$ and hence
\[ 
  \mathfrak{B}(\mathbb{K}) = \{ (X^{\UP\DOWN},X^{c \DOWN}) \mid X \subseteq U\}.
\]
If $(A,B) \in \mathfrak{B}(\mathbb{K})$, then $A,B \in \wp(U)^\DOWN$ such that $A = B'$ and
$B = A'$. For any $A \in \wp(U)^\DOWN$, $A' = A^{\UP c} = A^{c \DOWN} = A^\top$, 
where $^\top$ is the orthocomplement defined in $\wp(U)^\DOWN$.
Thus, $(A,B) = (A,A^\top) = (B^\top,B)$. On the other hand, if $A \in \wp(U)^\DOWN$,
then $A = A^{\UP \DOWN}$ and $(A,A^\top)$ belongs to $\in \mathfrak{B}(\mathbb{K})$. Hence,
\[ 
  \mathfrak{B}(\mathbb{K}) = \{ (A,A^\top) \mid A \in \wp(U)^\DOWN \}.
\]
Notice that in the literature can be found studies in which notions of formal concept
analysis are applied to rough set theory.
Particularly,  Y.~Y.~Yao considers in \cite{yao2004concept} so-called 
``complement contexts'', which actually lead us to study the contexts of the
form $(U,U,R^c)$.

Let $\wp(U)^{\DOWN \mathrm{op}}$ denote the dual of the lattice $\wp(U)^{\DOWN}$, that is, 
$(\wp(U)^\DOWN,\supseteq)$.

\begin{proposition} \label{Prop:ConceptIsom}
Let $R$ be a tolerance on a set $U$ and\/ $\mathbb{K} = (U,U,R^c)$.
\begin{enumerate}[\rm (a)]
\item The complete lattices $\wp(U)^\UP$, $\wp(U)^\DOWN$, and $\mathfrak{B}(\mathbb{K})$ are isomorphic.
\item The concept lattice $\mathfrak{B}(\mathbb{K})$ is a complete sublattice of 
 $\wp(U)^\DOWN \times \wp(U)^{\DOWN \mathrm{op}}$.
\end{enumerate}
\end{proposition}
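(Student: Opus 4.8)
The plan is to build everything on the identifications already established in the excerpt, namely $\mathfrak{B}(\mathbb{K}) = \{(A,A^\top) \mid A \in \wp(U)^\DOWN\}$, the dual order-isomorphism $X^\DOWN \mapsto X^{\DOWN c} = X^{c\UP}$ between $(\wp(U)^\DOWN,\subseteq)$ and $(\wp(U)^\UP,\subseteq)$ (equivalently the self-duality chain displayed after Lemma~\ref{Lem:Ortho}'s application), and the concrete description of joins and meets in concept lattices from Section~\ref{Sec:Preliminaries}. For part (a), I would exhibit the map $\varphi \colon \wp(U)^\DOWN \to \mathfrak{B}(\mathbb{K})$, $A \mapsto (A,A^\top)$. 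Surjectivity is immediate from the displayed description of $\mathfrak{B}(\mathbb{K})$; injectivity is clear since $A$ is the first coordinate; and order-preservation in both directions follows because $A_1 \subseteq A_2$ iff $(A_1,A_1^\top) \leq (A_2,A_2^\top)$ in the concept order \eqref{Eq:ConceptOrder} — here one uses that $^\top$ is order-reversing (axiom (O1) for the orthocomplementation on $\wp(U)^\DOWN$), so $A_1 \subseteq A_2$ forces $A_1^\top \supseteq A_2^\top$, which is exactly the condition on intents. Composing with the already-known isomorphism $\wp(U)^\UP \cong \wp(U)^\DOWN$ gives all three lattices isomorphic.

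For part (b), the content is that $\mathfrak{B}(\mathbb{K})$, sitting inside $\wp(U)^\DOWN \times \wp(U)^{\DOWN\mathrm{op}}$ via $A \mapsto (A, A^\top)$ (note the second coordinate $A^\top \in \wp(U)^\DOWN$ is being read with the reversed order, which matches the intent-order $\supseteq$ in \eqref{Eq:ConceptOrder}), is closed under arbitrary joins and meets computed in the product. I would verify this directly from the join/meet formulas for concept lattices recalled at the end of Section~\ref{Sec:Preliminaries}: for a family $(A_j,A_j^\top)$,
\[
\bigvee_{j} (A_j, A_j^\top) = \Big( \big(\bigcup_j A_j\big)^{\UP\DOWN},\ \bigcap_j A_j^\top \Big),
\qquad
\bigwedge_{j} (A_j, A_j^\top) = \Big( \bigcap_j A_j,\ \big(\bigcup_j A_j^\top\big)^{\DOWN\UP\,?}\Big),
\]
and I would check that the first coordinate of the join is the join $\bigvee A_j$ computed in $\wp(U)^\DOWN$ (which by \eqref{Eq:DownLattice} is $(\bigcup_j A_j)^{\UP\DOWN}$) and the second coordinate is the meet $\bigcap_j A_j^\top$ computed in $\wp(U)^\DOWN$, i.e. the join in $\wp(U)^{\DOWN\mathrm{op}}$ — so the join in $\mathfrak{B}(\mathbb{K})$ agrees coordinatewise with the join in the product. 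The meet is handled dually, using that $\bigwedge$ in $\wp(U)^\DOWN$ is $\bigcap$ and that the second coordinate must be $\big(\bigcup_j A_j^\top\big)^{\UP\DOWN}$, the join in $\wp(U)^\DOWN$. The key algebraic fact making the two coordinates consistent is the De Morgan-type compatibility of $^\top$ with joins and meets in $\wp(U)^\DOWN$, which is exactly part of the ortholattice structure (self-duality via $^\top$).

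The main obstacle — really the only delicate point — is bookkeeping with the two opposite orders: one must be careful that "complete sublattice of $\wp(U)^\DOWN \times \wp(U)^{\DOWN\mathrm{op}}$" means closure under the product's joins and meets, and that under the order-reversal on the second factor the concept order \eqref{Eq:ConceptOrder} (which has $B_1 \supseteq B_2$) becomes the coordinatewise order, so that $\varphi$ is not merely an embedding but a \emph{complete} sublattice embedding. Once the formulas above are lined up with \eqref{Eq:DownLattice}, there is no real computation left; I expect the whole argument to be short, essentially a translation of the concept-lattice join/meet formulas through the identification $\mathfrak{B}(\mathbb{K}) = \{(A,A^\top) \mid A \in \wp(U)^\DOWN\}$.
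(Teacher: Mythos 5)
Your proposal is correct and follows essentially the same route as the paper: part (a) via the map $A \mapsto (A,A^\top)$ composed with the known isomorphism $\wp(U)^\DOWN \cong \wp(U)^\UP$, and part (b) by matching the concept-lattice join/meet formulas against the coordinatewise operations of $\wp(U)^\DOWN \times \wp(U)^{\DOWN\mathrm{op}}$ using \eqref{Eq:DownLattice}. The only blemish is the hedged exponent $^{\DOWN\UP\,?}$ in your displayed meet formula --- since the intents here also lie in $\wp(U)^\DOWN$ and $X''=X^{\UP\DOWN}$, the correct closure is $\big(\bigcup_j A_j^\top\big)^{\UP\DOWN}$, which is exactly what your subsequent prose states.
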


\begin{proof}
(a) It is obvious that the map $A \mapsto (A,A^\top)$ is an isomorphism between
$\wp(U)^\DOWN$ and $\mathfrak{B}(\mathbb{K})$, and we have already
noted that  $\wp(U)^\DOWN$ and  $\wp(U)^\UP$ are isomorphic.

(b) Clearly, $\mathfrak{B}(\mathbb{K}) \subseteq \wp(U)^\DOWN \times \wp(U)^\DOWN$. 
For all $\{A_j\}_{j \in J} \subseteq \wp(U)^\DOWN$, the join in $\wp(U)^{\DOWN}$
is $\bigvee_{j \in J} A_j = ( \bigcup_{j \in J} A_j )^{\UP\DOWN} =
 ( \bigcup_{j \in J} A_j )''$, and the meet in $\wp(U)^\DOWN$
is $\bigwedge_{j \in J} A_j = \bigcap_{j \in J} A_j$. Thus, the join in $\wp(U)^{\DOWN \mathrm{op}}$
is $\bigvee_{j \in J} A_j = \bigcap_{j \in J} A_j$. Therefore, for any
$\{(A_j,B_j)\}_{j \in J} \subseteq \mathfrak{B}(\mathbb{K})$, its join $\bigvee_{j \in J} (A_j,B_j)$
coincides in  $\mathfrak{B}(\mathbb{K})$ and $\wp(U)^\DOWN \times \wp(U)^{\DOWN \mathrm{op}}$.
An analogous observation can be done with respect to meets. Thus, $\mathfrak{B}(\mathbb{K})$
is a complete sublattice of $\wp(U)^\DOWN \times \wp(U)^{\DOWN \mathrm{op}}$.
\end{proof}

Note that Proposition~\ref{Prop:ConceptIsom} implies that for a tolerance $R$ and for the 
context $\mathbb{K} = (U,U,R^c)$, the concept lattice $\mathfrak{B}(\mathbb{K})$ is an 
ortholattice and the orthocomplement is obtained by swapping the sets (cf. \cite[p.~54]{ganter1999formal}),
in other words, for $(A,A^\top) \in \mathfrak{B}(\mathbb{K})$, its orthocomplement is $(A^\top, A)$. 

Now we may present a characterization of complete ortholattices in terms of rough sets
operators defined by tolerances.

\begin{proposition} \label{Prop:OrthoCharacterization}
A complete lattice $L$ forms an ortholattice if and only if there exists a set $U$
and a tolerance $R$ on $U$ such that $L \cong \wp(U)^\DOWN \cong \wp(U)^\UP$.
\end{proposition}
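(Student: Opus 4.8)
The plan is to prove the two directions separately. The ``if'' direction is immediate: if $L \cong \wp(U)^\DOWN$ for some tolerance $R$ on $U$, then $L$ is an ortholattice because we have already established (via Lemma~\ref{Lem:Ortho}, using that $({^\UP},{^\DOWN})$ is a Galois connection on the Boolean lattice $\wp(U)$ with ${^\UP}$ dual and extensive) that $\wp(U)^\DOWN$ is an ortholattice; being an ortholattice is preserved under lattice isomorphism. The isomorphism $\wp(U)^\DOWN \cong \wp(U)^\UP$ was noted after \eqref{Eq:UpLattice} (the map $X^\DOWN \mapsto X^{\DOWN c}$), so nothing new is needed there.

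For the ``only if'' direction, suppose $L$ is an arbitrary complete ortholattice with orthocomplementation ${^\bot}$. The idea is to realize $L$ as a lattice of lower approximations by building a tolerance directly on the underlying set $U := L$. First I would recall the standard fact (the ``only if'' half of Lemma~\ref{Lem:Ortho} in reverse, so to speak) that a complete ortholattice is a complete lattice in which the canonical self-duality given by ${^\bot}$ holds. The natural candidate for the tolerance is to let $U = L$ and declare $x \mathbin{R} y \iff x \not\le y^\bot$. This relation is reflexive because $x \le x^\bot$ would force $x = x \wedge x^\bot = 0$, and even then $0 \mathbin{R} 0$ fails --- so one must instead be slightly more careful and either pass to $L \setminus \{0\}$ or check that the degenerate points do no harm; I expect the cleanest route is $x \mathbin{R} y \iff x \not\le y^\bot$ on all of $L$ and then verify reflexivity using $x \wedge x^\bot = 0$ together with the fact that $x \le x^\bot$ already gives $x \le x$, i.e.\ reflexivity needs $x \not\le x^\bot$, which fails exactly at $x=0$; hence one works on $U = L\setminus\{0\}$. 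Symmetry is immediate from (O1)--(O2): $x \le y^\bot \iff y = y^{\bot\bot} \le x^\bot$. Then I would compute, for $Y \subseteq U$, that $R(Y) = \{\,z \mid z \not\le y^\bot \text{ for some } y \in Y\,\}$ and hence $Y^\DOWN = \{\, x \mid R(x) \subseteq Y \,\}$; the goal is to show that the principal down-sets (or rather the ``orthogonal-closed'' sets) of $L$ are exactly the members of $\wp(U)^\DOWN$ and that the assignment $a \mapsto \{\, x \in U \mid x \le a\,\}$ is a lattice isomorphism from $L$ onto $\wp(U)^\DOWN$.

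The key computational steps, in order, are: (1) verify $R$ is a tolerance on $U = L \setminus\{0\}$; (2) show $R(x) = \{\, z \in U \mid z \not\le x^\bot \,\}$, so that $R(x)$ is the complement in $U$ of the principal ideal ${\downarrow} x^\bot$; (3) deduce $\{x\}^\UP = R(x)$ and, using that ${^\UP}$ is a complete join-homomorphism, that $Y^\UP = \bigcup_{y\in Y} R(y) = U \setminus {\downarrow}\big(\bigwedge_{y \in Y} y^\bot\big) = U \setminus {\downarrow}\big(\big(\bigvee_{y\in Y} y\big)^\bot\big)$, using De Morgan in the ortholattice $L$ (which holds because ${^\bot}$ is an order-reversing involution, hence turns arbitrary joins into arbitrary meets); (4) conclude $Y^\DOWN = Y^{c\UP c}$ equals ${\downarrow} a \cap U$ where $a = \big(\bigvee_{y \notin Y} y\big)^{\bot}$ ... more precisely, work out that $\wp(U)^\DOWN = \{\, {\downarrow} a \cap U \mid a \in L\,\}$; (5) check that $a \mapsto {\downarrow} a \cap U$ is injective (it is, since $L \setminus \{0\}$ is join-dense below each element in the sense that $a = \bigvee({\downarrow}a \cap U)$ — here one uses that $L$ is a complete lattice, and the bottom element $0 = \bigvee \emptyset$ is handled by $\emptyset = {\downarrow}0 \cap U$), order-preserving in both directions, and surjective onto $\wp(U)^\DOWN$, hence a lattice isomorphism $L \cong \wp(U)^\DOWN$; and finally invoke $\wp(U)^\DOWN \cong \wp(U)^\UP$ from the preliminaries.

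The main obstacle I anticipate is bookkeeping around the bottom element and, more substantively, confirming that $\wp(U)^\DOWN$ really is all of $\{\, {\downarrow}a \cap U \mid a \in L\,\}$ rather than some proper subfamily: this requires showing every principal ideal of $L$ arises as $Y^{\UP\DOWN}$ for a suitable $Y$, which in turn hinges on the De Morgan identity $\big(\bigvee_i a_i\big)^\bot = \bigwedge_i a_i^\bot$ in a \emph{complete} ortholattice and on join-density of $U = L\setminus\{0\}$. Once those two facts are in hand the isomorphism is essentially forced, since both the order on $\wp(U)^\DOWN$ and the order on $L$ are recovered by inclusion of principal ideals. It is worth remarking that this construction is a tolerance-theoretic analogue of the familiar representation of complete lattices via their principal ideals, with the orthocomplement supplying precisely the symmetry needed to make $R$ a tolerance and to make $\wp(U)^\DOWN$ carry the orthocomplementation $^\top$ matching $^\bot$ on $L$.
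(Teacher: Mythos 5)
Your proof is correct, but it takes a genuinely different route from the paper on the ``only if'' direction. The paper simply cites the known representation theorem (from Ganter--Wille) that every complete ortholattice is isomorphic to the concept lattice of a context $(U,U,I)$ with $I$ irreflexive and symmetric, and then sets $R=I^c$ and translates $A\mapsto A'$ into the approximation operators. You instead construct the tolerance explicitly: $U=L\setminus\{0\}$ and $x\,R\,y \iff x\not\leq y^\bot$, and you verify by hand that $R(x)=U\setminus{\downarrow}x^\bot$, that $Y^\UP=U\setminus{\downarrow}\big((\bigvee Y)^\bot\big)$ via the complete De~Morgan law for the order-reversing involution $^\bot$, and that $a\mapsto{\downarrow}a\cap U$ is a lattice isomorphism onto $\wp(U)^\DOWN=\{{\downarrow}a\cap U\mid a\in L\}$ (injectivity and surjectivity both resting on $a=\bigvee({\downarrow}a\cap U)$, with $a=0$ handled by the empty join). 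All of these steps check out, including your resolution of the reflexivity issue at $0$ by deleting the bottom element. In effect you have unwound the black box: your relation $R^c$ is exactly the orthogonality context that underlies the cited theorem, so the constructions coincide at heart; what your version buys is self-containedness (no appeal to the concept-lattice representation theorem) and, as a bonus, an explicit verification that the orthocomplementation $^\top$ on $\wp(U)^\DOWN$ corresponds to $^\bot$ on $L$, while the paper's version is shorter and makes the link to formal concept analysis that is exploited elsewhere in Section~\ref{Sec:ToleranceApproximations}.
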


\begin{proof}
 As noted, for a tolerance $R$ on $U$, the complete lattices  $\wp(U)^\DOWN \cong \wp(U)^\UP$
 are ortholattices. Conversely, it is known that if $L$ is a complete ortholattice, then
 there exists a context $\mathbb{K} = (U,U,I)$, where $I$ is an irreflexive and symmetric binary
 relation on $U$, such that $L \cong \mathfrak{B}(\mathbb{K})$; see
 \cite{ganter1999formal}. The maps $A \mapsto A'$ and $B \mapsto B'$ defined in this
 context form an order-reversing Galois connection between $(\wp(U),\subseteq)$ and $(\wp(U),\supseteq)$.
 Because $X \mapsto X^c$ is an order-isomorphism between  $(\wp(U),\supseteq)$ and $(\wp(U),\subseteq)$,
 the composite maps
 \[ 
 f \colon A \mapsto A^{\prime c} \qquad \text{and} \qquad g \colon B \mapsto B^{c \prime}  
 \]
 form an order-preserving Galois-connection $(f,g)$ on $(\wp(U),\subseteq)$. If we set $R = I^c$,
 then $R$ is obviously a tolerance and $\mathbb{K} = (U,U,R^c)$. By our above observations 
 $f(A) = A^\UP$ and $g(A) = A^\DOWN$ for all $A \subseteq U$, and $L \cong  \mathfrak{B}(\mathbb{K})  \cong \wp(U)^\DOWN \cong \wp(U)^\UP$.
\end{proof}

The lattice $\wp(U)^\DOWN$ is not necessarily even modular; for instance, in Example~\ref{Ex:Counter}
(p.~\pageref{Ex:Counter}), we define a tolerance $R$ on the set $U = \{a,b,c,d,e\}$ such that
$\wp(U)^\DOWN = \{ \emptyset, \{a\}, \{c\}, \{e\}, \{a,b\}, \{a,e\}, \{d,e\}, \{a,b,c\}, \{c,d,e\}, U \}$.
Now, the set 
$\{ \emptyset, \{a\}, \{c\}, \{a,b\}, \{a,b,c\} \}$ forms a sublattice of $\wp(U)^\DOWN$
isomorphic to $\mathbf{N_5}$. 

A complete lattice $L$ is \emph{completely distributive} if for any doubly indexed
subset $\{x_{i,\,j}\}_{i \in I, \, j \in J}$ of $L$, we have
\[
\bigwedge_{i \in I} \Big ( \bigvee_{j \in J} x_{i,\,j} \Big ) = 
\bigvee_{ f \colon I \to J} \Big ( \bigwedge_{i \in I} x_{i, \, f(i) } \Big ), \]
that is, any meet of joins may be converted into the join of all
possible elements obtained by taking the meet over $i \in I$ of
elements $x_{i,\,k}$\/, where $k$ depends on $i$.

In \cite{ganter1999formal}, Theorem 40 presents the following condition equivalent to
the assertion that the concept lattice $\mathfrak{B}(\mathbb{K})$ is completely distributive:

\begin{itemize}
\item[($\dag$)] For every non-incident object-attribute pair $(g,m)\notin I$, there
exists an object $h\in G$ and an attribute $n\in M$ with $(g,n)\notin
I,(h,m)\notin I$, and $h\in k^{\prime\prime}$ for all $k\in G\setminus \{n\}^{\prime}$.
\end{itemize}

We know by Proposition~\ref{Prop:ConceptIsom} that for any tolerance $R$ on $U$, 
$\wp(U)^\UP$ and $\wp(U)^\DOWN$ are isomorphic
to the concept lattice of the context  $\mathbb{K} = (U,U,R^c)$. If $\mathbb{K}$
is identified with $(G,M,I)$, then for all $x,y \in U$, $(x,y) \notin I$ means that  
$x \, R \, y$ and $y \, R \, x$. 
Since $x'' = \{x\}^{\UP\DOWN} = R(x)^\DOWN$, $y \in x''$ means that $R(y) \subseteq R(x)$.
In addition, $U \setminus \{x\}' = \{x\}^{\prime \, c} = \{x\}^\UP = R(x)$.
Hence ($\dag$) is equivalent to the following condition:

\begin{itemize}
\item[($\ddag$)] For any $a \, R \, b$, there exist $c,d\in U$ with
$a \, R \, c$ and $b \, R \, d$ such that for all $k\in R(c)$, we have $R(d)\subseteq R(k)$.
\end{itemize}

In what follows, we are going to present some conditions equivalent to  ($\ddag$).
Let $R$ be a tolerance on $U$. A set $X\subseteq U$ is a \emph{preblock} of
$R$ if $a \, R \, b$ holds for all $a,b\in X$, that is, $X^{2} \subseteq R$, 
where $X^{2}$ means the Cartesian product $X \times X$. A \emph{block} of $R$ is a maximal
preblock $B$. It is well known that $B = \bigcap_{x\in B} R(x)$, and that any preblock 
is contained in some block of $R$  (see e.g. \cite{Shreider}). Hence, for any
$x,y \in U$, $x \, R \, y$ if and only if there exists a block $B$
such that $x,y \in B$.

Denoting the set of blocks of $R$ by
$\mathcal{B}(R)$, we obtain $R = \bigcup \{ B^{2} \mid B \in \mathcal{B}(R) \}$.
A collection $\mathcal{H} \subseteq \wp(U)$ of nonempty subsets of $U$ is called a 
\emph{covering} of $U$ if $\bigcup \mathcal{H} = U$. A covering $\mathcal{H}$ 
is \emph{irredundant} if $\mathcal{H} \setminus \{X\}$ is not a covering of $U$ for any 
$X\in \mathcal{H}$. Clearly, the blocks of any tolerance on form a covering, which is 
not in general irredundant. Conversely, for
any covering $\mathcal{H}$ of $U$, the relation 
$R_{\mathcal{H}}= \bigcup \{ X^{2} \mid X \in \mathcal{H\}}$ 
is tolerance on $U$, called the \emph{tolerance induced by $\mathcal{H}$}.
Note that $\mathcal{H} \subseteq \mathcal{B}(R_\mathcal{H})$ for any irredundant covering $\mathcal{H}$ and that 
this inclusion can be proper (see Example~\ref{Ex:Schreider}).

In \cite{Pomykala88}, J.~A.~Pomyka{\l}a presented the following definition of covering-based rough approximations.
Let $\mathcal{H} \subseteq U$ be a covering and denote $\mathcal{H}_x = \bigcup \{ B \in \mathcal{H} \mid x \in B\}$
for any $x \in U$. The approximations of any $X \subseteq U$ are defined by
\[ \underline{\mathcal{H}}(X) = \{ x \in U \mid \mathcal{H}_x \subseteq X \} \text{ \ and \ }
\overline{\mathcal{H}}(X) = \bigcup \{ B \in \mathcal{H} \mid B \cap X \ne \emptyset \}.
\]
If $R$ is a tolerance on $U$ and $\mathcal{H}$ is the family of blocks of $R$, that is, 
$\mathcal{H} = \mathcal{B}(R)$, then for all $X \subseteq U$, $\underline{\mathcal{H}}(X) = X^\DOWN$
and $\overline{\mathcal{H}}(X) = X^\UP$. Note also that $R(x) = \mathcal{H}_x$ for any $x \in U$ (see \cite{Jarv99} for details).

\begin{theorem} \label{Thm:DC}
Let $R$ be a tolerance on $U$. The following assertions are equivalent:
\begin{enumerate}[\rm (a)]
\item $R$ satisfies {\rm ($\ddag$)}.

\item For any $a \, R \, b$, there exists $d\in U$ with $R(d)\subseteq R(a)\cap R(b)$ such that for
all $x \, R \, d$, we have $R(d) \subseteq R(x)$.

\item For any $a \, R \, b$, there exists a block $B \in \mathcal{B}(R)$ and an element 
$d\in B$ such that $a,b \in R(d) = B$.

\item $R$  is a tolerance induced by an irredundant covering of $U$. 
\end{enumerate}
\end{theorem}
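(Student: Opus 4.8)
The plan is to prove the equivalences in the cyclic order (a) $\Rightarrow$ (b) $\Rightarrow$ (c) $\Rightarrow$ (d) $\Rightarrow$ (a), exploiting the correspondence between tolerances and their block coverings that was set up just before the statement. The key structural fact I will use repeatedly is that a block $B$ satisfies $B = \bigcap_{x \in B} R(x)$, that every preblock extends to a block, and that $R(d) = B$ for a block $B$ containing $d$ precisely when $R(d)$ is itself a preblock that is maximal — equivalently, when $R(d) \subseteq R(x)$ for every $x \in R(d)$. Indeed, if $R(d) \subseteq R(x)$ for all $x \in R(d)$, then for any $x, y \in R(d)$ we get $y \in R(d) \subseteq R(x)$, so $x \mathrel{R} y$; thus $R(d)$ is a preblock containing $d$, and since any block $B \ni d$ satisfies $B \subseteq R(d)$ (as $d \mathrel{R} b$ for all $b \in B$), maximality forces $B = R(d)$. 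I will isolate this observation as the technical bridge between the ``$R(d) \subseteq R(x)$ for all $x \mathrel{R} d$'' phrasing in (b) and the ``$R(d) = B$ for a block $B$'' phrasing in (c).

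For (a) $\Rightarrow$ (b): given $a \mathrel{R} b$, condition ($\ddag$) yields $c, d$ with $a \mathrel{R} c$, $b \mathrel{R} d$, and $R(d) \subseteq R(k)$ for all $k \in R(c)$. I need to upgrade this to $R(d) \subseteq R(a) \cap R(b)$ and to ``$R(d) \subseteq R(x)$ for all $x \mathrel{R} d$''. Since $a \mathrel{R} c$ means $a \in R(c)$, applying the hypothesis with $k = a$ gives $R(d) \subseteq R(a)$; in particular $b \in R(d) \subseteq R(a)$ forces $a \mathrel{R} b$ again but more usefully, I want to re-run ($\ddag$) or chase elements to also land inside $R(b)$. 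The natural move is: from $R(d) \subseteq R(a)$ and the symmetry of $R$, and because $b \mathrel{R} d$ gives $b \in R(d) \subseteq R(a)$; I then argue $d \in R(d)$ (reflexivity) lies in $R(c)$ as well — here I must check $d \in R(c)$, which follows because $b \mathrel{R} d$ and I can choose things so that $d$ is forced into $R(c)$, or else apply ($\ddag$) a second time starting from the pair $b \mathrel{R} d$. Getting the intersection $R(a) \cap R(b)$ cleanly is the fiddly part of this implication, and I expect to need one extra application of ($\ddag$) plus the ``$R(d) \subseteq R(k)$ for all $k \in R(c)$'' clause applied at $k = d$ to conclude that $R(d)$ is stable, i.e. minimal among the $R(x)$ it meets.

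For (b) $\Rightarrow$ (c): given $a \mathrel{R} b$, take the $d$ from (b). The condition ``$R(d) \subseteq R(x)$ for all $x \mathrel{R} d$'' is exactly the hypothesis of the bridge observation above, so $B := R(d)$ is a block, and $a, b \in R(d) \cap R(d) \subseteq R(a) \cap R(b)$ — wait, rather $a, b \in R(d)$ follows since $R(d) \subseteq R(a) \cap R(b)$ gives $d \in R(a)$ and $d \in R(b)$, hence $a, b \in R(d)$ by symmetry; so $a, b \in R(d) = B$, as required. For (c) $\Rightarrow$ (d): the collection $\mathcal{H} = \{\, R(d) \mid d \in U,\ R(d) \text{ is a block}\,\}$ covers $U$ because (c) applied to $a = b$ (using reflexivity $a \mathrel{R} a$) produces for each $a$ a block of the form $R(d)$ containing $a$; I then show $\mathcal{H}$ is irredundant, i.e. no member can be deleted. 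If $R(d) \in \mathcal{H}$ and $R(d) \subseteq \bigcup(\mathcal{H} \setminus \{R(d)\})$, pick the element $d$ itself: it lies in some $R(d') \in \mathcal{H}$ with $R(d') \ne R(d)$, but $d \in R(d')$ gives $R(d) \subseteq R(d')$ by the bridge observation applied to $d' $ (since $R(d')$ is a block and $R(d) \subseteq R(x)$ for all $x \in R(d)$... here I use that $d' \in R(d)$ too by symmetry, forcing $R(d') \subseteq R(d)$ as well), hence $R(d) = R(d')$, a contradiction; so $\mathcal{H}$ is an irredundant covering and $R = R_{\mathcal{H}}$ because every pair $a \mathrel{R} b$ sits inside a common block $R(d) \in \mathcal{H}$ by (c). Finally (d) $\Rightarrow$ (a): if $R = R_{\mathcal{H}}$ for an irredundant covering $\mathcal{H}$, then for $a \mathrel{R} b$ there is $H \in \mathcal{H}$ with $a, b \in H$, and irredundancy gives a point $d \in H$ not covered by any other member of $\mathcal{H}$, whence $\mathcal{H}_d = H$ and $R(d) = \mathcal{H}_d = H$; translating $a, b \in R(d)$ with $R(d) = H$ back into the language of ($\ddag$) by choosing $c$ appropriately (e.g. $c = d$, so $R(c) = H$ and for each $k \in R(c) = H$ we have $H \subseteq R(k)$ since $H$ is a preblock containing $k$ and... here the needed inclusion $R(d) = H \subseteq R(k)$ holds because $k \in H$ means $H$ is a preblock, so every element of $H$ is $R$-related to $k$, i.e. $H \subseteq R(k)$) closes the loop. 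The main obstacle I anticipate is the bookkeeping in (a) $\Rightarrow$ (b): massaging the two auxiliary elements $c, d$ of ($\ddag$) into a single $d$ with the sharper intersection and stability properties, which will likely require invoking ($\ddag$) twice and carefully tracking which $R$-neighbourhood contains which element.
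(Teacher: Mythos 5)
Your route is the same as the paper's: the paper proves (a)$\Rightarrow$(b)$\Rightarrow$(c)$\Rightarrow$(d) and then (d)$\Rightarrow$(b)$\Rightarrow$(a), while you close the cycle with (d)$\Rightarrow$(a) directly by taking $c=d$, which is a harmless variant. Your ``bridge observation'' is exactly the content of Remark~\ref{Rem:Block} combined with $B=\bigcap_{x\in B}R(x)$, and your arguments for (b)$\Rightarrow$(c), (c)$\Rightarrow$(d) and (d)$\Rightarrow$(a) are correct (in (c)$\Rightarrow$(d), the two inclusions $R(d')\subseteq R(d)$ and $R(d)\subseteq R(d')$ both follow from $d\in R(d')$, $d'\in R(d)$ and the fact that blocks are preblocks, as you indicate).

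The genuine gap is (a)$\Rightarrow$(b), which you leave unfinished and describe as ``fiddly,'' anticipating a second application of ($\ddag$). No second application is needed; the missing element-chase is short. Given $a \, R \, b$, take $c,d$ as in ($\ddag$), so that $R(d)\subseteq R(k)$ for every $k\in R(c)$. By reflexivity $c\in R(c)$, so the clause at $k=c$ gives $R(d)\subseteq R(c)$. You already obtained $R(d)\subseteq R(a)$ from $k=a$. Now $b \, R \, d$ means $b\in R(d)\subseteq R(c)$, so the clause at $k=b$ gives $R(d)\subseteq R(b)$, whence $R(d)\subseteq R(a)\cap R(b)$. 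For the stability clause of (b): if $x \, R \, d$, then $x\in R(d)\subseteq R(c)$, and the clause at $k=x$ gives $R(d)\subseteq R(x)$. Note that your proposed application ``at $k=d$'' only yields the vacuous $R(d)\subseteq R(d)$, and re-running ($\ddag$) on the pair $b \, R \, d$ would produce fresh auxiliary elements $c',d'$ with no control relating $R(d')$ back to $R(d)$, so that detour would not close the argument. With the chain above inserted, your proof is complete and coincides with the paper's.
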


\begin{proof}
(a)$\Rightarrow$(b): Let $R$ be a tolerance satisfying ($\ddag$). Then, 
$a \, R \, b$ implies that there are $c\in R(a)$ and $d\in R(b)$ such that 
$R(d)\subseteq R(c)$ and $R(d)\subseteq R(a)$.
Additionally, $b \in R(d)$ implies $b \in R(c)$, which gives $R(d) \subseteq R(b)$. 
Thus, $R(d) \subseteq R(a)\cap R(b)$. 
Finally, if $x \, R \, d$, then $x \in R(d) \subseteq R(c)$, and hence $R(d) \subseteq R(x)$
by  ($\ddag$).

(b)$\Rightarrow$(c): Suppose $R$ satisfies (b). If $a \, R \, b$, then there
is an element $d\in U$ with $R(d)\subseteq R(a)\cap R(b)$. Thus, also $a \, R \, d$ and $b \, R \, d$
hold, and we have $\{a,b,d\}^{2}\subseteq R$. Hence, there is a block $B\in$
$\mathcal{B}(R)$ with $a,b,d\in B$. Note that since $R$ satisfies (b), 
we have  $x \, R \, d$ and $R(d) \subseteq R(x)$ for all $x\in B$. Thus, we get
\[
B\subseteq R(d) \subseteq \bigcap_{x\in B} R(x) = B,
\]
and hence $a,b \in R(d) = B$.

(c)$\Rightarrow$(d): Suppose that (c) holds and let us define a
family $\mathcal{K}$ of blocks by 
$\mathcal{K} = \{B \in \mathcal{B}(R) \mid B= R(d) \text{ for some $d\in U$} \}$. 
Now, for all $x\in U$, $x \, R \, x$ implies that there is a block 
$B \in\mathcal{K}$ containing $x$, and so $\mathcal{K}$ is a
covering of $U$. In view of (c), for any $a \, R \, b$, there is a block
$B\in\mathcal{K}$ with $a,b\in B$. Hence, 
$R\subseteq \bigcup_{B\in\mathcal{K}} B^{2}\subseteq R$, giving 
$R = \bigcup_{B\in\mathcal{K}} B^{2}$. Thus, $R$ is induced by the covering 
$\mathcal{K}$. Finally, we show
that $\mathcal{K}$ is irredundant by proving that 
$\bigcup (\mathcal{K} \setminus\{B\}) \neq U$ for any $B \in \mathcal{K}$. 
Indeed, if $B \in \mathcal{K}$, then there exists $d \in U$ such that $R(d) = B$. 
Suppose that $d \in X$ for some block $X \in \mathcal{K} \setminus\{B\}$. 
Then, $d \, R \, x$ for all $x\in X$, whence we get $X \subseteq R(d) = B$. 
Since $X$ and $B$ are blocks, we obtain $X = B$, a contradiction. 
Thus, $d \notin \bigcup (\mathcal{K}\setminus\{B\})$.

(d)$\Rightarrow$(b): Assume that $R = \bigcup \{ X^{2} \mid X \in \mathcal{H} \}$, 
where $\mathcal{H}$ is an irredundant covering of $U$ and suppose that $a \,R \, b$. 
Then, there exists $X \in \mathcal{H}$ such that $a,b \in X$, and clearly, $x \,R \, y$ for all $x,y\in X$.
Hence, $X \subseteq R(x)$ for all $x \in X$. Since 
$\bigcup (  \mathcal{H} \setminus \{X\} ) \neq U$, there is $d \in X$ such
that $d \notin Y$ for all $Y \in \mathcal{H} \setminus \{X\}$. 
Observe that $d \, R \, y$ for some $y \in U \setminus X$ would imply that 
$\{d,y\}$ is contained in some block $Y \in \mathcal{H}$ different from $X$. 
Since this is impossible, we get $R(d) \subseteq X \subseteq R(x)$ 
for all $x \in X$. In particular, we obtain $R(d)\subseteq R(a)\cap R(b)$, 
and also $R(d)\subseteq R(x)$ for each $x\in U$ with $x \, R \, d$, 
because $x \, R \, d$ implies $x \in R(d) \subseteq X$.

(b)$\Rightarrow$(a): If (b) holds, then ($\ddag$) is satisfied with $c = d$.
\end{proof}

Our next proposition is now clear by the above-mentioned observations.

\begin{proposition} \label{Prop:ComplDistributive}
Let $R$ be a tolerance on a set $U$. The complete lattices $\wp(U)^\DOWN$ and
$\wp(U)^\UP$ are completely distributive if and only if
$R$ is induced by an irredundant covering of $U$.
\end{proposition}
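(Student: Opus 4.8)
The plan is to combine the chain of equivalences that has already been prepared in this section, since essentially no new argument is needed. The first step is to observe that complete distributivity is a property formulated purely in terms of arbitrary meets and joins, and is therefore invariant under lattice isomorphisms. By Proposition~\ref{Prop:ConceptIsom}(a) we have $\wp(U)^\DOWN \cong \wp(U)^\UP \cong \mathfrak{B}(\mathbb{K})$ for the context $\mathbb{K} = (U,U,R^c)$, so it suffices to determine when $\mathfrak{B}(\mathbb{K})$ is completely distributive.

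For that, the second step invokes Theorem~40 of \cite{ganter1999formal}: $\mathfrak{B}(\mathbb{K})$ is completely distributive if and only if the context $\mathbb{K}$ satisfies the condition ($\dag$) quoted above. The third step is to use the translation already carried out in the paragraph following ($\dag$): writing $\mathbb{K} = (G,M,I)$ and using the identities $(x,y)\notin I \iff x\,R\,y$, $\;y\in x'' \iff R(y)\subseteq R(x)$, and $U\setminus\{x\}' = R(x)$, condition ($\dag$) was shown to be equivalent to ($\ddag$).

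The final step applies Theorem~\ref{Thm:DC}, in particular the equivalence (a)$\Leftrightarrow$(d), which states that $R$ satisfies ($\ddag$) exactly when $R$ is a tolerance induced by an irredundant covering of $U$. Concatenating the three equivalences yields the claim: $\wp(U)^\DOWN$ and $\wp(U)^\UP$ are completely distributive $\iff$ $\mathfrak{B}(\mathbb{K})$ is completely distributive $\iff$ ($\dag$) holds $\iff$ ($\ddag$) holds $\iff$ $R$ is induced by an irredundant covering of $U$.

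As for difficulty, there is no genuine obstacle left: all the substantive work — the order-theoretic translation of ($\dag$) into ($\ddag$), and the combinatorial equivalences of Theorem~\ref{Thm:DC} — is already done. The only point deserving an explicit word is the transport of complete distributivity along the isomorphisms of Proposition~\ref{Prop:ConceptIsom}, which is immediate from the definition, so the proof reduces to little more than citing the earlier results in the right order.
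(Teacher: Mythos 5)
Your proof is correct and is exactly the argument the paper intends: the paper's own ``proof'' is the one-line remark that the proposition ``is now clear by the above-mentioned observations,'' namely the isomorphism of Proposition~\ref{Prop:ConceptIsom}, the citation of Theorem~40 of \cite{ganter1999formal}, the translation of ($\dag$) into ($\ddag$), and Theorem~\ref{Thm:DC}. You have simply made that chain of equivalences explicit, including the (correct) remark that complete distributivity transfers along lattice isomorphisms.
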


\begin{remark} \label{Rem:Block}
If $a \, R \, b$ holds for any $a,b \in R(x)$, then $R(x)$ is a block. Namely, 
in this case $R(x)= \bigcap_{a \in R(x)} R(a)$, which means that 
$R(x)$ is block of $R$.
\end{remark}

Next we characterize irredundant coverings in terms of tolerances they induce.

\begin{proposition} \label{Prop:IrredundantCovering}
Let $R$ be a tolerance induced by a covering $\mathcal{H} \subseteq \wp(U)$. Then, the 
following assertions are equivalent:
\begin{enumerate}[\rm (a)]
 \item $\mathcal{H}$ is an irredundant covering;
 \item For each $B \in \mathcal{H}$, there exists $d \in U$ such that $R(d)= B$.
\end{enumerate}
\end{proposition}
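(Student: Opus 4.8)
The plan is to reduce everything to the elementary description of the neighbourhoods of the induced tolerance. Writing $R = R_{\mathcal{H}} = \bigcup \{ B^2 \mid B \in \mathcal{H} \}$, for every $x \in U$ one has $y \in R(x)$ iff some $B \in \mathcal{H}$ contains both $x$ and $y$, so $R(x) = \mathcal{H}_x = \bigcup \{ B \in \mathcal{H} \mid x \in B \}$; in particular $x \in R(x)$, since $R$ is reflexive. This is all that will be used. For the implication (a) $\Rightarrow$ (b), fix $B \in \mathcal{H}$. Irredundancy gives $\bigcup (\mathcal{H} \setminus \{B\}) \neq U$, so pick $d \in U \setminus \bigcup (\mathcal{H} \setminus \{B\})$. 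Since $\mathcal{H}$ covers $U$, the element $d$ lies in some member of $\mathcal{H}$, and by the choice of $d$ the only such member is $B$; hence $R(d) = \mathcal{H}_d = B$.

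For (b) $\Rightarrow$ (a), fix $B \in \mathcal{H}$ and use (b) to choose $d$ with $R(d) = B$. I claim $d \notin \bigcup (\mathcal{H} \setminus \{B\})$. Suppose instead $d \in Y$ for some $Y \in \mathcal{H}$ with $Y \neq B$. Then $Y \subseteq \mathcal{H}_d = R(d) = B$, so in fact $Y \subsetneq B$. Apply (b) a second time, now to $Y$, obtaining $e$ with $R(e) = Y$; since $R$ is reflexive, $e \in R(e) = Y \subseteq B$, and as $B \in \mathcal{H}$ contains $e$ we get $B \subseteq \mathcal{H}_e = R(e) = Y$, contradicting $Y \subsetneq B$. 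Hence no member of $\mathcal{H} \setminus \{B\}$ contains $d$, so $\bigcup (\mathcal{H} \setminus \{B\}) \neq U$. Since $B \in \mathcal{H}$ was arbitrary, $\mathcal{H}$ is irredundant.

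I expect the only point requiring care to be the second half: it is tempting to argue merely that (b) forces $\mathcal{H}$ to be an antichain and to conclude from there, but an antichain covering need not be irredundant (the three two-element subsets of a three-element set form a redundant antichain covering, and one checks that (b) fails for it). The genuine step is therefore the second application of (b) to the offending smaller block $Y$, which is what produces the contradiction $B \subseteq Y \subsetneq B$. Everything else is routine manipulation of the identity $R(x) = \mathcal{H}_x$.
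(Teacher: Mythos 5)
Your proof is correct and follows essentially the same route as the paper's: for (a)$\Rightarrow$(b) you pick an element covered only by $B$, and for (b)$\Rightarrow$(a) the key step is the second application of (b) to the offending block, exactly as in the paper (which takes $d'$ with $R(d')=B'$ and derives $B=B'$). Your use of the identity $R(x)=\mathcal{H}_x$ just streamlines the bookkeeping, and your side remark about the antichain pitfall is apt but not needed.
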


\begin{proof}
(a)$\Rightarrow$(b): Let $\mathcal{H}$ be an irredundant covering and $B \in \mathcal{H}$. This means
that there is $d \in B$ such that $d \notin \bigcup ( \mathcal{H} \setminus \{B\} )$. Since $R$ is
induced by $\mathcal{H}$, we have $d \, R \, b$ for all $b \in B$. Thus, $B \subseteq R(d)$.
On the other hand, if $d \, R \, x$, then there is a block $B' \in \mathcal{H}$ such that $d,x \in B'$.
Since $d \notin \bigcup ( \mathcal{H} \setminus \{B\} )$, we obtain $B' = B$ and $x \in B$.
Thus, also $R(d) \subseteq B$ and $R(d) = B$.

(b)$\Rightarrow$(a): It is enough to show that $d \notin \bigcup ( \mathcal{H} \setminus \{B\} )$,
where $B$ belongs to $\mathcal{H}$ and $d$ is such that $R(d) = B$. Assume for contradiction
that there exists $B ' \in \mathcal{H}$ such that $B' \neq B$ and $d \in B'$. By (b),
there is $d'$ such that $R(d') = B'$. Since $d,d' \in B'$, we have $d \, R \, d'$ and $d, d' \in B$. 
Hence, if $a \in B$, then $a \, R \, d'$ and $a \in R(d') = B'$, that is,
$B \subseteq B'$. Similarly, $a \in B'$ implies $a \, R \, d$ and $a \in R(d) = B$, giving
$B' \subseteq B$. Therefore, $B = B'$, a contradiction.
\end{proof}

\begin{example} \label{Ex:InfSyst1}
In this example, we consider how tolerances induced by an irredundant covering
arise in information systems. By Proposition~\ref{Prop:IrredundantCovering}, the essential condition
is that for each member $B$ of a covering $\mathcal{H}$, there exists $d \in U$ such that $R(d)= B$.

\medskip\noindent%
(a) Information systems introduced by Pawlak \cite{pawlak1981information} are triples
$\mathcal{S} = (U,A,\{V_a\}_{a \in A})$, where $U$ is a nonempty set of \emph{objects}, 
$A$ is nonempty set of \emph{attributes}, and $\{V_a\}_{a \in A}$ is an
indexed set of \emph{value sets of attributes}. Each attribute is a function $a \colon U \to V_a$.
In real-world situations, some attribute values for an object may be missing. In 
\cite{Kryszkiewicz1998} these \emph{null values} are dealt with marking them
by $*$. This kind of information systems are called \emph{incomplete information systems}.
For each $B \subseteq A$, the following tolerance is defined:
\[
 \mathit{sim}_B = \{ (x,y) \in U \times U \mid (\forall a \in A) \, a(x) = a(y) \text{ or }
 a(x) = * \text{ or } a(y) = * \}.
\]
Additionally, let us denote by
\[ 
\mathit{compl}_B(U) = \{ x \in U \mid a(x) \neq * \text{ for all } a \in B \} 
\]
the set of \emph{$B$-complete elements}. 

Let us assume that each element of $U$ is $\mathit{sim}_B$-related to at least one $B$-complete element.  
Then, the family 
$\mathcal{H}_B = \{ \mathit{sim}_B(x) \mid x \in \mathit{compl}_B(U) \}$ is a covering and $\mathcal{H}$
is clearly irreducible, because each $B$-complete element $x$ can belong to only to  
$\mathit{sim}_B(x)$. It is also obvious that $\mathit{sim}_B$ is induced by 
$\mathcal{H}_B$.

\medskip\noindent%
(b) Another way to present incomplete information is to use nondeterministic information systems (see e.g. \cite{Jarv99,DemOrl02}). 
A \emph{nondeterministic information system} $\mathcal{S} = (U,A,\{V_a\}_{a \in A})$ is such that each attribute is a function 
$a \colon U \to \wp(V_a) \setminus \emptyset$. 
Note that Pawlak's ``original'' information systems considered in (a) can be viewed as nondeterministic 
information systems such that $|a(x)| = 1$ for all $x \in U$ and $a \in A$.

We may interpret a nondeterministic information system $\mathcal{S}$ as a so-called \emph{approximate information system} in
such a way that for an object $x \in U$ and an attribute $a \in A$, 
the unique value of the attribute $a$ for the object $x$ is assumed to be in $a(x)$; complete ignorance is
denoted by $a(x) = V_a$.

Let $B \subseteq A$. Resembling case (a), we say that an object $x$ is \emph{$B$-complete} 
if $a(x)$ is a singleton for all $a \in B$. In other words, for a $B$-complete object, all its $B$-values are known precisely, 
without ambiguity. These elements can be considered as leaning examples. 
Let us again denote by $\mathit{compl}_B(U)$ the set of $B$-complete elements. Note that for all $B$-complete elements $x$ and $a \in A$, 
we can simply write $a(x) = v$, that is, $a$ behaves like a single-valued attribute for them.

For any $B \subseteq A$, we can now define the following relation $R_B$:
\[ (x,y) \in R_B \iff \text{exists } c \in \mathit{compl}_B(U) \text{ such that } a(c) \in a(x) \cap a(y) \text{ for all $a \in B$} .\]
This means that two objects $x$ and $y$ are $R_B$-related if and only if there is a completely known element $c$, which
is ``potentially'' $B$-indiscernible with $x$ and $y$. 

It is easy to see that for all $B \subseteq A$, the relation $R_B$ satisfies conditions (b) and (c) of Theorem~\ref{Thm:DC}.
The learning examples $c \in \mathit{compl}_B(U)$ are such that $\{ R_B(c) \mid c \in \mathit{compl}_B(U)\}$ forms an irredundant covering and the relation
induced by this covering is $R_B$. 

Note that both in (a) and (b), if all elements of $U$ are $B$-complete, then the relations $\mathit{sim}_B$ and
$R_B$ are equivalences, and the corresponding irredundant covering is a partition corresponding these equivalences.
\end{example}

Let $L$ be a lattice with a least element $0$. 
The lattice $L$ is \emph{atomistic}, if any element of $L$ is the join of
atoms below it. It is well known (see e.g.\@ \cite{Grat98}) that a 
complete Boolean lattice is atomistic if and only if it is completely distributive.

\begin{proposition} \label{Prop:Boolean}
Let $R$ be a tolerance induced by an irredundant covering of $U$. The complete lattices
$\wp(U)^\DOWN$ and  $\wp(U)^\UP$ are atomistic Boolean lattices such that
$\{ R(x)^\DOWN \mid R(x) \text{ is a block}\, \}$ and $\{ R(x) \mid R(x) \text{ is a block}\, \}$
are their sets of atoms, respectively.
\end{proposition}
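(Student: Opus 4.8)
The plan is to combine the already-established facts. By Proposition~\ref{Prop:ComplDistributive}, the hypothesis that $R$ is induced by an irredundant covering makes $\wp(U)^\DOWN$ and $\wp(U)^\UP$ completely distributive. They are ortholattices by Lemma~\ref{Lem:Ortho} (as recalled after \eqref{Eq:UpLattice}), and a distributive ortholattice is a Boolean lattice with complement $^\bot$ (resp.\ $^\top$); hence both are complete Boolean lattices. The quoted fact that a complete Boolean lattice is atomistic iff it is completely distributive then immediately gives that $\wp(U)^\DOWN$ and $\wp(U)^\UP$ are atomistic. So the only real content is the identification of the atoms.

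First I would treat $\wp(U)^\DOWN$. Since this lattice is atomistic, every nonzero element is a join of atoms below it, and because $\emptyset^\DOWN = \emptyset$ (by reflexivity applied to the dual, or directly), the atoms are exactly the minimal nonempty members of $\wp(U)^\DOWN$. I would show that $\{R(x)^\DOWN : R(x)\text{ is a block}\}$ is precisely this set. For one inclusion: if $R(x)$ is a block, then by Theorem~\ref{Thm:DC}(c)/(b) (applied with $a=b=x$, or directly via Proposition~\ref{Prop:IrredundantCovering}) the block $B=R(x)$ satisfies $R(y)\subseteq R(x)$ for all $y\in R(x)$; I claim $R(x)^\DOWN$ is an atom. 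It is nonempty (it contains $x$, since $R(x)\subseteq R(x)$... more carefully: $x\in R(x)^\DOWN$ iff $R(x)\subseteq R(x)$, true). Now suppose $\emptyset\ne C\subseteq R(x)^\DOWN$ with $C\in\wp(U)^\DOWN$, say $C=Y^\DOWN$; pick $z\in C$, so $R(z)\subseteq R(x)$, hence... I would argue that minimality forces $C=R(x)^\DOWN$ by using that any $z\in R(x)^\DOWN$ has $R(z)\subseteq R(x)$ (since $z\in R(z)$ means $z\in R(x)$, and $R(x)$ being a block with the above property gives $R(z)\subseteq R(x)$, whence $z\in R(x)^\DOWN$ too — so $R(x)^\DOWN$ is ``generated'' by any single one of its points, forcing atomicity). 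For the converse inclusion, I would take an arbitrary atom $a$ of $\wp(U)^\DOWN$, pick $z\in a$, and show $a = R(z)^\DOWN$ with $R(z)$ a block: by Theorem~\ref{Thm:DC}(c) applied to $z\,R\,z$ there is a block $B=R(d)$ with $z\in B$; from $R(d)$ a block one deduces (again via the ``block absorbs its points'' property) that $R(z)=R(d)=B$ is itself a block, and then $R(z)^\DOWN$ is a nonempty element of $\wp(U)^\DOWN$ contained in $a$ (since $z\in a$ and $a=Y^\DOWN$ give $R(z)\subseteq Y$, wait — rather $R(z)\subseteq$ the set, need care), so by atomicity $a=R(z)^\DOWN$.

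For $\wp(U)^\UP$ I would proceed dually, using that $\wp(U)^\DOWN$ and $\wp(U)^\UP$ are dually order-isomorphic via $X^\DOWN\mapsto X^{\DOWN c}$, but more cleanly I would argue directly that the atoms of the interior system $\wp(U)^\UP$ are the minimal nonempty sets of the form $X^\UP$, and that $R(x)=\{x\}^\UP$ is such a minimal set exactly when $R(x)$ is a block: if $R(x)$ is a block then for any $y\in R(x)$ we have $R(y)\subseteq R(x)$, wait that is the wrong direction for upper — I want $R(x)\subseteq R(y)$; reconsidering, since $R(x)$ is a block equal to $\bigcap_{a\in R(x)}R(a)$, for $y\in R(x)$ we get $R(x)\subseteq R(y)$, so $R(x)=\{x\}^\UP\subseteq\{y\}^\UP=R(y)$, and minimality follows because any $\emptyset\ne Z^\UP\subseteq R(x)$ picks up some $y$ with $y\in R(x)$... hence $R(x)\subseteq R(y)\subseteq Z^\UP$... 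I will need to be a little careful that $Z^\UP$ being a union of $R(w)$'s forces one such $R(w)$ inside, giving equality. The main obstacle, then, is not the lattice-theoretic skeleton — that is entirely handled by the cited results — but the bookkeeping in the block-versus-$R(x)$ argument: making precise, in both the $\DOWN$ and $\UP$ cases, why a nonempty approximation set sitting inside $R(x)^\DOWN$ (resp.\ inside $R(x)$) must equal it, which hinges on the characterization in Theorem~\ref{Thm:DC} and Proposition~\ref{Prop:IrredundantCovering} that, under irredundancy, blocks of the form $R(d)$ are precisely the ``absorbing'' neighbourhoods and these are cofinal/coinitial enough to witness every atom.
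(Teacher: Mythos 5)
Your lattice-theoretic skeleton is exactly the paper's: complete distributivity from Proposition~\ref{Prop:ComplDistributive}, the ortholattice structure, hence atomistic Boolean lattices, and then the observation that atoms must be minimal nonempty elements of the form $R(x)$ (resp.\ $R(x)^\DOWN$) because every nonzero $X^\UP=\bigcup_{x\in X}\{x\}^\UP$ has some $R(x)$ below it. The remaining content is the block identification, and here you have two concrete missteps. First, in the $\wp(U)^\DOWN$ argument you assert that a block $B=R(x)$ satisfies $R(y)\subseteq R(x)$ for all $y\in R(x)$. That inclusion goes the wrong way: since $B=\bigcap_{y\in B}R(y)$, a block satisfies $R(x)\subseteq R(y)$ for $y\in R(x)$, and it is this direction that makes the minimality argument work (take $z$ in a nonempty $Y^\DOWN\subseteq R(x)^\DOWN$; then $R(z)\subseteq R(x)$ and $z\in R(x)$ force $R(z)=R(x)\subseteq Y$, whence $R(x)^\DOWN\subseteq Y^\DOWN$). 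You correct the direction yourself in the $\wp(U)^\UP$ paragraph, but the $\DOWN$ paragraph as written relies on a false statement.

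Second, and more seriously, your converse step fails: from ``$z$ lies in a block $B=R(d)$'' you conclude ``$R(z)=R(d)=B$, so $R(z)$ is a block.'' Membership $z\in R(d)$ with $R(d)$ a block only yields $R(d)\subseteq R(z)$, not equality. Example~\ref{Ex:Schreider} with $n=3$ gives a counterexample: $\{1,2\}$ lies in the block $K_1=R(\{1\})$, yet $R(\{1,2\})=K_1\cup K_2$ is not a block. The repair is to reverse the logical order: having found the block $R(d)$ with $z\in R(d)$ via Theorem~\ref{Thm:DC}(c), check that $R(d)^\DOWN$ is a nonempty member of $\wp(U)^\DOWN$ lying below the atom $a$ (since $R(d)\subseteq R(z)\subseteq Y$ gives $R(d)^\DOWN\subseteq Y^\DOWN=a$), invoke atomicity to get $a=R(d)^\DOWN$, and only then, if you wish, deduce $R(z)=R(d)$ from $z\in R(d)^\DOWN$. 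With that reordering your route is actually a legitimate alternative to the paper's converse, which instead fixes an atom $R(x)$, uses Theorem~\ref{Thm:DC}(b) plus the atomicity of $R(x)^\DOWN$ to show that any two elements of $R(x)$ are $R$-related, and then applies Remark~\ref{Rem:Block}. As submitted, though, the proposal contains a step that would fail.
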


\begin{proof}
By Proposition~\ref{Prop:ComplDistributive}, $\wp(U)^\DOWN$ and
$\wp(U)^\UP$ are completely distributive. 
Because they are ortholattices also, they are Boolean lattices, and
since they are completely distributive, they must be atomistic.

Atoms of  $\wp(U)^\UP$ need to be of the form $R(x)$, because the map $^\UP$ is
order-preserving and $R(x) = \{x\}^\UP$. Since $^\DOWN$ is an isomorphism from
$\wp(U)^\UP$ to $\wp(U)^\DOWN$, the atoms of  $\wp(U)^\DOWN$ are of the form 
$R(x)^\DOWN$.

Suppose that $R(x)$ is a block and $R(y) \subseteq R(x)$. Because $y \in R(x)$
and $R(x)$ is a block, we must have $R(y) = R(x)$, and
hence $R(x)$ is an atom of  $\wp(U)^\UP$. Analogously,  $R(x)^\DOWN$
is an atom of $\wp(U)^\DOWN$, whenever $R(x)$ is a block.

On the other hand, suppose $R(x)$ is an atom of  $\wp(U)^\UP$; then 
$R(x)^\DOWN$ is an atom of  $\wp(U)^\DOWN$. Suppose that  $a,b \in R(x)$.
Since $a \, R \, x$, by Theorem~\ref{Thm:DC}, there exists $d \in U$ with
$R(d)\subseteq R(a) \cap R(x)$, and so  $d \in R(a)^\DOWN \cap R(x)^\DOWN$.
Hence, $\emptyset \subset R(a)^\DOWN \cap R(x)^\DOWN \subseteq R(x)^\DOWN$, and
because  $R(x)^\DOWN$ is an atom, 
we have $R(a)^\DOWN \cap R(x)^\DOWN = R(x)^\DOWN$, that is,
$R(x)^\DOWN \subseteq R(a)^\DOWN$. Now $x \in R(x)^\DOWN \subseteq R(a)^\DOWN$
implies $b \in R(x) \subseteq R(a)$. Thus, $a \, R \, b$, and so, 
by Remark~\ref{Rem:Block}, $R(x)$ is a block. 
\end{proof}

\begin{example}\label{Ex:Schreider}
Let $A = \{1,2,\ldots,n\}$ be a finite set. We define a tolerance $R$ on 
$U = \wp(A) \setminus \{\emptyset\}$ by setting for any nonempty subsets $B,C\subseteq A$:
\[
(B, C) \in R \iff B \cap C \neq \emptyset.
\]
The structure $(U,R)$ is called an \emph{$n-1$-dimensional simplex} (see \cite{Shreider}). 
Let $i\in A$ and define the set $K_{i} = \{B \in U  \mid i\in B\}$.
Clearly, $R(\{i\})=K_{i}$, and it is easy to see that $K_{i}$ is also a
tolerance block. Now let $\mathcal{H} = \{K_{1},K_{2},...,K_{n}\}$. Then,
$(B, C) \in R$ means that $B$ and $C$ have a common element $j$ and 
$(B,C)\in {K_j}^2$. Hence, 
\[
 R = {K_1}^2 \cup {K_2}^2 \cup \cdots \cup {K_n}^2.
\]
Clearly, $\wp(A)\setminus \{\emptyset\} = K_{1} \cup K_{2} \cup...\cup K_{n}$, that is,
$\mathcal{H}$ is a covering of $U$. This covering is irredundant, because
if we omit $K_j$, then the set $\{j\}$ cannot be covered.
For instance, if $n = 3$, then $K_1 = \{ \{1\}, \{1,2\}, \{1,3\}, U$\},
$K_2 = \{ \{2\}, \{1,2\}, \{2,3\}, U$\}, and  $K_3 = \{ \{3\}, \{1,3\}, \{2,3\}, U$\}.
Also the set $\{ \{1,2\}, \{1,3\}, \{2,3\}, U \}$ is a block of $R$, showing
that $\mathcal{H} \subset \mathcal{B}(R_\mathcal{H})$.
Because $\wp(U)$ is finite, by Proposition~\ref{Prop:Boolean} this means that  $\wp(U)^\DOWN$ and $\wp(U)^\UP$  
are finite Boolean lattices.
\end{example}

For a tolerance $R$, an \emph{$R$-path} is a sequence $a_0, a_1, \ldots,a_n$ of distinct elements of $U$ 
such that $a_i \, R \, a_{i+1}$ for all $0 \leq i \leq n-1$. The \emph{length} of a path 
is the number of elements in the sequence minus one. Note that each point of $U$ forms a path of length zero.
We denote by $\overline{R}$ the transitive closure of $R$, that is,
\[ \overline{R} = R \cup R^2 \cup R^3 \cup \cdots \cup R^n \cup \cdots \ . \]
Then, $\overline{R}$ is the smallest equivalence containing $R$.
Note that for any $X \subseteq U$, the upper $\overline{R}$-approximation ${\overline{R}}(X)$
of $X$ consists of the elements that are connected to at 
least one element in $X$ by an $R$-path. 
We denote ${\overline{R}}(X)$ simply by $\overline{X}$. 

For any $a \in \overline X$, we define the \emph{distance of $a$ from $X$}, denoted 
$\delta(a,X)$, as the minimal length of an $R$-path connecting $a$ at least to one element 
in $X$. Note that if $a \in X$, then $\delta(a,X) = 0$. For any $n \geq 0$, let us define the set
\[ X^n = \{ a \in U \mid \delta(a,X) = n \} .\]
The above means that $\overline{X} = \bigcup_{n \geq 0} X^n$. In addition, we denote:
\begin{align*}
X_{\rm  even} &= X^0 \cup X^2 \cup X^4 \cup \cdots \cup X^{2k} \cup \cdots \  \\
X_{\rm odd}   &= X^1 \cup X^3 \cup X^5 \cup \cdots \cup X^{2k+1} \cup \cdots \ 
\end{align*}
Our the next lemma presents some properties of $X_{\rm  even}$ and $X_{\rm odd}$
that will be needed in the proofs of the next section.

\begin{lemma}\label{Lem:OddEven}
If $\rho$ be a tolerance on $U$, then the
following assertions hold for all $X \subseteq U$:
\begin{enumerate}[\rm (a)]
\item $\overline{X}$ is a disjoint union of $X_{\rm even}$ and $X_{\rm odd}$. 
\item $X\subseteq X_{\rm even}$ and $\rho(X) \setminus X \subseteq X_{\rm odd}$.
\item $X_{\rm odd} \subseteq \rho(X_{\rm even}) = \overline{X}$ and $\overline{X} \setminus X \subseteq \rho(  X_{\rm odd})$.
\item $X \subseteq \rho(\overline{X}\setminus X)$ implies
$\rho(X_{\rm odd}) = \rho(X_{\rm even}) = \overline{X}$.
\end{enumerate}
\end{lemma}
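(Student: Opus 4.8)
The plan is to reduce everything to one elementary fact about the distance function $\delta(\cdot,X)$. First I would record that $\delta(\cdot,X)$ is $1$-Lipschitz along $\rho$: if $a\,\rho\,b$ then $|\delta(a,X)-\delta(b,X)|\le 1$, since prefixing the edge $ab$ to a shortest $\rho$-path from $b$ to $X$ produces one from $a$ to $X$ of length $\delta(b,X)+1$, and symmetrically. The consequence I will actually use is: \emph{if $\delta(a,X)=n\ge 1$, then $b\,\rho\,a$ and $\delta(b,X)=n-1$ for some $b$}. Indeed, on a shortest $\rho$-path $a=a_0,a_1,\dots,a_n$ with $a_n\in X$, the $n$ successive differences $\delta(a_i,X)-\delta(a_{i+1},X)$ are each at most $1$ and sum to $\delta(a_0,X)-\delta(a_n,X)=n-0=n$, hence each equals $1$, so $b=a_1$ works.

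Part (a) is then immediate: each $a\in\overline X$ has a single distance $\delta(a,X)\in\{0,1,2,\dots\}$, so the sets $X^n$ are pairwise disjoint with union $\overline X$, and sorting them by parity yields $X_{\rm even}\cap X_{\rm odd}=\emptyset$ and $X_{\rm even}\cup X_{\rm odd}=\overline X$. For (b): $\delta(a,X)=0$ holds precisely when $a\in X$, so $X=X^0\subseteq X_{\rm even}$; and if $a\in\rho(X)\setminus X$, then picking $x\in X$ with $x\,\rho\,a$ gives a $\rho$-path $x,a$ of length $1$ (its two entries are distinct because $a\notin X$), whence $1\le\delta(a,X)\le1$, i.e. $a\in X^1\subseteq X_{\rm odd}$.

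For (c): the inclusion $X_{\rm odd}\subseteq\rho(X_{\rm even})$ is just the key fact applied to an element of odd distance $2k+1$, which is $\rho$-adjacent to one of even distance $2k$. Next, $\overline X$ is $\rho$-closed, since $\rho\subseteq\overline\rho$ and $\overline\rho$ is an equivalence, so $\rho(\overline X)\subseteq\overline\rho(\overline\rho(X))=\overline\rho(X)=\overline X$; hence $\rho(X_{\rm even})\subseteq\rho(\overline X)=\overline X$, while reflexivity on $X_{\rm even}$ together with $X_{\rm odd}\subseteq\rho(X_{\rm even})$ gives $\overline X=X_{\rm even}\cup X_{\rm odd}\subseteq\rho(X_{\rm even})$, so $\rho(X_{\rm even})=\overline X$. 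For $\overline X\setminus X\subseteq\rho(X_{\rm odd})$, take $a$ with $\delta(a,X)=n\ge1$: if $n$ is odd, then $a\in X_{\rm odd}\subseteq\rho(X_{\rm odd})$ by reflexivity; if $n$ is even (so $n\ge2$), the key fact supplies $b\,\rho\,a$ with $\delta(b,X)=n-1$ odd, so $a\in\rho(X_{\rm odd})$.

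For (d): $\rho(X_{\rm even})=\overline X$ is already part (c), and $\rho(X_{\rm odd})\subseteq\rho(\overline X)=\overline X$ by $\rho$-closedness, so in view of $\overline X\setminus X\subseteq\rho(X_{\rm odd})$ it only remains to prove $X\subseteq\rho(X_{\rm odd})$. Given $x\in X$, the hypothesis yields $a\in\overline X\setminus X$ with $x\,\rho\,a$, and the Lipschitz bound gives $\delta(a,X)\le\delta(x,X)+1=1$, which forces $\delta(a,X)=1$ because $a\notin X$; thus $a\in X^1\subseteq X_{\rm odd}$ and $x\in\rho(X_{\rm odd})$. I expect no genuine difficulty; the one place to be careful is that $\rho$-paths are required to have \emph{distinct} entries, so one should note that the minimal walk-length from a point to $X$ equals the minimal path-length, and handle the degenerate cases $n=0,1$ explicitly where they arise above.
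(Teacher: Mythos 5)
Your proof is correct and follows essentially the same route as the paper: parity decomposition via the distance function, with (b)--(d) argued exactly as in the paper's proof. The only difference is that you isolate the $1$-Lipschitz property of $\delta(\cdot,X)$ as an explicit lemma, which in fact cleanly justifies the step (asserted without comment in the paper's proof of (c)) that the predecessor $a_{2k}$ on a minimal path to an element of $X^{2k+1}$ really lies in $X^{2k}$ and hence in $X_{\rm even}$.
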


\begin{proof} Claim (a) is obvious.

(b) Clearly $X = X^0 \subseteq X_{\rm even}$, and $\rho(X) = X \cup X^1$ implies
$\rho(X) \setminus X = X^1 \subseteq X_{\rm odd}$.

(c) Suppose that $a \in X_{\rm odd}$. Then, there is a $\rho$-path $(a_0,\ldots,a_{2k},a_{2k+1})$ of length
$2k + 1$ such that $a_0 \in X$ and $a_{2k + 1} = a$. So,  $(a_0,\ldots,a_{2k})$ is a $\rho$-path of length
$2k$ and thus $a_{2k} \in X_{\rm even}$. Now $(a, a_{2k}) \in \rho$ gives $a \in \rho(X_{\rm even})$. 
In addition,  $\overline{X} = X_{\rm odd} \cup X_{\rm even} \subseteq \rho(X_{\rm even}) \subseteq \overline{X}$.
For the other part, let $a \in \overline{X} \setminus X$. Then, we have $a \in X^n$ for some $n \geq 1$.
If $n$ is an odd number, then $a \in X_{\rm odd} \subseteq  \rho(X_{\rm odd})$. If $n$ is even,
then $n-1$ is odd and $a \in \rho(X^{n-1}) \subseteq \rho(X_{\rm odd})$.

(d) Assume $X \subseteq \rho(\overline{X} \setminus X)$. Then, for each $x \in X$, there is 
$y \in \overline{X} \setminus X$ such that $x \, \rho \, y$. Since $y \in X^1 \subseteq X_{\rm odd}$,
we get $x \in \rho(X_{\rm odd})$ and  $X \subseteq  \rho( X_{\rm odd})$. Thus,
$\overline{X} = X \cup (\overline{X} \setminus X) \subseteq \rho(X_{\rm odd}) \subseteq \overline{X}$.
\end{proof}

\section{Lattice structures of rough sets determined by tolerances}
\label{Sec:OrderedSets}

Let $R$ be a tolerance on $U$.
We begin by considering the set  $\wp(U)^\DOWN \times \wp(U)^\UP$ ordered coordinatewise
by $\subseteq$. It is clear that  $\wp(U)^\DOWN \times \wp(U)^\UP$
is a complete lattice such that
\begin{align}
\bigwedge_{i \in I} (A_i,B_i) = 
\Big ( \bigcap_{i \in I} A_i, \big (\bigcap_{i \in I} B_i \big )^{\DOWN \UP} \Big )
\intertext{and}
\bigvee_{i \in I} (A_i,B_i) = 
\Big (  \big (\bigcup_{i \in I} A_i, \big )^{\UP \DOWN}, \bigcup_{i \in I} B_i  \Big )
\end{align}
for all $(A_i,B_i)_{i \in I} \subseteq \wp(U)^\DOWN \times \wp(U)^\UP$. Let us
define a map ${\sim}$ on $\wp(U)^\DOWN \times \wp(U)^\UP$ by setting for any $(A,B) \in 
\wp(U)^\DOWN \times \wp(U)^\UP$:
\begin{equation}\label{Eq:Negation}
 {\sim}(A,B) = (B^c, A^c).
\end{equation}
The map ${\sim}$ can be viewed as a so-called \emph{De~Morgan operation}, 
because it satisfies for all $(A,B) \in \wp(U)^\DOWN \times \wp(U)^\UP$,
\[
{\sim}{\sim}(A,B) = (A,B)
\]
and
\[(A_1, B_1) \leq (A_2,B_2) \text{ if and only if } 
{\sim} (A_1, B_1)  \geq {\sim} (A_2, B_2) .
\]
In this work, lattices with a De~Morgan operation are called \emph{polarity lattices}.
Note that so-called De~Morgan lattices/algebras are usually distributive, but polarity
lattices are generally not \cite{Birk95}.

As in the case of equivalences, the set of all rough sets is denoted by 
$\mathit{RS} = \{ (X^\DOWN, X^\UP) \mid X \subseteq U \}$. Obviously, 
$\mathit{RS} \subseteq \wp(U)^\DOWN \times \wp(U)^\UP$ and $\mathit{RS}$ is
bounded with $(\emptyset,\emptyset)$ as the least element and $(U,U)$
as the greatest element. It is known \cite{Jarv07} that $\mathit{RS}$ is self-dual by the map 
${\sim}$, and for all $X \subseteq U$, 
\[ {\sim}(X^\DOWN, X^\UP) = (X^{\UP c}, X^{\DOWN c}) = (X^{c \UP}, X^{c \DOWN}).\] 

Recall that the map $X \mapsto X^{\UP \DOWN}$ is a closure operator and $X \mapsto X^{\DOWN \UP}$ 
is an interior operator (see Section~\ref{Sec:ToleranceApproximations}). 
Our next lemma shows how $\mathit{RS}$ can be also represented up to isomorphism 
as interior-closure pairs.

\begin{lemma} \label{Lem:Isomorphic}
If $R$ is a tolerance, then $\mathit{RS} \cong \{ (X^{\DOWN\UP}, X^{\UP\DOWN}) \mid X \subseteq U \}$.
\end{lemma}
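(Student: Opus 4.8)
The goal is to exhibit an order-isomorphism between $\mathit{RS} = \{(X^\DOWN, X^\UP) \mid X \subseteq U\}$ and the set of interior-closure pairs $P := \{(X^{\DOWN\UP}, X^{\UP\DOWN}) \mid X \subseteq U\}$, where both sets carry the coordinatewise order induced by $\subseteq$. The plan is to define a candidate map $\varphi \colon \mathit{RS} \to P$ and check it is a well-defined order-isomorphism. The natural candidate is
\[
\varphi \colon (X^\DOWN, X^\UP) \longmapsto (X^{\DOWN\UP}, X^{\UP\DOWN}).
\]
First I would verify this is well-defined, i.e. independent of the representative $X$: if $(X^\DOWN, X^\UP) = (Y^\DOWN, Y^\UP)$, then applying $^\UP$ to the first coordinates gives $X^{\DOWN\UP} = Y^{\DOWN\UP}$, and applying $^\DOWN$ to the second coordinates gives $X^{\UP\DOWN} = Y^{\UP\DOWN}$; hence $\varphi$ is well-defined. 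Surjectivity is immediate from the definition of $P$.

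The heart of the argument is injectivity together with the fact that $\varphi$ and $\varphi^{-1}$ are both order-preserving; I would get all of this at once by showing that $\varphi$ is an order-isomorphism onto its image, which amounts to proving
\[
X^\DOWN \subseteq Y^\DOWN \text{ and } X^\UP \subseteq Y^\UP
\quad\Longleftrightarrow\quad
X^{\DOWN\UP} \subseteq Y^{\DOWN\UP} \text{ and } X^{\UP\DOWN} \subseteq Y^{\UP\DOWN}.
\]
The forward implication is trivial since $^\UP$ and $^\DOWN$ are order-preserving (Lemma~\ref{Lem:GaloisProperty}, or the basic properties recalled in Section~\ref{Sec:Preliminaries}). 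For the reverse implication, the key observation is that one can recover $X^\DOWN$ from $X^{\UP\DOWN}$ and $X^\UP$ from $X^{\DOWN\UP}$. Indeed, since $(^\UP, ^\DOWN)$ is a Galois connection on $\wp(U)$, Lemma~\ref{Lem:GaloisProperty}(a) gives $X^{\UP\DOWN\UP} = X^\UP$ and $X^{\DOWN\UP\DOWN} = X^\DOWN$. So applying $^\UP$ to the hypothesis $X^{\UP\DOWN} \subseteq Y^{\UP\DOWN}$ yields $X^\UP = X^{\UP\DOWN\UP} \subseteq Y^{\UP\DOWN\UP} = Y^\UP$, and applying $^\DOWN$ to $X^{\DOWN\UP} \subseteq Y^{\DOWN\UP}$ yields $X^\DOWN = X^{\DOWN\UP\DOWN} \subseteq Y^{\DOWN\UP\DOWN} = Y^\DOWN$. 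This establishes the equivalence, hence injectivity (taking $Y = X'$ and symmetrizing) and that both $\varphi$ and its inverse preserve order.

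I do not expect any real obstacle here: the whole proof is essentially the observation that $^\DOWN$ and $^\UP$ restrict to mutually inverse bijections between $\wp(U)^\DOWN$ and $\wp(U)^\UP$ (already noted in Section~\ref{Sec:ToleranceApproximations}), so composing the rough-set pair $(X^\DOWN, X^\UP)$ with $(^\UP, ^\DOWN)$ coordinatewise is an order-isomorphism. The only point requiring a moment's care is making sure one applies the correct one of the two Galois identities in each coordinate — $X^{\UP\DOWN\UP} = X^\UP$ for the upper coordinate and $X^{\DOWN\UP\DOWN} = X^\DOWN$ for the lower coordinate — and that the representative-independence check is done before asserting $\varphi$ is a map. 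Once these are in place, the isomorphism $\mathit{RS} \cong P$ follows.
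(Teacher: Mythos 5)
Your proposal is correct and follows essentially the same route as the paper: the same map $\varphi\colon (X^\DOWN,X^\UP)\mapsto (X^{\DOWN\UP},X^{\UP\DOWN})$, the forward direction by order-preservation of $^\UP$ and $^\DOWN$, and the reverse direction via the Galois identities $X^{\DOWN\UP\DOWN}=X^\DOWN$ and $X^{\UP\DOWN\UP}=X^\UP$, plus surjectivity by construction. Your explicit well-definedness check is a small (and harmless) addition the paper leaves implicit, since it follows from the order-embedding property anyway.
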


\begin{proof}
We show that the map $\varphi \colon (X^\DOWN,X^\UP) \mapsto  (X^{\DOWN\UP}, X^{\UP\DOWN})$ is an order-isomorphism.
If $(X^\DOWN,X^\UP) \leq (Y^\DOWN,Y^\UP)$, then $X^\DOWN \subseteq Y^\DOWN$ implies 
$X^{\DOWN\UP} \subseteq Y^{\DOWN\UP}$. Similarly, $X^\UP \subseteq Y^\UP$ gives 
$X^{\UP\DOWN} \subseteq Y^{\UP\DOWN}$.
Thus, $(X^{\DOWN\UP}, X^{\UP\DOWN}) \leq (Y^{\DOWN\UP}, Y^{\UP\DOWN})$.

On the other hand, if  $(X^{\DOWN\UP}, X^{\UP\DOWN}) \leq (Y^{\DOWN\UP}, Y^{\UP\DOWN})$,
then $X^{\DOWN\UP} \subseteq Y^{\DOWN\UP}$ implies $X^\DOWN = X^{\DOWN\UP\DOWN} \subseteq 
Y^{\DOWN\UP\DOWN} = Y^\DOWN$, and from $X^{\UP\DOWN} \subseteq Y^{\UP\DOWN}$ we get
$X^\UP = X^{\UP\DOWN\UP} \subseteq Y^{\UP\DOWN\UP} = Y^\UP$. 
So, $(X^\DOWN,X^\UP) \leq (Y^\DOWN,Y^\UP)$.

Thus, $\varphi$ is an order-embedding. The map $\varphi$ is surjective,
because any pair $(X^{\DOWN\UP}, X^{\UP\DOWN})$ is the image of
$(X^\DOWN,X^\UP) \in \mathit{RS}$.
\end{proof}

However, not any pair of interior and closure operators defines up to isomorphism 
the same structure as rough sets determined by tolerances.  We present the following characterization 
of rough sets in terms of interior and closure operators.

\begin{proposition}\label{Prop:ClosureRepr}
Let $\mathcal{I}$ and $\mathcal{C}$ be lattice-theoretical 
interior and closure operators on the set $U$. Then, there exists a tolerance on U such that 
$\mathit{RS} \cong \{ ( \mathcal{I}(X), \mathcal{C}(X) ) \mid X \subseteq U\}$ if
and only if there exists a Galois connection $(F,G)$ on $\wp(U)$
such that $\mathcal{C} = G \circ F$, \ $\mathcal{I} = F \circ G$ 
and the following conditions hold for all $x,y \in U$:
\begin{enumerate}[\rm (i)]
 \item $x \in F(\{x\})$;
 \item $x \in F(\{y\})$ implies $y \in F(\{x\})$.
\end{enumerate}
\end{proposition}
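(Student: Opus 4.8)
The statement is an "if and only if" characterizing when a pair $(\mathcal{I},\mathcal{C})$ of interior/closure operators yields a structure isomorphic to some $\mathit{RS}$. The plan is to reduce both directions to the already-proven Proposition~\ref{Prop:GaloisCharacterization} together with Lemma~\ref{Lem:Isomorphic}, which tells us that $\mathit{RS} \cong \{(X^{\DOWN\UP},X^{\UP\DOWN}) \mid X \subseteq U\}$, i.e. the rough-set structure is already realized by the interior operator $X \mapsto X^{\DOWN\UP} = (F\circ G)(X)$ and the closure operator $X \mapsto X^{\UP\DOWN} = (G\circ F)(X)$ coming from the Galois connection $({^\UP},{^\DOWN})$.

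For the direction ($\Leftarrow$): suppose $(F,G)$ is a Galois connection on $\wp(U)$ with $\mathcal{C} = G\circ F$, $\mathcal{I} = F\circ G$, satisfying (i) and (ii). By Proposition~\ref{Prop:GaloisCharacterization}, conditions (i) and (ii) guarantee there is a tolerance $R$ on $U$ with $F = {^\UP}$ and $G = {^\DOWN}$. Then $\mathcal{I}(X) = (F\circ G)(X) = X^{\DOWN\UP}$ and $\mathcal{C}(X) = (G\circ F)(X) = X^{\UP\DOWN}$ for all $X$, so $\{(\mathcal{I}(X),\mathcal{C}(X)) \mid X\subseteq U\} = \{(X^{\DOWN\UP},X^{\UP\DOWN}) \mid X\subseteq U\}$, which by Lemma~\ref{Lem:Isomorphic} is isomorphic to $\mathit{RS}$ for this tolerance $R$. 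This direction is essentially immediate once the pieces are assembled.

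For the direction ($\Rightarrow$): suppose there is a tolerance $R$ on $U$ with $\mathit{RS} \cong \{(\mathcal{I}(X),\mathcal{C}(X)) \mid X\subseteq U\}$. Take $F = {^\UP}$ and $G = {^\DOWN}$ associated to $R$; this pair is a Galois connection on $\wp(U)$ (since $R$ is symmetric), and since $R$ is a tolerance, (i) and (ii) hold by Proposition~\ref{Prop:GaloisCharacterization}. It remains to check that this particular $(F,G)$ satisfies $\mathcal{C} = G\circ F$ and $\mathcal{I} = F\circ G$ — i.e. that $\mathcal{C}(X) = X^{\UP\DOWN}$ and $\mathcal{I}(X) = X^{\DOWN\UP}$ for all $X$. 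This is the subtle point: the hypothesis only gives an abstract order-isomorphism between $\mathit{RS}$ and the set of pairs $(\mathcal{I}(X),\mathcal{C}(X))$, not that the operators themselves coincide with the ones built from $R$. The recovery argument I would use: the isomorphism $\psi$ sends $(\emptyset,\emptyset)$ to the bottom $(\mathcal{I}(\emptyset),\mathcal{C}(\emptyset)) = (\emptyset,\emptyset)$ and $(U,U)$ to the top. More importantly, I would argue that the interior operator and closure operator are each \emph{determined} by the ordered set of their closed/open elements together with how $\wp(U)$ maps onto them; since $\mathcal{I}(\wp(U))$ must be order-isomorphic to $\wp(U)^\UP$ and $\mathcal{C}(\wp(U))$ to $\wp(U)^\DOWN$ (reading off the two coordinate projections of the isomorphism $\psi$, using that in both structures the projections are surjective lattice homomorphisms onto $\wp(U)^\UP$, $\wp(U)^\DOWN$), and since a lattice-theoretic interior (resp. closure) operator on $\wp(U)$ is uniquely determined by its image as a subset of $\wp(U)$, the operators $\mathcal{I},\mathcal{C}$ must have images $\wp(U)^\UP$, $\wp(U)^\DOWN$ respectively, hence $\mathcal{I} = {^{\DOWN\UP}} = F\circ G$ and $\mathcal{C} = {^{\UP\DOWN}} = G\circ F$.

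The main obstacle is exactly this last step of the ($\Rightarrow$) direction: an abstract isomorphism of the pair-structures need not force the component operators to agree literally with those induced by $R$, unless one pins down that $\mathcal{I}$ (a lattice-theoretical interior operator, hence idempotent, contractive, monotone) is recovered from its fixed-point set, and that this fixed-point set is forced by the isomorphism to be $\wp(U)^\UP$. If the intended reading of the proposition is weaker — that we are free to \emph{choose} $(F,G)$ built from some tolerance rather than requiring the given $\mathcal{I},\mathcal{C}$ to be those of $R$ — then even the forward direction is delicate, because we would still need the abstract isomorphism to let us reconstruct, up to isomorphism, operators with the right fixed-point lattices; I would handle this by transporting $\mathcal{I},\mathcal{C}$ across $\psi$ and using Lemma~\ref{Lem:GaloisProperty}(f) to see that the Galois connection is uniquely recoverable from either of its images, so the choice $F = {^\UP}$, $G = {^\DOWN}$ for the witnessing tolerance $R$ does the job.
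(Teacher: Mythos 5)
Your ($\Leftarrow$) direction is correct and coincides with the paper's: apply Proposition~\ref{Prop:GaloisCharacterization} to obtain a tolerance $R$ with $F={^\UP}$, $G={^\DOWN}$, note $\mathcal{I}=F\circ G={}^{\DOWN\UP}$ and $\mathcal{C}=G\circ F={}^{\UP\DOWN}$, and invoke Lemma~\ref{Lem:Isomorphic}. No issues there.

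For ($\Rightarrow$), you have correctly spotted that the literal statement only supplies an abstract order-isomorphism, but your proposed recovery argument has a genuine gap. From the isomorphism $\psi$ you can at best conclude that $\mathcal{I}(\wp(U))$ and $\mathcal{C}(\wp(U))$ are \emph{order-isomorphic} to $\wp(U)^\UP$ and $\wp(U)^\DOWN$; this does not force them to be \emph{equal} as subsets of $\wp(U)$, and an interior operator is determined by its image only once that image is fixed as a set, not up to isomorphism. Moreover, $\psi$ is an isomorphism of the pair-sets and need not commute with the coordinate projections, so you cannot even read off that the first projection of $\{(\mathcal{I}(X),\mathcal{C}(X))\}$ maps onto something identifiable with $\wp(U)^\UP$. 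In fact the literal statement fails: on $U=\{a,b\}$ take $\mathcal{C}=\mathrm{id}$ and $\mathcal{I}(X)=\emptyset$ for $X\neq U$, $\mathcal{I}(U)=U$; then $\{(\mathcal{I}(X),\mathcal{C}(X))\}$ is a four-element Boolean lattice, isomorphic to $\mathit{RS}$ for the identity tolerance, yet no Galois connection $(F,G)$ has $G\circ F=\mathrm{id}$ and $F\circ G=\mathcal{I}$ (the first forces $F$ injective, the second forces $|F(\wp(U))|=2$). The paper avoids all of this by reading the forward direction in the weaker, intended sense: it simply \emph{takes} $\mathcal{C}$ and $\mathcal{I}$ to be the operators $X\mapsto X^{\UP\DOWN}$ and $X\mapsto X^{\DOWN\UP}$ induced by the given tolerance, observes that $({^\UP},{^\DOWN})$ is the required Galois connection, and checks (i) and (ii) from $R(x)=\{x\}^\UP$. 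Under that reading your closing hedge (choose $F={^\UP}$, $G={^\DOWN}$ for the witnessing tolerance) is exactly the paper's proof; the more ambitious reconstruction you sketch cannot be repaired.
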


\begin{proof}
($\Rightarrow$)\, Let $R$ be a tolerance on $U$. 
We denote the closure operator $X \mapsto X^{\UP\DOWN}$ by $\mathcal{C}$ 
and the interior operator $X \mapsto X^{\DOWN\UP}$ by $\mathcal{I}$. Then,
$\mathit{RS}$ is order-isomorphic to $\{ ( \mathcal{I}(X), \mathcal{C}(X) ) \mid X \subseteq U\}$
by Lemma~\ref{Lem:Isomorphic}. Because $R(x) = \{x\}^\UP$  for all $x \in U$, 
conditions (i) and (ii) hold.

($\Leftarrow$)\, Suppose that $(F,G)$ is a Galois connection satisfying $\mathcal{C} = G \circ F$, 
$\mathcal{I} = F \circ G$, and that conditions (i) and (ii) hold for $F$. 
Let us define a relation $R$ by setting $R(x) = F(\{x\})$.
As in the proof of Proposition~\ref{Prop:GaloisCharacterization}, we can see that
$R$ is a tolerance on $U$ such that $X^\UP = F(X)$ and $X^\DOWN = G(X)$ for all $X \subseteq U$.
It is now clear that for all $X \subseteq U$, $\mathcal{I}(X) = X^{\DOWN\UP}$ and
$\mathcal{C}(X) = X^{\UP\DOWN}$, and
$\mathit{RS} \cong \{ ( \mathcal{I}(X), \mathcal{C}(X) ) \mid X \subseteq U\}$ 
follows from Lemma~\ref{Lem:Isomorphic}.
\end{proof}

If $|U| \leq 4$, then $\mathit{RS}$ is a lattice, but when $|U| \geq 5$, 
$\mathit{RS}$ does not necessarily form a lattice, as can be seen
in the following example; see \cite{Jarv99,Jarv01}.

\begin{example} \label{Ex:Counter}
Let $R$ be a tolerance on $U = \{a,b,c,d,e\}$ such that
$R(a) = \{a,b\}$, $R(b) = \{a,b,c\}$, $R(c) = \{b,c,d\}$, $R(d) = \{c,d,e\}$, and
$R(e) = \{d,e\}$. The ordered set $\mathit{RS}$ is depicted in Figure~\ref{Fig:Fig1}.

\begin{figure}[ht]
\centering
\includegraphics[width=\textwidth]{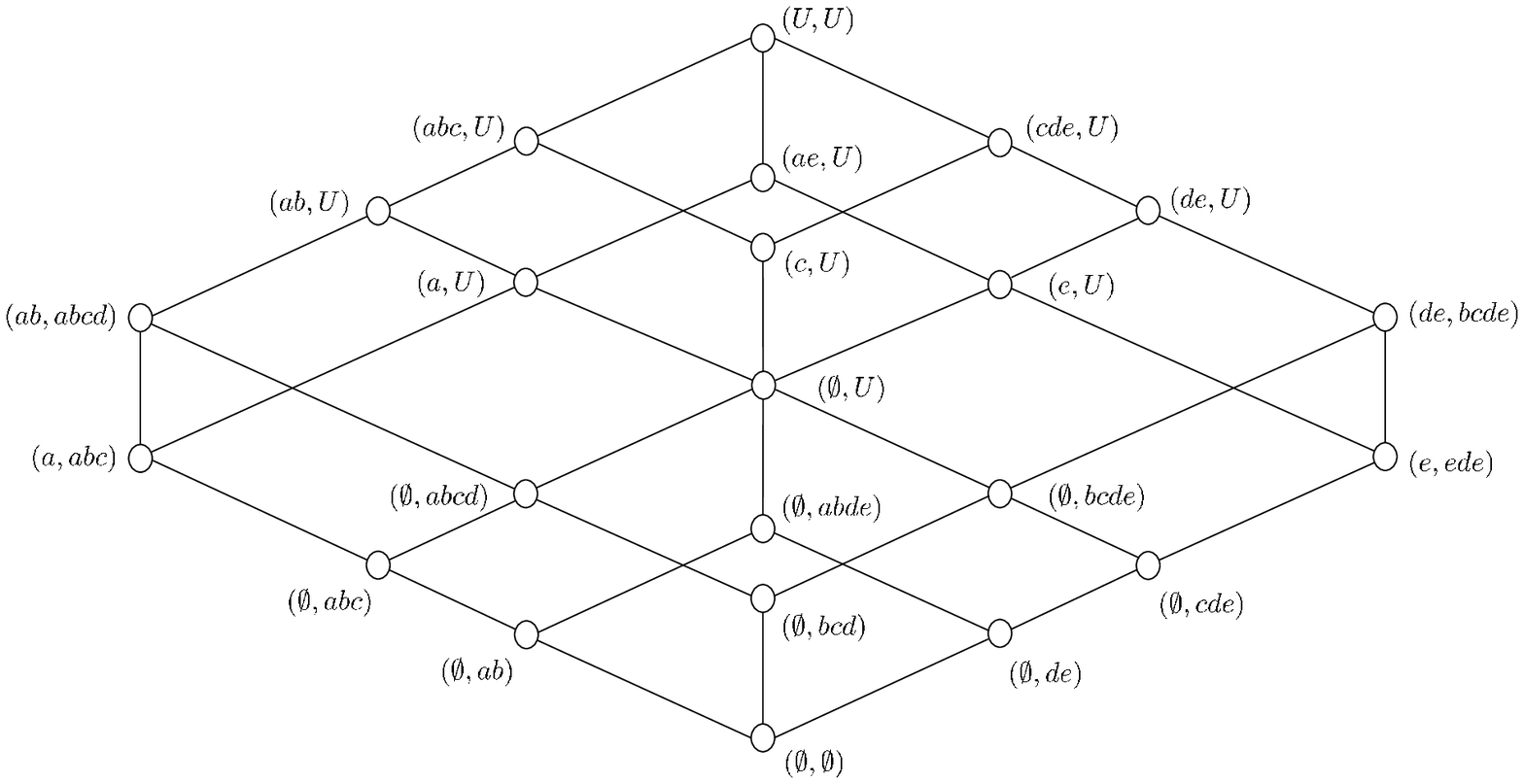} 
\caption{\label{Fig:Fig1}}
\end{figure}
For instance, the elements $(a, abc)$ and $(\emptyset, abcd)$ do not have a least upper bound. 
Similarly, $(ab, abcd)$ and $(a, U)$ do not have the greatest lower bound.
\end{example}

Next we consider completions of $\mathit{RS}$. Let us define the set 
\begin{equation}\label{EQ:DefineS}
 \mathcal{S} = \{x \in U \mid R(x)=\{x\}\},
\end{equation}
that is, $\mathcal{S}$ consists of such $x$'s that $R(x)$ is a singleton.
Then, $\{x\}^{\DOWN} =\{x\}$ for all $x\in \mathcal{S}$ and
$\mathcal{S}^\DOWN = \mathcal{S} = \mathcal{S}^\UP$. 
Clearly, for any $X \subseteq U$ and $x \in \mathcal{S}$,
\[
x\in X^\UP \iff R(x) \cap X \neq \emptyset \iff R(x)\subseteq X \iff x\in X^\DOWN.
\]
Hence, for all $x \in \mathcal{S}$, we have either $x\in X^{\DOWN}$ or $x \in X^{\UP c}$. 
This means that $\mathcal{S} \subseteq X^{\DOWN} \cup X^{\UP c}$ holds for any $X\subseteq U$.
We define the set of pairs
\[
\mathcal{I}(\mathit{RS}) = \{ (A,B) \in \wp(U)^{\DOWN} \times \wp(U)^{\UP}
\mid A^{\UP}\subseteq B^\DOWN \text{ and } \mathcal{S} \subseteq A\cup B^{c}\}.
\]
Because  $(X^{\DOWN})^{\UP} \subseteq X  \subseteq (X^{\UP})^{\DOWN}$ and 
$\mathcal{S} \subseteq X^\DOWN \cup X^{\UP c}$
for any $(X^\DOWN,X^\UP) \in \mathit{RS}$, we have $RS\subseteq\mathcal{I}(\mathit{RS})$.

The \emph{Dedekind--MacNeille completion} of an ordered set $P$ can 
be defined as the smallest complete lattice with $P$ order-embedded in it
(see \cite{DaPr02}, for example). In \cite{UmaThesis}, D.~Umadevi  
proved that for a reflexive relation $R$ on $U$, the Dedekind--MacNeille completion
of $\mathit{RS}$ is 
\[
\mathcal{DM}(\textit{RS}) = \{ (A,B) \in \wp(U)^\DOWN \times \wp(U)^\UP
\mid A^{\UP \vartriangle} \subseteq B \text{ and } A \cap \mathcal{S} = B \cap \mathcal{S}\},
\]
where $X^\vartriangle$ denotes the upper approximation of $X$ defined
in terms of the inverse $R^{-1}$ of the relation $R$, that is,
$X^{\vartriangle} = \{x\in U \mid R^{-1}(x) \cap X \neq \emptyset\}$.
If $R$ is a tolerance, we have $A^{\vartriangle} = A^{\UP}$ and
$A^{\UP\vartriangle} \subseteq B \iff A^{\UP\UP} \subseteq B 
\iff  A^{\UP} \subseteq B^{\DOWN}$ for any $A,B \subseteq U$. 
Additionally,
\begin{equation} \label{Eq:Congruence}
\mathcal{S} \subseteq A \cup B^{c} \iff \mathcal{S} \cap (B\setminus A) = \emptyset
\iff \mathcal{S} \cap B = \mathcal{S} \cap A. 
\end{equation}
Hence, for tolerances, we have 
$\mathcal{I}(\textit{RS})= \mathcal{DM}(\textit{RS})$, and consequently, 
$\mathit{RS} = \mathcal{I}(\mathit{RS})$ holds whenever $\mathit{RS}$ is a
complete lattice. In \cite{JPR12}, we proved that for any quasiorder $R$ on $U$, 
$\mathit{RS}$ is a complete, completely distributive lattice and
\[
\mathit{RS} = \{(A,B) \in \wp(U)^\DOWN \times \wp(U)^\UP \mid 
A \subseteq B \text{ and } \mathcal{S} \subseteq A \cup B^{c}\}.
\]
Therefore, $\mathcal{I}(\mathcal{RS})$ can be called the 
\emph{increasing representation of rough sets}.

A \textit{complete subdirect product} $\mathcal{L}$ of an indexed family of
complete lattices $\{L_i\}_{i \in I}$ is a complete sublattice of the direct product 
$\prod_{i \in I} L_i$ such that the canonical projections 
$\pi_i$ are all surjective, that is, $\pi_i(\mathcal{L}) = L_i$.
Note that the projections $\pi_i$ are complete lattice homomorphisms, 
that is, they preserve all meets and joins. 

\begin{proposition} \label{Prop:Completion}
Let $R$ be a tolerance on $U$.
\begin{enumerate}[\rm (a)]
\item  $\mathcal{I}(\mathit{RS})$ is a complete polarity sublattice of the polarity
lattice  $\wp(U)^\DOWN \times \wp(U)^\UP$.
\item $\mathcal{I}(\mathit{RS})$ is a complete subdirect product of  $\wp(U)^\DOWN$ and $\wp(U)^\UP$.
\end{enumerate}
\end{proposition}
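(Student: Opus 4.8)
The plan is to verify the two assertions directly from the defining conditions of $\mathcal{I}(\mathit{RS})$, exploiting that $\wp(U)^\DOWN \times \wp(U)^\UP$ is already known to be a complete polarity lattice with the meet and join formulas displayed just before \eqref{Eq:Negation}. For part (a), I would first check closure of $\mathcal{I}(\mathit{RS})$ under arbitrary meets and joins taken in $\wp(U)^\DOWN \times \wp(U)^\UP$. Given a family $(A_i,B_i)_{i\in I}$ in $\mathcal{I}(\mathit{RS})$, its meet is $\bigl(\bigcap_i A_i,(\bigcap_i B_i)^{\DOWN\UP}\bigr)$; I must show $\bigl(\bigcap_i A_i\bigr)^{\UP}\subseteq\bigl((\bigcap_i B_i)^{\DOWN\UP}\bigr)^{\DOWN}=(\bigcap_i B_i)^{\DOWN\UP\DOWN}=(\bigcap_i B_i)^{\DOWN}$, which follows since $\bigl(\bigcap_i A_i\bigr)^{\UP}\subseteq A_j^{\UP}\subseteq B_j^{\DOWN}$ for every $j$ and then intersecting over $j$; and I must check $\mathcal{S}\subseteq\bigl(\bigcap_i A_i\bigr)\cup\bigl((\bigcap_i B_i)^{\DOWN\UP}\bigr)^c$. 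The join case is dual, using $(\bigcup_i A_i)^{\UP\DOWN}$ in the first coordinate and $\bigcup_i B_i$ in the second. The singleton condition $\mathcal{S}\subseteq A\cup B^c$ is most conveniently handled through its reformulation $\mathcal{S}\cap B=\mathcal{S}\cap A$ from \eqref{Eq:Congruence}: under that form, the condition is visibly preserved by coordinatewise intersection and union provided I also know $\mathcal{S}\cap X^{\DOWN\UP}=\mathcal{S}\cap X$ and $\mathcal{S}\cap X^{\UP\DOWN}=\mathcal{S}\cap X$ for all $X\subseteq U$ — these I would establish from $\mathcal{S}^\DOWN=\mathcal{S}=\mathcal{S}^\UP$ together with the fact that for $x\in\mathcal{S}$, membership of $x$ in $X^\DOWN$, $X$, $X^\UP$, and hence in any further approximation, all coincide.

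Next, still within part (a), I would record that $\mathcal{I}(\mathit{RS})$ is closed under the De~Morgan operation ${\sim}$ of \eqref{Eq:Negation}: if $(A,B)\in\mathcal{I}(\mathit{RS})$, then ${\sim}(A,B)=(B^c,A^c)$, and one checks $B^c\in\wp(U)^\DOWN$, $A^c\in\wp(U)^\UP$ by duality of the approximation operators, that $(B^c)^{\UP}=B^{\DOWN c}\subseteq A^{\UP c}=(A^c)^{\DOWN}$ is equivalent to $A^{\UP}\subseteq B^{\DOWN}$, and that $\mathcal{S}\cap A^c=\mathcal{S}\cap B^c$ is equivalent to $\mathcal{S}\cap A=\mathcal{S}\cap B$. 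Together with the sublattice property this makes $\mathcal{I}(\mathit{RS})$ a complete polarity sublattice. Since the containment $\mathit{RS}\subseteq\mathcal{I}(\mathit{RS})$ was noted right after the definition of $\mathcal{I}(\mathit{RS})$, and $\mathcal{I}(\mathit{RS})$ is bounded by $(\emptyset,\emptyset)$ and $(U,U)$, nothing further is needed here.

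For part (b), I must show the two projections $\pi_1\colon\mathcal{I}(\mathit{RS})\to\wp(U)^\DOWN$ and $\pi_2\colon\mathcal{I}(\mathit{RS})\to\wp(U)^\UP$ are surjective; that they are complete homomorphisms is automatic once (a) gives that meets and joins in $\mathcal{I}(\mathit{RS})$ are computed coordinatewise in the product. For surjectivity of $\pi_1$: given $A\in\wp(U)^\DOWN$, I would exhibit an explicit partner $B\in\wp(U)^\UP$ with $(A,B)\in\mathcal{I}(\mathit{RS})$ — the natural candidate is $B=A^{\UP}$ (or, if the singleton condition needs adjusting, $B=A^{\UP}\cup(\mathcal{S}\cap A)$, though $A^{\UP}\supseteq A\supseteq\mathcal{S}\cap A$ already, so $B=A^{\UP}$ should suffice), and then $A^{\UP}\subseteq (A^{\UP})^{\DOWN}$ holds because $A=A^{\UP\DOWN}$ gives $A^{\UP}=A^{\UP\DOWN\UP}\subseteq\ldots$; more simply $A^{\UP}\subseteq B^{\DOWN}$ with $B=A^\UP$ reads $A^{\UP}\subseteq A^{\UP\DOWN}$, which is false in general, so the right choice is instead $B=A^{\UP\DOWN\UP}=A^{\DOWN\UP}$-style — here I expect the correct partner of $A$ to be $B$ determined so that $A^{\UP}\subseteq B^\DOWN$, e.g. $B=U$ fails the singleton clause, so the cleanest choice is to take the rough set $(X^\DOWN,X^\UP)$ with $X^\DOWN=A$, which exists since $A\in\wp(U)^\DOWN$, giving $\pi_1$ surjective via $\mathit{RS}$ itself; similarly $\pi_2$ is surjective because every $B\in\wp(U)^\UP$ equals $X^\UP$ for some $X$ and $(X^\DOWN,X^\UP)\in\mathit{RS}\subseteq\mathcal{I}(\mathit{RS})$. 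Thus surjectivity reduces to the already-noted fact $\mathit{RS}\subseteq\mathcal{I}(\mathit{RS})$ combined with $\wp(U)^\DOWN=\{X^\DOWN\}$ and $\wp(U)^\UP=\{X^\UP\}$.

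The main obstacle I anticipate is bookkeeping around the singleton set $\mathcal{S}$: making sure the condition $\mathcal{S}\subseteq A\cup B^c$ survives the closure operators $X\mapsto X^{\UP\DOWN}$ and $X\mapsto X^{\DOWN\UP}$ that appear in the product's join and meet. Once the auxiliary equalities $\mathcal{S}\cap X^{\UP\DOWN}=\mathcal{S}\cap X=\mathcal{S}\cap X^{\DOWN\UP}$ are in hand (which follow from $x\in\mathcal{S}\Rightarrow(x\in X^\DOWN\iff x\in X\iff x\in X^\UP)$), everything else is a routine application of the meet/join formulas and the duality $X^{c\UP}=X^{\DOWN c}$, so I would keep those verifications brief.
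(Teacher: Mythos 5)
Your proposal is correct and follows essentially the same route as the paper: verify closure of $\mathcal{I}(\mathit{RS})$ under the product's meets, handle joins via the De~Morgan operation $\sim$, and obtain surjectivity of the projections from $\mathit{RS}\subseteq\mathcal{I}(\mathit{RS})$. The only differences are cosmetic — you check the inclusion $(\bigcap_i A_i)^{\UP}\subseteq(\bigcap_i B_i)^{\DOWN}$ slightly more directly than the paper, and you handle the singleton clause via the invariance $\mathcal{S}\cap X^{\DOWN\UP}=\mathcal{S}\cap X=\mathcal{S}\cap X^{\UP\DOWN}$ where the paper argues by contradiction; both are valid.
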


\begin{proof}
(a) We first note that the map $\sim$ defined in \eqref{Eq:Negation} is a De~Morgan operation on 
$\mathcal{I}(\mathit{RS})$.
If $(A,B) \in \mathcal{I}(\mathit{RS})$, then $A^\UP \subseteq B^\DOWN$ implies
$B^{c \UP} = B^{\DOWN c} \subseteq A^{\UP c} = A^{c \DOWN}$. Additionally,
$\mathcal{S} \subseteq A \cup B^{c} = B^{c} \cup (A^c)^c$. So,
${\sim}(A,B) = (B^c,A^c) \in\mathcal{I}(\mathit{RS})$.

Let $\{(A_i,B_i)\}_{i \in I}\subseteq \mathcal{I}(\mathit{RS})$. 
Its meet defined in $\wp(U)^\DOWN \times \wp(U)^\UP$ is
\[ \bigwedge_{i \in I} (A_i,B_i)= 
\Big( \bigcap_{i \in I} A_i, \big ( \bigcap_{i \in I} B_{i} \big )^{\DOWN \UP} \Big) . \]
We show that this meet is in $\mathcal{I}(\mathit{RS})$. 
For all $i \in I$, we have ${A_i}^\UP \subseteq {B_i}^\DOWN$. 
Thus, for all $i \in I$, we have $A_i \subseteq {A_i}^{\UP \DOWN} \subseteq {B_i}^{\DOWN\DOWN}$, 
and $\bigcap_{i \in I} A_i \subseteq \bigcap_{i \in I} B_{i}^{\DOWN \DOWN} = 
(\bigcap_{i \in I} B_{i})^{\DOWN \DOWN}$. This implies
$(\bigcap_{i \in I} A_i)^\UP \subseteq (\bigcap_{i \in I} B_{i})^{\DOWN\DOWN \UP}
\subseteq (\bigcap_{i \in I} B_{i})^{\DOWN}
= \big((\bigcap_{i \in I} B_{i})^{\DOWN\UP}\big)^\DOWN$.
For the second part,
assume that $\mathcal{S} \nsubseteq \bigcap_{i \in I} A_i \cup 
(\bigcap_{i \in I} B_{i})^{\DOWN \UP c}$. This means that there
exists $x\in \mathcal{S}$ such that $x \notin \bigcap_{i \in I} A_i$
and $x \notin (\bigcap_{i \in I} B_i)^{\DOWN\UP c}$. Therefore,
there is $k \in I$ with $x \notin A_k$ and 
$x \in ( \bigcap_{i \in I} B_i )  ^{\DOWN\UP} \subseteq
\bigcap_{i \in I} B_i \subseteq B_k$. We get 
$x \notin A_{k} \cup {B_k}^{c}$, contradicting our assumption
$\mathcal{S} \subseteq A_i \cup {B_i}^c$ for all $i \in I$. 
Thus,  $\mathcal{S} \subseteq \bigcap_{i \in I} A_i \cup 
(\bigcap_{i \in I} B_{i})^{\DOWN \UP c}$ must hold.
Additionally, the join
\[ \bigvee_{i \in I} (A_i,B_i) = 
\Big (  \big (\bigcup_{i \in I} A_i \big )^{\UP \DOWN}, \bigcup_{i \in } B_i \Big )
\]
defined in $\wp(U)^\DOWN \times \wp(U)^\UP$ equals ${\sim} \bigwedge_{i\in I} {\sim}(A_i,B_i)$, which, 
by the above, belongs to $\mathcal{I}(\mathit{RS})$. Thus, $\mathcal{I}(\mathit{RS})$ is a 
complete sublattice of $\wp(U)^{\DOWN}\times\wp(U)^\UP$.

(b) The maps $\pi_{1} \colon (X^\DOWN, Y^\UP) \mapsto X^\DOWN$ and
$\pi_{2} \colon (X^\DOWN, Y^\UP) \mapsto Y^\UP$ are the canonical projections 
of the product  $\wp(U)^\DOWN \times \wp(U)^\UP$. Obviously,
their restrictions to $\mathcal{I}(\mathcal{RS})$ are surjective, because 
$\mathit{RS} \subseteq \mathcal{I}(\mathit{RS})$. Combined
with (a), this proves the claim.
\end{proof}

\begin{corollary} \label{Cor:Subdirect}
$\mathit{RS}$ is a complete lattice if and only if it is a complete subdirect 
product of the complete lattices $\wp(U)^\DOWN$ and $\wp(U)^\UP$.
\end{corollary}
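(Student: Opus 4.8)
The plan is to deduce this directly from Proposition~\ref{Prop:Completion} together with the fact, established in the discussion preceding the corollary, that for a tolerance $R$ the increasing representation $\mathcal{I}(\mathit{RS})$ coincides with the Dedekind--MacNeille completion $\mathcal{DM}(\mathit{RS})$ of $\mathit{RS}$, and hence $\mathit{RS} = \mathcal{I}(\mathit{RS})$ precisely when $\mathit{RS}$ is already a complete lattice. So the two directions are as follows.

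\emph{($\Leftarrow$)}\, If $\mathit{RS}$ is a complete subdirect product of $\wp(U)^\DOWN$ and $\wp(U)^\UP$, then by definition it is a complete sublattice of $\wp(U)^\DOWN \times \wp(U)^\UP$; in particular it is a complete lattice.

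\emph{($\Rightarrow$)}\, Suppose $\mathit{RS}$ is a complete lattice. Since $\mathcal{I}(\mathit{RS})$ is the Dedekind--MacNeille completion of $\mathit{RS}$ and $\mathit{RS} \subseteq \mathcal{I}(\mathit{RS})$, minimality of the completion forces $\mathit{RS} = \mathcal{I}(\mathit{RS})$. Now apply Proposition~\ref{Prop:Completion}(b): $\mathcal{I}(\mathit{RS})$ is a complete subdirect product of $\wp(U)^\DOWN$ and $\wp(U)^\UP$, hence so is $\mathit{RS}$.

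I expect no real obstacle here: the corollary is essentially a bookkeeping consequence of Proposition~\ref{Prop:Completion} and the identification $\mathcal{I}(\mathit{RS}) = \mathcal{DM}(\mathit{RS})$. The one point that deserves a sentence of care is why $\mathit{RS}$ being a complete lattice (as an abstract poset) forces the equality $\mathit{RS} = \mathcal{I}(\mathit{RS})$ rather than merely an abstract isomorphism: one uses that $\mathit{RS}$ is order-embedded in the complete lattice $\mathcal{I}(\mathit{RS})$ with the inherited order, that arbitrary meets and joins taken in $\mathcal{I}(\mathit{RS})$ of subsets of $\mathit{RS}$ land back in $\mathit{RS}$ once the latter is complete (since the Dedekind--MacNeille completion is join- and meet-dense over $\mathit{RS}$, every element of $\mathcal{I}(\mathit{RS})$ is both a join and a meet of elements of $\mathit{RS}$, and completeness of $\mathit{RS}$ then makes that element already lie in $\mathit{RS}$), and therefore the embedding is onto. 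Everything else is immediate from the cited results.
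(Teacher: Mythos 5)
Your proposal is correct and follows exactly the paper's intended route: the backward direction is immediate since a complete subdirect product is by definition a complete sublattice of the product, and the forward direction uses the identification $\mathcal{I}(\mathit{RS})=\mathcal{DM}(\mathit{RS})$ (so that completeness of $\mathit{RS}$ forces $\mathit{RS}=\mathcal{I}(\mathit{RS})$) together with Proposition~\ref{Prop:Completion}(b). Your extra remark on why abstract completeness yields the literal equality $\mathit{RS}=\mathcal{I}(\mathit{RS})$ (via join- and meet-density and preservation of existing joins and meets) is exactly the point the paper leaves implicit, and it is handled correctly.
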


\begin{remark}
The fact that for a tolerance $R$ on $U$, $\mathcal{I}(\mathit{RS})$ is the
Dedekind--MacNeille completion of $\mathit{RS}$ can be proved independently of 
\cite{UmaThesis} by showing that $\mathit{RS}$ is both join-dense and
meet-dense in $\mathcal{I}(\mathit{RS})$. It is known that a complete lattice $L$ is
the Dedekind--MacNeille completion of an ordered subset $P$ of $L$, whenever $P$ is
both join-dense and meet-dense in $L$, that is, every element of $L$ can be
represented as a join and a meet of some elements of $P$ 
(see e.g. \cite[Theorem~7.41]{DaPr02}). In fact, one can show that for any pair 
$(A,B)\in\mathcal{I}(\mathit{RS})$, we have
\begin{equation} \label{Eq:repre}
(A,B)= \bigvee \big ( \{ (R(x)^{\DOWN},R(x)^{\UP} \mid x\in A\} 
\cup \{ (\emptyset,R(x)) \mid x \in B^{\DOWN}\setminus A\} \big ).
\end{equation}
Trivially, the pairs $(R(x)^{\DOWN},R(x)^{\UP})$ are rough sets for every $x \in A$.
If $x \in B^\DOWN \setminus A$, then $x \notin \mathcal{S}$  by \eqref{Eq:Congruence}, and
hence $\{x\}^\DOWN = \emptyset$ and $(\{x\}^\DOWN,\{x\}^\UP) = (\emptyset,R(x))$ 
is a rough set. 
Thus, \eqref{Eq:repre} implies that $\mathit{RS}$ is join-dense in $\mathcal{I}(\mathit{RS})$. 
Because $\mathcal{I}(\mathit{RS})$ and $\mathit{RS}$ are self-dual by the map $\sim$, 
$\mathit{RS}$ is also meet-dense in $\mathcal{I}(\mathit{RS})$.
\end{remark}

By Corollary~\ref{Cor:Subdirect}, to show that  $\textit{RS}$ is a complete lattice
it is enough to prove that  $\textit{RS}$ is a complete sublattice of $\wp(U)^\DOWN \times \wp(U)^\UP$.
Additionally, since $\textit{RS}$ is a self-dual subset of the complete polarity lattice $\wp(U)^\DOWN \times \wp(U)^\UP$, 
it suffices to find for any $\mathcal{H} \subseteq \wp(U)$ a set $Z \subseteq U$ such that 
\begin{equation}\label{Eq:ExistZed}
Z^\DOWN =  \bigcap_{X \in \mathcal{H}} X^\DOWN \text{ \quad and \quad }  
Z^\UP =  \big (\bigcap_{X \in \mathcal{H}} X^\UP \big )^{\DOWN\UP}. 
\end{equation}
Observe that
\[  \big (\bigcap_{X \in \mathcal{H}} X \big )^{\DOWN\UP} =
\big (\bigcap_{X \in \mathcal{H}} X^\DOWN \big )^\UP = Z^{\DOWN\UP} \subseteq Z
 \subseteq Z^\UP =  \big (\bigcap_{X \in \mathcal{H}} X^\UP \big )^{\DOWN\UP}.
\]
So, we have a lower bound and an upper bound for this $Z$. Especially, concerning 
Lemma~\ref{Lem:ExistsS}, the interpretation is that
\[ T = \big (\bigcap_{X \in \mathcal{H}} X \big )^{\DOWN\UP} 
 \quad \mbox{and} \quad
   Y =  \big (\bigcap_{X \in \mathcal{H}} X^\UP \big )^{\DOWN\UP}.
\]

\begin{lemma} \label{Lem:ExistsS}
Let $Y,T\subseteq U$ be such that $Y \in \wp(U)^\UP$ and $T\subseteq Y^\DOWN$. 
If\/ $|R(x)| \geq2$ for all $x\in Y\setminus T^\UP$, then there exists a set 
$S\subseteq Y^\DOWN\setminus T$ such that $Y=S^\UP\cup T^\UP$  and
$R(y)\nsubseteq S\cup T$ for all $y\in S$.
\end{lemma}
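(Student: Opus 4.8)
The plan is to construct the set $S$ explicitly, by choosing, for each point $y$ that we are forced to ``cover'' on the upper side, a witness from $R(y)$ that lies in $Y^\DOWN$ but avoids putting $y$ into $S^\DOWN$ too early. First I would record the target: I need $S \subseteq Y^\DOWN \setminus T$ with $Y = S^\UP \cup T^\UP$ and $R(y) \nsubseteq S \cup T$ for every $y \in S$. Since $Y \in \wp(U)^\UP$ means $Y = Y^{\DOWN\UP} = R(Y^\DOWN)$, every point of $Y$ already lies in $R(z)$ for some $z \in Y^\DOWN$; the issue is only the points of $Y$ not already caught by $T^\UP = R(T)$, i.e.\ the points of $Y \setminus T^\UP$, and for each such point $y$ I must select some $z_y \in R(y) \cap Y^\DOWN$. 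Note $z_y \in R(y)$ with $y \notin R(T)$ forces $z_y \notin T$, so automatically $z_y \in Y^\DOWN \setminus T$, which is what we want for membership. I would let $S_0 = \{ z_y \mid y \in Y \setminus T^\UP \}$ be a first approximation, where each $z_y$ is chosen in $R(y) \cap Y^\DOWN$ (nonempty because $y \in Y = R(Y^\DOWN)$ and $R$ is symmetric). Then $S_0^\UP \cup T^\UP \supseteq (Y \setminus T^\UP) \cup T^\UP = Y$, and since $S_0 \subseteq Y^\DOWN$ gives $S_0^\UP \subseteq Y^{\DOWN\UP} = Y$, we get $Y = S_0^\UP \cup T^\UP$ with the right size.

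The remaining and genuinely delicate requirement is $R(z) \nsubseteq S \cup T$ for every $z \in S$: I must make sure no chosen witness has its entire $R$-neighbourhood swallowed by $S \cup T$. This is where the hypothesis $|R(x)| \geq 2$ for all $x \in Y \setminus T^\UP$ enters, together with the freedom in choosing the $z_y$'s. The key observation is that if $z \in S \subseteq Y^\DOWN$ and $R(z) \subseteq S \cup T$, then since $z \notin T$ (as $z \in Y^\DOWN \setminus T$) there is some $z' \in R(z) \cap S$, $z' \neq z$; and by symmetry $z$ and $z'$ are ``mutually chosen'' — each could serve as the other's witness. The strategy is to choose the witness map prudently so as to break such bad configurations: I would argue that one can always select the $z_y$ so that the map $y \mapsto z_y$ has no ``tight cycle'', or more directly, I would perform a removal/trimming step — start from $S_0$ and delete any element $z$ with $R(z) \subseteq S_0 \cup T$, checking that deletion does not destroy $Y = S^\UP \cup T^\UP$ because the point $y$ that $z$ was witnessing can be re-witnessed by another element of $R(y)$ (here $|R(y)| \geq 2$ is used to guarantee an alternative), or $y$ was in fact already covered. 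One must verify this trimming terminates (it does, by finiteness, if $U$ is finite; in the infinite case one invokes a Zorn's-lemma argument on the family of ``good partial witness sets'' ordered by inclusion, taking $S$ minimal such that $S^\UP \cup T^\UP = Y$).

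I expect the main obstacle to be precisely this last point: showing that a minimal (with respect to inclusion) set $S \subseteq Y^\DOWN \setminus T$ satisfying $S^\UP \cup T^\UP = Y$ automatically satisfies $R(z) \nsubseteq S \cup T$ for all $z \in S$. The argument should run: suppose $z \in S$ with $R(z) \subseteq S \cup T$; by minimality $S \setminus \{z\}$ fails, so there is $y \in Y$ with $y \in R(z)$ but $y \notin R(S \setminus \{z\}) \cup R(T)$; in particular $y \in R(z)$, $y \notin R(T)$, so $y \in Y \setminus T^\UP$, hence $|R(y)| \geq 2$, so pick $w \in R(y)$, $w \neq z$; since $w \in R(y) \subseteq \cdots$ one shows $w \in Y^\DOWN$ (using $y \in Y$ and a neighbourhood computation) and $w \notin S$ (else $y \in R(S\setminus\{z\})$), and $w \notin T$ (else $y \in R(T)$) — but then replacing $z$ by $w$, i.e.\ passing to $(S \setminus \{z\}) \cup \{w\}$, is again a valid covering set, and iterating this exchange within the finite/Noetherian setting contradicts minimality, or directly contradicts the choice of $S$ as minimal since one can show $S \setminus \{z\}$ together with a suitable $w$ is strictly smaller in an appropriate well-founded measure. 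The careful bookkeeping of which points remain covered after each exchange, and ensuring the process terminates, is the technical heart; everything else (the neighbourhood identities $Y = R(Y^\DOWN)$, $S \subseteq Y^\DOWN \Rightarrow S^\UP \subseteq Y$, and $z \in Y^\DOWN \setminus T$) is routine from the Galois-connection properties recalled in Section~\ref{Sec:ToleranceApproximations}.
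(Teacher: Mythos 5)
Your construction of the initial witness set $S_0$ is fine, and your key idea --- pass to a minimal $S\subseteq Y^\DOWN\setminus T$ with $S^\UP\cup T^\UP=Y$ and use $|R(x)|\geq 2$ to rule out $R(z)\subseteq S\cup T$ --- is sound and, once a minimal $S$ is in hand, much simpler than you make it. Indeed, if $S$ is minimal and $z\in S$ had $R(z)\subseteq S\cup T$, pick $y\in Y$ not covered by $(S\setminus\{z\})^\UP\cup T^\UP$; then $y\in R(z)\subseteq S\cup T$ and $y\notin T$, so $y\in S$, and $y\neq z$ would let $y$ cover itself, so $y=z$. Now $z\notin(S\setminus\{z\})^\UP\cup T^\UP$ means $R(z)\cap(S\setminus\{z\})=\emptyset$ and $R(z)\cap T=\emptyset$, which together with $R(z)\subseteq S\cup T$ forces $R(z)=\{z\}$, contradicting $|R(z)|\geq 2$ since $z=y\in Y\setminus T^\UP$. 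No exchange or iteration is needed, and for \emph{finite} $U$ this yields a complete and arguably cleaner proof than the one in the paper.

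The genuine gap is the existence of a minimal (or even merely element-wise irredundant) covering set $S$ when $U$ is infinite --- and the lemma is applied in Theorems \ref{Thm:Dist2Lattice} and \ref{Thm:Main} to arbitrary $U$, so the infinite case cannot be dismissed. Zorn's lemma does not apply to the family $\{S\subseteq Y^\DOWN\setminus T \mid S^\UP\cup T^\UP=Y\}$ ordered by reverse inclusion: a decreasing chain of covering sets need not have a covering lower bound, because a point $y$ with infinitely many witnesses $w_1,w_2,\ldots\in R(y)$ is covered by every set $S_k=\{w_n \mid n\geq k\}$ in a chain whose intersection is empty. So your proposed route to minimality breaks down, and it is well known that infinite covers need not admit irredundant subcovers at all; you would need a separate transfinite or compactness argument to produce an $S$ in which every element is non-removable, and you do not supply one. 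The paper avoids this issue entirely by an explicit construction: it restricts $R$ to $Y\setminus T$, partitions the relevant elements according to the \emph{parity} of their $\rho$-path distance to $Y\setminus Y^\DOWN$ (respectively, to a transversal $C$ of the remaining $\overline{\rho}$-classes not absorbed by $T^\UP$), and takes $S$ to be the union of the odd-distance layers; every $s\in S$ then has a neighbour in the corresponding even layer, which is disjoint from $S\cup T$ by construction (Lemma \ref{Lem:OddEven}). That argument needs no minimality and no finiteness. As written, your proof is therefore complete only for finite $U$.
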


\begin{proof}
Since $T\subseteq Y^{\DOWN} \subseteq Y$, the set $Y \setminus T$ is a disjoint union of 
$Y \setminus Y^{\DOWN}$ and $Y^{\DOWN} \setminus T$, and hence
\begin{equation} \label{Eq:DifferenceY}
(Y \setminus T)  \setminus (Y\setminus Y^{\DOWN}) = Y^{\DOWN} \setminus T.
\end{equation}
Clearly, $T^{\UP}\subseteq Y^{\DOWN\UP}=Y$, because $Y \in \wp(U)^{\UP}$. 
If $T^{\UP}=Y$, then our assertion is satisfied trivially with
$S = \emptyset$. Thus, we may suppose $T^{\UP}\subset Y$, which yields
$Y \setminus T \neq \emptyset$, because $T \subseteq$ $T^{\UP}$. Let
$\rho$ denote the restriction of $R$ to the set $Y \setminus T$. Then, $\rho$ is
a tolerance, and its transitive closure $\overline{\rho}$ is an equivalence on
$Y\setminus T$. Now, consider the sets
\begin{align*}
A & =\{ y \in Y\setminus T \mid \overline{\rho}(y) \cap (Y\setminus Y^{\DOWN})\neq\emptyset\}; \\
B &= \{y\in Y\setminus T\mid\overline{\rho}(y)\cap(Y\setminus
Y^{\DOWN})=\emptyset \text{ \ and \ } \overline{\rho}(y) \nsubseteq T^{\UP}\}.
\end{align*}
Then, $(Y\setminus Y^{\DOWN}) \cap B \subseteq A \cap B = \emptyset$, and by \eqref{Eq:DifferenceY} we obtain
\[
B \subseteq (Y\setminus T) \setminus (Y\setminus Y^{\DOWN}) = Y^{\DOWN}\setminus T.
\]

We apply Lemma~\ref{Lem:OddEven} with the tolerance $\rho$ and the sets 
$U = Y\setminus T$, $X = Y\setminus Y^{\DOWN}$, and 
$\overline{X} = \overline{\rho}(Y\setminus Y^{\DOWN}) = A$. 
We obtain two disjoint sets $(Y\setminus Y^{\DOWN})_{\text{odd}}$ 
and $(Y\setminus Y^{\DOWN})_{\text{even}}$ such that 
$(Y \setminus Y^{\DOWN})_{\text{odd}} \cup (Y\setminus Y^{\DOWN})_{\text{even}} = A$, and
\begin{align}
&Y\setminus Y^{\DOWN} \subseteq (Y\setminus Y^{\DOWN})_{\text{even}} \label{Eq:4A}; \\
&(Y\setminus Y^{\DOWN})_{\text{odd}}\subseteq \rho((Y\setminus Y^{\DOWN})_{\text{even}});  \label{Eq:4B} \\
&A \setminus (Y\setminus Y^{\DOWN}) \subseteq \rho((Y\setminus Y^{\DOWN})_{\text{odd}}).  \label{Eq:4C}
\end{align}

Next, let $\Pi=\{H_{k}\mid k\in K\}$ be the partition induced by the equivalence
$\overline{\rho}$ on $B$. Note that  $\overline{\rho}(B)=B$, 
because $x \in B$ and $\overline{\rho}(x) = \overline{\rho}(y)$ imply $y \in B$.
For each $k\in K$, we may select an element $c_{k}\in H_{k} \subseteq B$ 
such that $c_{k} \notin T^{\UP}$. This is because $H_{k}=\overline{\rho}(b)$ for some $b \in B$ 
and $B$ was defined so that $\overline{\rho}(b) \nsubseteq T^{\UP}$ for all $b\in B$. 
Denote the set of all these elements by $C$, that is,
\[
C=\{c_{k}\mid k\in K\}.
\]
Then, $C \subseteq B \setminus T^{\UP} \subseteq Y^{\DOWN} \setminus T^{\UP}$. 
Observe that $C \subseteq \rho(B\setminus C)$. Indeed, 
$C \subseteq Y \setminus T^{\UP}$ yields $|R(x)| \geq 2$ for all $x\in C$ by our
assumption. Thus, for each $x \in C$ there is an element $y \neq x$ with $x \, R \, y$.
Then, $y \in C^{\UP} \subseteq Y^{\DOWN\UP} \subseteq Y$, and $x \notin T^{\UP}$ yields 
$y\notin T$. Hence, $x,y \in Y \setminus T$ and $x \, \rho \, y$ holds also. 
This implies $y \in \overline{\rho}(x) \subseteq B$. Since $x \in C$ is the unique element 
picked from the set $\overline{\rho}(x) \in \Pi$, $y \neq x$ implies 
$y \in B\setminus C$. This proves $C \subseteq\rho(B\setminus C)$.

By applying  Lemma~\ref{Lem:OddEven} again with the tolerance $\rho$ and the sets 
$X = C \subseteq Y \setminus T$ and $\overline{X} = B$, we obtain two disjoint sets 
$C_{\text{odd}}$ and $C_{\text{even}}$ such that $C_{\text{odd}} \cup C_{\text{even}} = B$. 
Because $C \subseteq \rho(B\setminus C)$, we have
\begin{equation}\label{Eq:4D}
\rho(C_{\text{odd}})=\rho(C_{\text{even}})=B.
\end{equation}
Finally, consider the set
\[
S=(Y\setminus Y^{\DOWN})_{\text{odd}}\cup C_\text{odd}.
\]
We prove that $S$ has the required properties, that is, (i) $S\subseteq Y^{\DOWN}\setminus T$, 
(ii) $S^{\UP}\cup T^{\UP}=Y$, and (iii) $R(y) \nsubseteq S\cup T$ for all $y\in S$.

(i) Obviously, $C_\text{odd} \subseteq B \subseteq Y^{\DOWN} \setminus T$ and
$(Y\setminus Y^{\DOWN})_\text{odd} \subseteq A \subseteq Y\setminus T$.  
By \eqref{Eq:4A}, 
\[ (Y\setminus Y^{\DOWN})_\text{odd} \cap (Y\setminus Y^{\DOWN})
\subseteq (Y\setminus Y^{\DOWN})_\text{odd} \cap (Y\setminus Y^{\DOWN})_\text{even}=\emptyset. 
\]
Then, \eqref{Eq:DifferenceY} yields 
$(Y \setminus Y^{\DOWN})_\text{odd} \subseteq Y^{\DOWN} \setminus T$, and  
$S = (Y\setminus Y^{\DOWN})_{\text{odd}}\cup C_\text{odd} \subseteq Y^{\DOWN}\setminus T$.

(ii) Since $S,T\subseteq Y^{\DOWN}$, we have 
$S^{\UP} \cup T^{\UP} = (S\cup T)^{\UP}\subseteq Y^{\DOWN\UP}\subseteq Y$. 
For the other direction, assume $x \in Y\setminus T^{\UP} \subseteq Y \setminus T$. Then, 
either $\overline{\rho}(x) \cap (Y\setminus Y^{\DOWN}) =\emptyset$ or
$\overline{\rho}(x) \cap (Y\setminus Y^{\DOWN}) \neq \emptyset$ holds. 
Because $x \notin T^\UP$, we have $\overline{\rho}(x) \nsubseteq T^{\UP}$.
Therefore, if  $\overline{\rho}(x) \cap (Y\setminus Y^{\DOWN}) =\emptyset$, then
$x \in B$. Since $B = \rho(C_{\text{odd}})$ by \eqref{Eq:4D}
and $\rho(C_{\text{odd}})\subseteq S \subseteq S^{\UP}$, 
we get $x \in S^\UP$. If $\overline{\rho}(x) \cap (Y\setminus Y^{\DOWN}) \neq \emptyset$, then 
$x \in A$, and either $x \in Y \setminus Y^{\DOWN}$ or $x \notin Y\setminus Y^{\DOWN}$. 

If $x\in Y\setminus Y^{\DOWN}$, then $x\in Y = Y^{\DOWN\UP}$ yields that 
$x \, R \, y$ for some $y \in Y^{\DOWN}$. 
Since $x \notin T^{\UP}$, we have $y \notin T$ and $y \in Y \setminus T$. 
By $x,y\in Y \setminus T$, we obtain $x\, \rho \, y$. The facts
$x \in Y \setminus Y^{\DOWN}$ and $y \notin Y \setminus Y^{\DOWN}$ imply 
$ y \in(Y\setminus Y^{\DOWN})^{1} \subseteq (Y\setminus Y^{\DOWN})_{\text{odd}}$, 
which gives $x \in \rho((Y\setminus Y^{\DOWN })_{\text{odd}}) \subseteq 
((Y\setminus Y^{\DOWN})_{\text{odd}})^{\UP}\subseteq S^{\UP}$. 
If $x\notin Y\setminus Y^{\DOWN}$, then
\[ x \in A \setminus (Y \setminus Y^{\DOWN}) \subseteq \rho((Y\setminus Y^{\DOWN})_\text{odd})
\subseteq((Y\setminus Y^{\DOWN})_\text{odd})^{\UP} \subseteq S^{\UP}
\] 
by \eqref{Eq:4C}.
As we obtained $x \in S^{\UP}$ in all possible cases, 
$Y \subseteq S^{\UP}\cup T^{\UP}$ holds.

(iii) Let $x\in S = (Y\setminus Y^{\DOWN})_\text{odd} \cup C_\text{odd}$. 
Since $(Y\setminus Y^{\DOWN})_\text{odd} \cap C_\text{odd} \subseteq A \cap B = \emptyset$, 
either $x \in(Y \setminus Y^{\DOWN})_\text{odd}$ or $x\in C_\text{odd}$. 
In the first case, $x \in \rho((Y\setminus Y^{\DOWN})_\text{even})$ by \eqref{Eq:4B}. 
Hence, there is an element  $y \in (Y\setminus Y^{\DOWN})_\text{even}$ with 
$(x,y) \in \rho \subseteq R$, that is, $y\in R(x)$. 
The inclusion $(Y\setminus Y^{\DOWN})_\text{even} \subseteq  A \subseteq Y\setminus T$ gives 
$y \notin T$. Because 
\begin{align*}
S \cap (Y\setminus Y^{\DOWN})_\text{even} &= ((Y \setminus Y^{\DOWN})_\text{odd} 
\cap (Y\setminus Y^{\DOWN})_\text{even}) \cup (C_\text{odd} \cap (Y\setminus Y^{\DOWN})_\text{even})\\
& \subseteq \emptyset \cup(B\cap A) = \emptyset,
\end{align*} 
we get $y\notin S$. Thus, $R(x)\nsubseteq S\cup T$.
If $x\in C_{\text{odd}}$, then $x \in B = \rho(C_{\text{even}})$ by \eqref{Eq:4D}.
Hence, there is $y\in C_{\text{even}}$  with $(x,y) \in \rho \subseteq R$, that is, $y\in R(x)$. 
Since $C_\text{even} \subseteq B \subseteq Y \setminus T$, we get $y \notin T$. \ Clearly, 
$y \notin C_\text{odd}$ and 
$C_\text{even} \cap (Y\setminus Y^{\DOWN})_\text{odd} \subseteq B \cap A=\emptyset$
implies $y \notin(Y\setminus Y^{\DOWN})_\text{odd}$. Hence, 
$y \notin C_{\text{odd}} \cup(Y\setminus Y^{\DOWN})_\text{odd}=S$, 
and so $R(x) \nsubseteq S \cup T$.
\end{proof}

An element $x$ of a complete lattice $L$ is said to be \emph{compact} if for every subset
$S$ of $L$, $x \leq \bigvee S$ implies $x \leq \bigvee F$ for some finite subset $F$ of $S$.
A complete lattice is \emph{algebraic} if its every element can be given
as a join of compact elements.

\begin{theorem} \label{Thm:Dist2Lattice}
Let $R$ be a tolerance on $U$. Then $\mathit{RS}$ is an algebraic completely distributive
lattice if and only if $R$ is induced by an irredundant covering of $U$.
\end{theorem}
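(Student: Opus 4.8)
The plan is to prove the two implications separately; the forward one is short, and the converse is where the work lies. Suppose first that $\mathit{RS}$ is an algebraic completely distributive lattice. Being in particular complete, by Corollary~\ref{Cor:Subdirect} it is a complete subdirect product of $\wp(U)^\DOWN$ and $\wp(U)^\UP$, so the projections $\pi_1,\pi_2$ are surjective complete lattice homomorphisms. Since the complete-distributivity law is preserved under surjective complete homomorphisms (lift a doubly indexed family through $\pi_i$, apply the law in $\mathit{RS}$, and project it back down), $\wp(U)^\DOWN$ and $\wp(U)^\UP$ are completely distributive, and Proposition~\ref{Prop:ComplDistributive} then yields that $R$ is induced by an irredundant covering of $U$; algebraicity of $\mathit{RS}$ is not used here.

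For the converse, assume $R$ is induced by an irredundant covering, and first establish that $\mathit{RS}$ is a complete lattice. Since $\mathit{RS}$ is a self-dual subset of the polarity lattice $\wp(U)^\DOWN\times\wp(U)^\UP$, by the reduction preceding Lemma~\ref{Lem:ExistsS} it suffices to find, for each $\mathcal{H}\subseteq\wp(U)$, a set $Z$ with $Z^\DOWN=\bigcap_{X\in\mathcal{H}}X^\DOWN$ and $Z^\UP=(\bigcap_{X\in\mathcal{H}}X^\UP)^{\DOWN\UP}$. Put $Y=(\bigcap_{X\in\mathcal{H}}X^\UP)^{\DOWN\UP}$ and $T=(\bigcap_{X\in\mathcal{H}}X)^{\DOWN\UP}$, so that $T^\DOWN=\bigcap_{X\in\mathcal{H}}X^\DOWN$ and $(T^\DOWN)^\UP=T$. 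The hypotheses of Lemma~\ref{Lem:ExistsS} hold: $Y\in\wp(U)^\UP$ is clear, $T\subseteq Y^\DOWN$ follows from $X\subseteq X^{\UP\DOWN}$ and the Galois connection $({^\UP},{^\DOWN})$, and since $\mathcal{S}\subseteq X^\DOWN\cup X^{\UP c}$ and $Y\subseteq\bigcap_{X}X^\UP$ we get $\mathcal{S}\cap Y\subseteq T^\DOWN\subseteq T^\UP$, whence $|R(x)|\geq 2$ for all $x\in Y\setminus T^\UP$. Lemma~\ref{Lem:ExistsS} then gives $S\subseteq Y^\DOWN\setminus T$ with $S^\UP\cup T^\UP=Y$ and $R(y)\nsubseteq S\cup T$ for all $y\in S$. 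Set $Z=S\cup T$; then $Z^\UP=S^\UP\cup T^\UP=Y$, and the remaining point $Z^\DOWN=T^\DOWN$ is where the hypothesis really bites: if $R(z)\subseteq S\cup T$ and $w\in R(z)$, suppose $w\in S$; by Theorem~\ref{Thm:DC}(b) there is $d$ with $R(d)\subseteq R(z)\cap R(w)$, so $d\in S\cup T$, and $d\in S$ would contradict $R(d)\subseteq R(z)\subseteq S\cup T$; hence $d\in T=(T^\DOWN)^\UP$, so there is $e\in R(d)\cap T^\DOWN\subseteq R(w)\cap T^\DOWN$, whence $w\in(T^\DOWN)^\UP=T$, contradicting $w\in S$. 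Thus $R(z)\subseteq T$, so $Z^\DOWN=T^\DOWN$, and $\mathit{RS}$ is a complete sublattice of $\wp(U)^\DOWN\times\wp(U)^\UP$; in particular $\mathit{RS}=\mathcal{I}(\mathit{RS})$.

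Complete distributivity of $\mathit{RS}$ is then immediate, since $\wp(U)^\DOWN$ and $\wp(U)^\UP$ are completely distributive by Proposition~\ref{Prop:ComplDistributive} and complete distributivity passes to direct products and to complete sublattices. For algebraicity I would use the atomistic Boolean structure from Proposition~\ref{Prop:Boolean}: the rough sets $(R(e)^\DOWN,R(e)^\UP)$ with $R(e)$ a block, and $(\emptyset,R(d))$ with $R(d)$ a block and $|R(d)|\geq 2$ (the latter being the rough set of $\{d\}$), are compact in $\mathit{RS}$. Indeed, if such an element lies below $\bigvee_{s\in S}(A_s,B_s)$ for $S\subseteq\mathit{RS}$, its first coordinate — an atom of $\wp(U)^\DOWN$, or $\emptyset$ — lies below a single $A_{s_0}$, and then $A_{s_0}^\UP\subseteq B_{s_0}^\DOWN$ forces the second coordinate below $B_{s_0}$ as well, so the whole element lies below $(A_{s_0},B_{s_0})$. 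By \eqref{Eq:repre} every rough set is a join of elements $(R(x)^\DOWN,R(x)^\UP)$ and $(\emptyset,R(x))$; using atomisticity, $(\emptyset,R(x))$ (for $x\notin\mathcal{S}$) is the join over blocks $R(d)\subseteq R(x)$ of the compact $(\emptyset,R(d))$, and $(R(x)^\DOWN,R(x)^\UP)$ is the join over blocks $R(e)$ with $R(e)^\DOWN\subseteq R(x)^\DOWN$ of the compact $(R(e)^\DOWN,R(e)^\UP)$ — here Theorem~\ref{Thm:DC}(b) together with Remark~\ref{Rem:Block} supplies, for each $y\in R(x)$, a block containing $y$ and contained in $R(x)\cap R(y)$, which is exactly what makes these second coordinates exhaust $R(x)^\UP$. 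Hence $\mathit{RS}$ is algebraic.

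I expect the completeness step to be the main obstacle: producing a $Z$ with the correct upper approximation is routine, but identifying $Z^\DOWN$ with $\bigcap_{X\in\mathcal{H}}X^\DOWN$ requires the full strength of Lemma~\ref{Lem:ExistsS} together with Theorem~\ref{Thm:DC}(b), and this is the only point at which the irredundant-covering assumption genuinely enters the converse. The algebraicity argument, by contrast, is essentially bookkeeping once one observes that in $\mathcal{I}(\mathit{RS})$ the inequality $A^\UP\subseteq B^\DOWN$ lets the upper component of a block rough set come along for free, so that these rough sets are compact even though their upper components need not be compact in the ambient product.
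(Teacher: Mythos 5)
Your forward direction and your completeness argument follow the paper's route: the same reduction to finding $Z$ with $Z^\DOWN=\bigcap_{X\in\mathcal{H}}X^\DOWN$ and $Z^\UP=(\bigcap_{X\in\mathcal{H}}X^\UP)^{\DOWN\UP}$, the same sets $T$, $Y$, $S$ from Lemma~\ref{Lem:ExistsS}, and the same choice $Z=S\cup T$. Your verification that $Z^\DOWN=T^\DOWN$ is correct and is in fact a slightly leaner version of the paper's: the paper introduces the auxiliary set $V=\{v\in T\mid R(v)\nsubseteq T \text{ and } R(v)\subseteq S\cup T\}$ and excludes $c_v\in T$ by invoking the clause ``$R(c_v)\subseteq R(x)$ for all $x\,R\,c_v$'' of Theorem~\ref{Thm:DC}(b), whereas you exclude $d\in S$ by playing $R(d)\subseteq R(z)\subseteq S\cup T$ against the conclusion of Lemma~\ref{Lem:ExistsS}, and then exclude $d\in T$ using only $R(d)\subseteq R(w)$ and $T=T^{\DOWN\UP}$; your version uses only the existence of $d$ with $R(d)\subseteq R(z)\cap R(w)$. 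The hypothesis checks for Lemma~\ref{Lem:ExistsS} are also as in the paper.

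Where you diverge is algebraicity, and there your compactness justification has a genuine gap. The step ``$A_{s_0}^\UP\subseteq B_{s_0}^\DOWN$ forces the second coordinate below $B_{s_0}$'' does not do what you need. For $(R(e)^\DOWN,R(e)^\UP)$ it yields only $R(e)^{\DOWN\UP}\subseteq A_{s_0}^\UP\subseteq B_{s_0}$, and $R(e)^{\DOWN\UP}$ can be strictly smaller than $R(e)^\UP$ even for a block: with $U=\{1,2,3\}$ and the irredundant covering $\{\{1,2\},\{2,3\}\}$ one has the block $R(1)=\{1,2\}$ with $R(1)^{\DOWN\UP}=\{1,2\}$ but $R(1)^\UP=U$. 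For $(\emptyset,R(d))$ the first coordinate is $\emptyset$ and lies below \emph{every} $A_s$, so no $s_0$ is singled out at all. Both families are nevertheless compact, for different reasons than you give: for $(\emptyset,R(d))$, because $R(d)$ is an atom of the complete atomistic Boolean lattice $\wp(U)^\UP$, hence completely join-prime, and second coordinates join by plain union; for $(R(e)^\DOWN,R(e)^\UP)$, because once $R(e)^\DOWN\subseteq A_{s_0}=X_{s_0}^\DOWN$, the membership $e\in R(e)^\DOWN$ gives $R(e)\subseteq X_{s_0}$ and hence $R(e)^\UP\subseteq X_{s_0}^\UP=B_{s_0}$. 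With that repair your generation argument (via \eqref{Eq:repre}, atomisticity, and Theorem~\ref{Thm:DC}(c)) goes through; note, however, that the paper avoids all of this by observing that a complete subdirect product of two completely distributive algebraic lattices is itself completely distributive and algebraic.
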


\begin{proof}
Suppose that $\mathit{RS}$ is an algebraic completely distributive lattice. 
Then, $\mathit{RS}$ is a complete lattice and, by Corollary~\ref{Cor:Subdirect}, 
it is a complete subdirect product of the  lattices $\wp(U)^\DOWN$ and $\wp(U)^\UP$. 
Since $\wp(U)^\DOWN$ is the image of $\mathit{RS}$ under the complete lattice-homomorphism $\pi_{1}$, 
the lattice $\wp(U)^\DOWN$ is also completely distributive. 
Hence, according to Proposition~\ref{Prop:ComplDistributive}, $R$ is induced
by an irredundant covering of $U$.

Conversely, let $R$ be a tolerance induced by an irredundant covering of $U$.
Then $R$ satisfies also condition (b) of Theorem~\ref{Thm:DC}. First, we show that 
$\textit{RS}$ is a complete lattice. Let $\mathcal{H} \subseteq \wp(U)$.
By \eqref{Eq:ExistZed}, it is enough to show that there exists a set 
$Z \subseteq U$ such that
\begin{equation}\label{Eq:MEET}
(Z^\DOWN, Z^\UP) =
\Big ( \bigcap_{X \in \mathcal{H}} X^\DOWN,  \big (\bigcap_{X \in \mathcal{H}} X^\UP \big )^{\DOWN\UP} \Big ).
\end{equation}
Let us first set 
\begin{equation}\label{Eq:DefT+Y}
 T = \big (\bigcap_{X \in \mathcal{H}} X \big )^{\DOWN\UP} 
 \quad \mbox{and} \quad
   Y =  \big (\bigcap_{X \in \mathcal{H}} X^\UP \big )^{\DOWN\UP}.
\end{equation}
Using the properties of $^\UP$ and $^\DOWN$, it is clear that  $T = T^{\DOWN\UP}$, $Y = Y^{\DOWN\UP}$, and
\[ T\subseteq \bigcap_{X \in \mathcal{H}} X \subseteq \bigcap_{X\in\mathcal{H}} X^{\UP\DOWN} = 
\Big ( \bigcap_{X \in \mathcal{H}} X^{\UP} \Big )^{\DOWN} = \Big (\bigcap_{X\in\mathcal{H}} X^{\UP} \Big ) ^{\DOWN\UP\DOWN} 
= Y^{\DOWN}.
\]
In addition, $T^\DOWN = \big (\bigcap_{X \in \mathcal{H}} X \big )^{\DOWN}$ and \
$T^\UP \subseteq Y^{\DOWN\UP} = Y$.
Suppose that $x \in Y \setminus T^\UP$ and $|R(x)| = 1$. Since
$R(x) = \{x\}$, $x \in Y$ implies that $x \in X$ for all $X \in \mathcal{H}$,
from which we get $x \in T^\UP$, a contradiction. Thus, we must have 
$|R(x)| \geq 2$ for all $x \in Y \setminus T^\UP$.
Now, we may apply Lemma~\ref{Lem:ExistsS} with the sets $T$ and $Y$, and this yields
that there exists a set 
\begin{equation}\label{Eq:DefS}
S \subseteq Y^\DOWN\setminus T \text{ with } S^\UP\cup T^\UP = Y
\text{ and } R(y) \nsubseteq S\cup T \text{ for all } y\in S. 
\end{equation}
Let us define the set
\[
V = \{v\in T\mid R(v) \nsubseteq T \mbox{ and } R(v) \subseteq S\cup T\}.
\]

First, we prove that if $V = \emptyset$, the set $Z = S \cup T$
satisfies \eqref{Eq:MEET}. Since $^\UP$ distributes over unions,
we have $Z^\UP = S^\UP \cup T^\UP = Y$. 
Trivially, $T^\DOWN \subseteq Z^\DOWN$. On the other hand, if $z \in Z^\DOWN$, 
then $R(z) \subseteq S \cup T$.
We have $R(z) \subseteq T$, because $R(z) \nsubseteq T$ 
implies $z \in V$, but this is impossible because $V = \emptyset$. Thus,
$Z^\DOWN = T^\DOWN$ and \eqref{Eq:MEET} holds.

\medskip

Now we prove that condition (b) of Theorem~\ref{Thm:DC} implies $V=\emptyset$, 
which by previous observation yields that $\mathit{RS}$ is a complete lattice.
Suppose $V \neq \emptyset$. Then, there exists
$v \in T$ such that $R(v) \nsubseteq T$ and 
$R(v) \subseteq S \cup T$. This also means that there is an element
$s \in R(v)$ with $s\in S\setminus T$. As $v \, R \, s$, by Theorem~\ref{Thm:DC}
there exists $c_{v} \in R(c_v) \subseteq R(v)\cap R(s)$ such that 
$R(c_{v})\subseteq R(x)$ for all $x \, R \, c_{v}$. 
Then $c_v \, R \,s$, and in particular, we have
\begin{equation} \label{EQ:INCL}
c_{v} \in R(c_v) \subseteq R(v)\subseteq S\cup T. 
\end{equation}
Observe that $c_{v} \notin T$, because $c_v \in T = T^{\DOWN\UP} $ means 
that there is  $a \in T^{\DOWN}$ with $a \, R \, c_{v}$. 
Since $R(c_{v})\subseteq R(x)$ for all $x \, R \, c_{v}$,
we get $s \in R(c_{v}) \subseteq R(a)\subseteq T^{\DOWN \UP} \subseteq T$, but
this is not possible because $s \in S\setminus T$. 
Therefore, $c_{v} \in S$, and we have $R(c_{v}) \nsubseteq S\cup T$ by
Lemma~\ref{Lem:ExistsS}. But this contradicts \eqref{EQ:INCL}, 
and we deduce $V = \emptyset$, which, as we have already noted, 
implies that $\mathit{RS}$ is a complete lattice.

Finally, since $\mathit{RS}$ is a complete lattice, it is isomorphic to a complete subdirect 
product of $\wp(U)^\DOWN$ and $\wp(U)^\UP$ by Corollary~\ref{Cor:Subdirect}.
Since $R$ is a tolerance induced by an irredundant covering, in view of Proposition~\ref{Prop:Boolean},
$\wp(U)^\DOWN$ and $\wp(U)^\UP$ are complete atomistic Boolean lattices. Thus, they are
completely distributive and algebraic, too. Hence, $\mathit{RS}$, as a complete subdirect product
of two completely distributive algebraic lattices is also completely distributive and algebraic.
\end{proof}

\begin{remark} \label{Rem:Bonikowski}
In \cite{Bonikowski1998}, Z.~Bonikowski with his co-authors considered rough set systems
of approximation pairs based on coverings. They presented a necessary and sufficient 
condition under which their system forms a complete lattice. This condition was given in terms of 
representative elements and representative coverings. To be more formal, let us recall
some notions from their work. Let $\mathcal{C} \subseteq \wp(U)$ be a covering. For any $x \in U$,
the family $\mathit{md}(x) = \min \{ K \in \mathcal{C} \mid x \in K \}$ is called the
\emph{minimal description of $x$}. In addition, for any $K \in \mathcal{C}$, 
the element $x \in K$ is called a \emph{representative element} of $K$ if
for all $S \in \mathcal{C}$, $x \in S$ implies $K \subseteq S$. The covering
$\mathcal{C}$ is called \emph{representative} if every set in $\mathcal{C}$ has
a representative element.

For any set $X \subseteq U$, the family $\mathcal{C}_*(X) = \{ K \in \mathcal{C} \mid K \subseteq X\}$ 
is called the \emph{sets bottom-approximating $X$}. The family 
$\mathit{Bn}(X) = \bigcup \{ \mathit{md}(x) \mid x \in X \setminus \bigcup \mathcal{C}_*(X)  \}$ 
is referred to the \emph{sets approximating the boundary of $X$}, and the family
$\mathcal{C}^*(X) = \mathcal{C}_*(X) \cup \mathit{Bn}(X)$ consists of the
\emph{sets top-approximating $X$}. It is proved in  \cite{Bonikowski1998} that
the coordinatewise ordered set $\{ (\mathcal{C}_*(X),\mathcal{C}^*(X)) \mid X \subseteq U\}$ is a complete lattice
if and only if
\begin{enumerate}[\rm (i)]
 \item the covering $\mathcal{C}$ is representative;
 \item every $K \in \mathcal{C}$ consisting at least two elements has at least two representative elements.
\end{enumerate}

Suppose that $R$ is a tolerance induced by an irredundant 
covering $\mathcal{C}$ of $U$. Then, by our Theorem~\ref{Thm:Dist2Lattice}, the ordered structure 
$\mathit{RS} = \{ (X^\DOWN,X^\UP) \mid X \subseteq U \}$ is always an algebraic completely
distributive lattice. On the other hand, the irreducible covering $\mathcal{C}$ is representable,
because by Proposition~\ref{Prop:IrredundantCovering}, for each $K \in \mathcal{C}$, there exists $d \in U$ 
such that $R(d)= B$. It is easy to observe that this $d$ is a representative element of $B$.
Thus, irredundant coverings always satisfy (i). But for an irredundant covering forming a lattice
in the sense of  \cite{Bonikowski1998}, also condition (ii) must be satisfied. Clearly, this means
that for every $K \in \mathcal{C}$ consisting at least two elements, there must be two distinct elements 
$d$ and $d'$ such that $R(d) = R(d') = K$. By this, it is now obvious that the systems $\mathit{RS}$ and
$\{ (\mathcal{C}_*(X),\mathcal{C}^*(X)) \mid X \subseteq U\}$ cannot be isomorphic in general.

Note also that in the case of an irredundant covering $\mathcal{C}$, the \emph{maximal description}
$\mathit{MD}(x) = \max \{ K \in \mathcal{C} \mid x \in K \}$ coincides with the minimal
description $\mathit{md}(x)$ of any $x \in U$ (cf. \cite{Restrepo2013,Yao2012}).
\end{remark}

An \emph{Alexandrov topology} is a topology $\mathcal{T}$ that
contains all arbitrary intersections of its members. Alexandrov
topologies are also called \emph{complete rings of sets}.
It is known that a lattice $L$ is  isomorphic to an Alexandrov topology
if and only if $L$ is completely distributive and algebraic (see e.g. \cite{DaPr02}). By
Theorem~\ref{Thm:Dist2Lattice}, $\mathit{RS}$ is isomorphic to some Alexandrov topology 
whenever $R$ is induced by an irredundant covering of $U$.

A \emph{Heyting algebra} $L$ is a bounded distributive lattice such that for all 
$a,b \in L$, there is a greatest element $x$ of $L$ such that $a \wedge x \leq b$.
This element is the \emph{relative pseudocomplement} of $a$ with respect to $b$, 
and is denoted $a \Rightarrow b$. It is well known that any completely distributive
lattice $L$ is a Heyting algebra such that the relative pseudocomplement is defined as 
\begin{equation} \label{Eq:Heyting}
x \Rightarrow y  =  \bigvee \big \{ z \in L \mid z \wedge x \leq y \big \}.
\end{equation}
Therefore, if $R$ is a tolerance induced by an irredundant covering of $U$, then 
$\mathit{RS}$ is a Heyting algebra.

A \textit{Kleene algebra} is a structure $\mathbb{A} = (A, \vee, \wedge, {\sim}, 0, 1)$ such that
$A$ is a bounded distributive lattice and for all $a,b \in A$:
\begin{enumerate}[({K}1)]
\item ${\sim}\,{\sim}a  =  a$,
\item $a \leq b  \text{ if and only if }  {\sim}b \leq {\sim}a$,
\item $a \wedge {\sim}a  \leq  b \vee {\sim}b$.
\end{enumerate}
According to R. Cignoli \cite{Cign86}, a \emph{quasi-Nelson algebra} is a Kleene algebra 
$\mathbb{A}$ such that for each pair $a$ and $b$ of its elements, the relative pseudocomplement
$a \Rightarrow ({\sim} a \vee b)$ exists. In quasi-Nelson algebras, 
$a \Rightarrow ({\sim} a \vee b)$ is denoted simply by
$a \to b$ and this is called the \emph{weak relative pseudocomplement} of $a$
with respect to $b$. Obviously, each Kleene algebra such that its
underlying lattice forms a Heyting algebra is a quasi-Nelson algebra.
A \emph{Nelson algebra} is a quasi-Nelson algebra 
$(A, \vee, \wedge, \to, {\sim}, 0, 1)$ satisfying the equation
\begin{equation*}\label{Eq:Nelson}
(a\wedge b)\rightarrow c = a \rightarrow (b\rightarrow c).
\end{equation*}

In the case of quasiorders, it is shown by J.~J{\"a}rvinen, S.~Radeleczki, and L.~Veres 
\cite{JRV09} that $\mathit{RS}$ forms a complete sublattice of $\wp(U) \times\wp(U)$ 
ordered by the coordinatewise set-inclusion relation.
In addition, we have proved in \cite{JarRad} that $\mathit{RS}$ determines a Nelson
algebra.

Let the operation $\sim$ on $\mathit{RS}$ be defined as in \eqref{Eq:Negation}.

\begin{proposition} \label{Prop:QuasiNelson}
Let $R$ be a tolerance induced by an irredundant covering of $U$. Then, the algebra
\[ \mathbb{RS} = (\mathit{RS},\cup,\cap,{\sim},(\emptyset,\emptyset), (U,U))\]
is a quasi-Nelson algebra.
\end{proposition}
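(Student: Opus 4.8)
The plan is to check directly that $\mathbb{RS}$ is a Kleene algebra whose lattice reduct is a Heyting algebra; by the observation recorded just before the statement (every Kleene algebra whose underlying lattice forms a Heyting algebra is a quasi-Nelson algebra), this is enough, with $a \to b := a \Rightarrow ({\sim}a \vee b)$.

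First I would assemble what is already available. Since $R$ is induced by an irredundant covering, Theorem~\ref{Thm:Dist2Lattice} shows that $\mathit{RS}$ is an algebraic, completely distributive lattice; being completely distributive, it is a Heyting algebra via \eqref{Eq:Heyting}, and it is bounded with $(\emptyset,\emptyset)$ and $(U,U)$. By Corollary~\ref{Cor:Subdirect}, $\mathit{RS}$ is moreover a complete subdirect product of $\wp(U)^\DOWN$ and $\wp(U)^\UP$, hence a complete sublattice of $\wp(U)^\DOWN \times \wp(U)^\UP$; consequently the operations $\cap$ and $\cup$ of $\mathbb{RS}$ are the meet and join of that product, given by the coordinatewise formulas displayed at the beginning of Section~\ref{Sec:OrderedSets}. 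Also, $\mathit{RS}$ is closed under $\sim$ (it is self-dual by $\sim$), and on $\wp(U)^\DOWN \times \wp(U)^\UP$ the map $\sim$ is a De~Morgan operation; restricted to $\mathit{RS}$ it therefore satisfies (K1) ${\sim}{\sim}a = a$ and (K2) $a \leq b \iff {\sim}b \leq {\sim}a$. Distributivity of the lattice gives the bounded distributive lattice part of the Kleene axioms.

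It then remains to verify the Kleene inequality (K3) $a \wedge {\sim}a \leq b \vee {\sim}b$. Writing $a = (X^\DOWN, X^\UP)$, so ${\sim}a = (X^{\UP c}, X^{\DOWN c})$, the meet formula together with the reflexivity inclusion $X^\DOWN \subseteq X^\UP$ gives that the first coordinate of $a \wedge {\sim}a$ is $X^\DOWN \cap X^{\UP c} = \emptyset$. Dually, writing $b = (Y^\DOWN, Y^\UP)$, the join formula together with $Y^\DOWN \subseteq Y^\UP$ gives that the second coordinate of $b \vee {\sim}b$ contains $Y^\DOWN \cup Y^{\DOWN c} = U$, hence equals $U$. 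Thus $a \wedge {\sim}a$ has first coordinate $\emptyset$ while $b \vee {\sim}b$ has second coordinate $U$, so $a \wedge {\sim}a \leq b \vee {\sim}b$ holds trivially. Therefore $\mathbb{RS}$ is a Kleene algebra, its lattice reduct is a Heyting algebra, and hence $\mathbb{RS}$ is a quasi-Nelson algebra.

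As for difficulty: there is no real obstacle once Theorem~\ref{Thm:Dist2Lattice} and Corollary~\ref{Cor:Subdirect} are in hand. The only points requiring care are (a) recognising that $\cap$ and $\cup$ in the statement denote the lattice operations of $\mathit{RS}$, computed inside the product $\wp(U)^\DOWN \times \wp(U)^\UP$, rather than naive coordinatewise set operations, and (b) invoking reflexivity of $R$ at exactly the two places needed to collapse the relevant coordinates in (K3).
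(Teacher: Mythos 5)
Your proposal is correct and follows essentially the same route as the paper: invoke Theorem~\ref{Thm:Dist2Lattice} for bounded complete distributivity (hence the Heyting structure), note (K1)--(K2) from the De~Morgan property of $\sim$, and verify (K3) by observing that the first coordinate of $a\wedge{\sim}a$ is $\emptyset$ while the second coordinate of $b\vee{\sim}b$ is $U$. The paper's computation of those two coordinates is exactly the one you give, so there is nothing further to add.
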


\begin{proof}
If $R$ is a tolerance induced by an irredundant covering of $U$, then by Theorem~\ref{Thm:Dist2Lattice},
$\mathit{RS}$ is a complete distributive lattice bounded by $(\emptyset,\emptyset)$ and $(U,U)$. 
As we have already noted, conditions (K1) and (K2) are satisfied.
Let $\mathcal{A}(X) = (X^\DOWN,X^\UP)$ and $\mathcal{A}(Y) = (Y^\DOWN,Y^\UP)$ be in $\textit{RS}$. Then,
\begin{align*}
\mathcal{A}(X) \wedge {\sim} \mathcal{A}(X) &= (X^\DOWN \cap X^{c \DOWN} , (X^\UP \cap X^{c \UP})^{\DOWN \UP} ) 
                                         = (\emptyset, (X^\UP \setminus X^\DOWN)^{\DOWN \UP}), \mbox{ and} \\
\mathcal{A}(Y) \vee {\sim} \mathcal{A}(Y) &= ( (Y^\DOWN \cup Y^{c \DOWN})^{\UP \DOWN}), Y^\UP \cup  Y^{c \UP} ) 
= ((Y^{\DOWN\UP} \cup Y^{c \DOWN \UP})^\DOWN,U).
\end{align*}
Hence, $\mathcal{A}(X) \wedge {\sim} \mathcal{A}(X) \leq \mathcal{A}(Y) \vee {\sim} \mathcal{A}(Y)$, and condition (K3) holds also.

Since $\mathit{RS}$ is a Heyting algebra when $R$ is a tolerance induced by an irredundant covering of $U$,
the Kleene algebra $\mathbb{RS}$ is a quasi-Nelson algebra.
\end{proof}

Note that (K3) has nothing to do with distributivity, so if $\mathit{RS}$ is a lattice,
${\sim}$ satisfies conditions (K1)--(K3), and even $R$ is induced by an irredundant covering of $U$, 
the algebra $\mathbb{RS}$ does not necessarily form a Nelson algebra. For instance, if $R$ is a 
tolerance on $U = \{a,b,c\}$ such that $R(a) = \{a,b\}$, $R(b) = U$, and $R(c) = \{b,c\}$, the 
quasi-Nelson algebra $\mathbb{RS}$ is not a Nelson algebra.

\medskip%
For a tolerance $R$ on $U$ and any $X \subseteq U$, we denote the restriction of 
$R$ to $X$ by $R_X$ and by $\mathit{RS}_X$ the
set of all rough sets determined by the relation $R_X$ on $X$. It is clear that for
an equivalence $R$, the relation $R_X$ is an equivalence and $\mathit{RS}_X$
is a lattice for all $X \subseteq U$. Similar observation holds for quasiorders.

Let us introduce the following condition related to $R$-paths: 
\begin{itemize}
 \item[(C)] For any $R$-path $(a_0,\ldots,a_4)$ of length $4$, there
 exist $0\leq i,j \leq 4$ such that $|i - j| \geq 2$ and $a_i \, R \, a_j$.
 \end{itemize}

\begin{lemma} \label{Lem:Sufficient} 
 Let $R$ be a tolerance on $U$. If $\mathit{RS}_X$ is a lattice
 for all $X \subseteq U$ with $|X| = 5$, then $R$ satisfies condition\/ {\rm (C)}.
\end{lemma}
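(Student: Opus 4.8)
The plan is to argue by contraposition: assuming that $R$ fails condition~(C), I will produce a five-element subset $X\subseteq U$ for which $\mathit{RS}_X$ is not a lattice, contradicting the hypothesis.

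Suppose (C) does not hold. Then there is an $R$-path $(a_0,a_1,a_2,a_3,a_4)$ of length $4$ such that $a_i \, R \, a_j$ fails whenever $|i-j|\geq 2$. Set $X=\{a_0,a_1,a_2,a_3,a_4\}$; since the elements of an $R$-path are distinct, $|X|=5$. As $R$ is a tolerance, for the restriction $R_X$ we have $a_i \, R_X \, a_i$ for all $i$ and $a_i \, R_X \, a_{i+1}$ for $0\leq i\leq 3$, while the assumed property of the path says there are no further $R_X$-relations among the $a_i$. Reading off the neighbourhoods, $R_X(a_0)=\{a_0,a_1\}$, $R_X(a_1)=\{a_0,a_1,a_2\}$, $R_X(a_2)=\{a_1,a_2,a_3\}$, $R_X(a_3)=\{a_2,a_3,a_4\}$, and $R_X(a_4)=\{a_3,a_4\}$.

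Now apply the relabelling $a_0\mapsto a$, $a_1\mapsto b$, $a_2\mapsto c$, $a_3\mapsto d$, $a_4\mapsto e$: the tolerance space $(X,R_X)$ becomes precisely the one considered in Example~\ref{Ex:Counter}. Hence $\mathit{RS}_X$ is order-isomorphic to the ordered set $\mathit{RS}$ of that example, which is shown there not to be a lattice (for instance, the images of $(a,abc)$ and $(\emptyset,abcd)$ have no least upper bound). This contradicts the assumption that $\mathit{RS}_X$ is a lattice for every $X\subseteq U$ with $|X|=5$, so $R$ must satisfy~(C).

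Virtually no computation is involved beyond reading off the five neighbourhoods $R_X(a_i)$; the only step needing a little care is to verify that the failure of (C) excludes \emph{every} non-consecutive relation among the five path points, so that $R_X$ actually coincides with (rather than merely contains) the tolerance of Example~\ref{Ex:Counter}. Once that is checked, the non-lattice conclusion for $\mathit{RS}_X$ is inherited directly from the example, and the contrapositive is complete.
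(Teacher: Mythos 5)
Your proof is correct and follows exactly the paper's argument: negate (C) to obtain a length-$4$ path with no non-consecutive $R$-relations, take $X$ to be its five points, and observe that $(X,R_X)$ is precisely the tolerance of Example~\ref{Ex:Counter}, whose $\mathit{RS}$ is not a lattice. You simply spell out the neighbourhood computation that the paper leaves implicit.
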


\begin{proof}
 Suppose $R$ does not satisfy (C). Then, there exists an $R$-path
 $(a_0, \ldots,a_4)$ such that $a_i \, R \, a_j$ if and only if
 $|i - j| \leq 1$. Let us choose $X = \{a_0, \ldots,a_4\}$. Then $|X| = 5$ and
 the situation is exactly as in Example~\ref{Ex:Counter}, that is, 
 $\mathit{RS}_X$ is not a lattice 
\end{proof}

Now, we present our second main result.

\begin{theorem} \label{Thm:Main}
If $R$ is a tolerance satisfying\/ {\rm (C)}, then $\mathit{RS}$ is a complete lattice.
\end{theorem}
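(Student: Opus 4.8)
The plan is to imitate the proof of Theorem~\ref{Thm:Dist2Lattice}. By Corollary~\ref{Cor:Subdirect} and the remarks following it, it is enough to show that $\mathit{RS}$ is a complete sublattice of $\wp(U)^\DOWN \times \wp(U)^\UP$, and since $\mathit{RS}$ is self-dual under the De~Morgan operation ${\sim}$, this reduces to: given any $\mathcal{H} \subseteq \wp(U)$, produce a set $Z \subseteq U$ with $Z^\DOWN = \bigcap_{X \in \mathcal{H}} X^\DOWN$ and $Z^\UP = \big(\bigcap_{X \in \mathcal{H}} X^\UP\big)^{\DOWN\UP}$. As in Theorem~\ref{Thm:Dist2Lattice} I would set $T = \big(\bigcap_{X \in \mathcal{H}} X\big)^{\DOWN\UP}$ and $Y = \big(\bigcap_{X \in \mathcal{H}} X^\UP\big)^{\DOWN\UP}$, record that $T = T^{\DOWN\UP}$, $Y = Y^{\DOWN\UP}$, $T \subseteq Y^\DOWN$, $T^\DOWN = \big(\bigcap_{X \in \mathcal{H}} X\big)^\DOWN$, $T^\UP \subseteq Y$, and that $|R(x)| \geq 2$ for all $x \in Y \setminus T^\UP$, so that Lemma~\ref{Lem:ExistsS} yields a set $S \subseteq Y^\DOWN \setminus T$ with $S^\UP \cup T^\UP = Y$ and $R(y) \not\subseteq S \cup T$ for every $y \in S$. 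Writing $V = \{\, v \in T \mid R(v) \not\subseteq T \text{ and } R(v) \subseteq S \cup T \,\}$, the computation of Theorem~\ref{Thm:Dist2Lattice} shows that $Z = S \cup T$ works whenever $V = \emptyset$.

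The whole point is therefore the case $V \neq \emptyset$, and this is where condition~{\rm (C)} must enter; the argument of Theorem~\ref{Thm:Dist2Lattice} breaks down because {\rm (C)} does not supply the ``locally minimal'' element $c_v$ that condition~(b) of Theorem~\ref{Thm:DC} gave us. My proposal is to keep $T$ but replace $S$ by $S \setminus D$ for a carefully chosen $D \subseteq S$, and to take $Z = (S \setminus D) \cup T$. One always has $T^\DOWN \subseteq Z^\DOWN$; if $D$ is chosen so that every point of $Y \setminus T^\UP$ still has a neighbour in $S \setminus D$, then $Z^\UP = (S \setminus D)^\UP \cup T^\UP = Y$; and $Z^\DOWN = T^\DOWN$ holds precisely when $D$ meets $R(v) \cap S$ for every $v \in V$ (for such $v$ this set is nonempty since $S \cap T = \emptyset$, and any $z \in Z^\DOWN$ must lie in $T$ because $R(s) \not\subseteq S \cup T$ for $s \in S$). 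To locate such a $D$ I would, for $v \in V$, pick $s \in R(v) \cap S$, an element $a \in T^\DOWN$ with $a\,R\,v$, and $t \in R(s) \setminus (S \cup T)$: the quadruple $(a,v,s,t)$ is automatically an $R$-path of four distinct points, using $a \in T^\DOWN$, $v \notin T^\DOWN$, $S \cap T = \emptyset$ and $t \notin S \cup T$. Extending this path by one step at the $t$-end to a path of length $4$ and applying~{\rm (C)}, the chords $(0,2)$, $(0,3)$, $(1,3)$ are immediately excluded by $R(a) \subseteq T$ and $R(v) \subseteq S \cup T$, so~{\rm (C)} forces a chord incident with the new endpoint; iterating this bookkeeping should identify which members of $S$ are ``inessential'' (not needed to cover $Y \setminus T^\UP$) and can be placed into $D$, and show that discarding all of them at once frees every $v \in V$; an odd/even selection in the spirit of Lemma~\ref{Lem:OddEven} is the natural mechanism for making the global choice of $D$ consistent.

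The step I expect to be the main obstacle is exactly the construction and verification of $D$: one must throw away enough of $S$ to destroy every parasitic lower-approximation element arising from $V$, yet not so much that some point of $Y \setminus T^\UP$ loses all its $S$-neighbours and $Z^\UP$ drops below $Y$. Reconciling these two demands is precisely what {\rm (C)} is tailored for --- the absence of chord-free $R$-paths of length $4$ keeps the conflicts between the elements of $V$ and the covering constraints tame enough for a greedy/parity choice of $D$ to succeed --- but making this precise, and then re-checking the two identities $Z^\DOWN = \bigcap_{X \in \mathcal{H}} X^\DOWN$ and $Z^\UP = \big(\bigcap_{X \in \mathcal{H}} X^\UP\big)^{\DOWN\UP}$ for the resulting $Z$, is the technical heart of the proof and should be at least as delicate as the proof of Lemma~\ref{Lem:ExistsS}.
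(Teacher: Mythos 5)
Your reduction to finding, for each $\mathcal{H}\subseteq\wp(U)$, a set $Z$ with $Z^\DOWN=\bigcap_{X\in\mathcal{H}}X^\DOWN$ and $Z^\UP=(\bigcap_{X\in\mathcal{H}}X^\UP)^{\DOWN\UP}$, the construction of $T$, $Y$, $S$, $V$, and the disposal of the case $V=\emptyset$ all match the paper. But the case $V\neq\emptyset$ is the entire content of the theorem, and there your argument stops at a declaration of intent: the set $D$ is never constructed, and the sentence beginning ``iterating this bookkeeping should identify\dots'' is exactly the step that needs a proof. Moreover, the ansatz $Z=(S\setminus D)\cup T$ is too restrictive. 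For each $v\in V$ you correctly observe that $D$ must meet $R(v)\cap S$; but there may be a point $y\in Y\setminus T^\UP$ all of whose $S$-neighbours are forced into $D$ by these constraints (e.g.\ $R(y)\cap S=\{s_1,s_2\}$ with $R(v_i)\cap S=\{s_i\}$ for two elements $v_1,v_2\in V$), and then no choice of $D$ simultaneously kills all of $V$ and preserves $S^\UP\cup T^\UP=Y$. Condition (C) does not rescue pure removal here.

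The paper's proof resolves this by \emph{adding elements back}: it picks one $q_v\in R(v)\cap(T^\UP\setminus T)$ for each $v\in V$, sets $Q=\{q_v\mid v\in V\}\subseteq(S\setminus T)\cap T^\UP$, and then defines
$P=\{p\in Y\setminus((S\setminus Q)^\UP\cup T^\UP)\mid R(p)\subseteq Q^\UP\}$ and
$Z=(S\setminus Q)\cup T\cup P$. The points of $P$ are precisely those that would lose all coverage under your scheme; putting them into $Z$ restores $Z^\UP\supseteq P$ while the inclusion $P^\UP\subseteq Q^\UP\subseteq S^\UP$ keeps $Z^\UP\subseteq Y$, and a separate argument shows no element of $P$ or $S\setminus Q$ can enter $Z^\DOWN$. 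Condition (C) is then used only once, at the very end: if some $y\in Y\setminus Z^\UP$ survived, one would get $y\in Q^\UP\setminus Q$ with $R(y)\nsubseteq Q^\UP$ (else $y\in P$), and the five elements $a\in T^\DOWN$, $v\in V$, $q_v$, $y$, $u\in R(y)\setminus Q^\UP$ form an $R$-path of length $4$ all of whose non-consecutive pairs are shown to be non-$R$-related --- contradicting (C). Your proposed path $(a,v,s,t,\ldots)$ is anchored at the wrong end (at a witness $t\in R(s)\setminus(S\cup T)$ rather than at the uncovered point $y$), so even the contradiction you aim for is not set up correctly. To repair the proof you need both the add-back set $P$ and the path built from an alleged uncovered point of $Y$.
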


\begin{proof}
Let $\mathcal{H} \subseteq \wp(A)$. As in the proof of Theorem~\ref{Thm:Dist2Lattice},
we need to find a set $Z \subseteq U$ such that $Z^\DOWN = \bigcap_{X \in \mathcal{H}} X^\DOWN$ 
and $Z^\UP = (\bigcap_{X \in \mathcal{H}} X^\UP )^{\DOWN\UP}$.
Let us form now the sets $T$, $Y$, $S$, and $V$ exactly as in the proof of Theorem~\ref{Thm:Dist2Lattice},
meaning that \eqref{Eq:DefT+Y} and \eqref{Eq:DefS} hold. Recall that
\[
V=\{v\in T\mid R(v)\nsubseteq T \text{ and } R(v) \subseteq S\cup T\}.
\]
According to the proof of Theorem~\ref{Thm:Dist2Lattice}, $V = \emptyset$ implies that
$\textit{RS}$ is a complete lattice, hence we may assume $V \neq \emptyset$.
Now, for each $v \in V$, we can  choose an element $q_{v} \in R(v)$ such that $q_{v} \in T^\UP\setminus T$.
Denote by $Q$ the set of these selected elements, that is, $Q = \{q_{v} \mid v\in V\}$. 
Then for all $v\in V$, $q_{v} \in T^\UP$, $q_v \notin T$, and $q_v \in R(v) \subseteq S \cup T$
give $q_{v}\in(S\setminus T)\cap T^\UP$, and so
\begin{equation}\label{Eq:Q}
Q \subseteq (S \setminus T) \cap T^\UP.
\end{equation}

Next, we define a set $P \subseteq Y$ by setting
\begin{equation}\label{Eq:defP}
P = \{p \in Y \setminus ((S\setminus Q)^{\UP} \cup T^{\UP}) \mid R(p)\subseteq Q^{\UP}\}.
\end{equation}
Then $P\subseteq Q^{\UP\DOWN} \subseteq Q^{\UP}$, because for each $p \in P$, 
$R(p)\subseteq Q^{\UP}$. In addition, 
\[ 
P\cap( (S \setminus Q)^\UP \cup T^\UP) = P \cap ((S\setminus Q) \cup T)^\UP = \emptyset, 
\]
that is,
\begin{equation}\label{Eq:ForAllP}
(\forall p \in P) \, R(p) \cap  \big ( (S\setminus Q) \cup T \big ) = \emptyset.
\end{equation}
We have $Q\subseteq T^\UP$ by \eqref{Eq:Q}, which gives
$P\cap Q \subseteq P \cap \big ( (S\setminus Q)^\UP \cup T^\UP \big ) = \emptyset$.

Let us now define the set
\[
Z = (S\setminus Q) \cup T\cup P.
\]
We will prove that 
\[
Z^\DOWN  =  \big ( \bigcap_{X \in \mathcal{H}} X \big )^\DOWN
= T^\DOWN 
\text{ \quad and \quad }
Z^\UP  =  \big (\bigcap_{X \in \mathcal{H}} X^\UP \big )^{\DOWN\UP} = Y.
\]

Trivially, $T^\DOWN \subseteq Z^\DOWN$.  To prove  $Z^\DOWN \subseteq T^\DOWN$, let $z\in Z^\DOWN$. 
Then,
\begin{equation}\label{Eq:Z}
z \in R(z) \subseteq Z = (S \setminus Q) \cup T \cup P,
\end{equation}
and we have $z \in S \setminus Q$, or $z \in T$, or $z \in P$. We first show that $z \in T$.

If $z\in P$, then $R(z)\cap \big ( (S\setminus Q)\cup T \big ) = \emptyset$ by \eqref{Eq:ForAllP},
and \eqref{Eq:Z} gives  $R(z)\subseteq P$. From this we obtain
$R(z) \cap Q \subseteq P\cap Q = \emptyset$. On the other hand, 
$z\in P\subseteq Q^\UP$ yields $R(z)\cap Q \neq \emptyset$, a contradiction.
Similarly, $z \in S\setminus Q$ gives $R(z) \subseteq (S\setminus Q)^\UP$. Then, 
$P \cap ((S \setminus Q)^\UP \cup T^\UP) = \emptyset$ implies 
$P \cap R(z)=\emptyset$ and we must have 
$R(z) \subseteq (S\setminus Q) \cup T \subseteq S \cup T$. 
Since $z \in S$, this contradicts $R(z) \nsubseteq S\cup T$ following from 
\eqref{Eq:DefS}. Hence, the only possibility left is $z \in T$. 

Next, we prove $z \in T^\DOWN$. Suppose, by the way of contradiction, that 
$R(z) \nsubseteq T$. Since $R(z)\subseteq T^\UP$ and, by \eqref{Eq:defP}, 
$P \cap T^\UP = \emptyset$, we obtain $R(z) \cap P \subseteq T^\UP \cap P = \emptyset$, 
which implies $R(z)\subseteq(S\setminus Q)\cup T\subseteq S\cup T$. 
This means that $z\in V$. So, there exists an element $q_{z}\in Q$
with $q_{z}\in R(z)$. By the definition of $Q$, $q_{z}\notin T$. So, 
we have $q_{z}\notin(S\setminus Q)\cup T$, which contradicts 
$R(z)\subseteq (S\setminus Q)\cup T$. Therefore, we have now proved 
$R(z) \subseteq T$, that is, $z\in T^\DOWN$. 

To complete our proof, we need to show that $Z^\UP=Y$. Recall that 
$Y = S^\UP\cup T^\UP$ by \eqref{Eq:DefS}.
By the definition of $Z$, we have $Z^\UP=(S\setminus Q)^\UP\cup T^\UP\cup
P^\UP$. In view of \eqref{Eq:defP}, $R(p) \subseteq Q^\UP$ for all $p \in P$, which
implies $P^\UP = \bigcup_{p \in P} R(p) \subseteq Q^\UP \subseteq S^\UP$, 
because $Q \subseteq (S \setminus T) \cap T^\UP \subseteq S$ holds by \eqref{Eq:Q}.
Hence, we have $Z^\UP\subseteq(S\setminus Q)^\UP\cup T^\UP\cup
S^\UP=T^\UP\cup S^\UP$. We show $Z^\UP=S^\UP\cup T^\UP$ by  
proving $(S^\UP\cup T^\UP) \setminus Z^\UP = \emptyset$.

Assume now that $R$ satisfies (C) and suppose for contradiction that there exists an 
element $y \in(S^\UP\cup T^\UP)\setminus Z^\UP = Y \setminus Z^\UP$. 
Since $Q \subseteq S$, we have $S = Q \cup (S\setminus Q)$ and 
$y\in Q^\UP\cup(S\setminus Q)^\UP\cup T^\UP$.
Because $y \notin Z^\UP = (S\setminus Q)^\UP \cup T^\UP\cup P^\UP$
yields $y \notin (S\setminus Q)^\UP$ and $y \notin T^\UP$, 
we must have $y\in Q^\UP \setminus T^\UP$. 
As $Q\subseteq T^\UP$, we get $y \in Q^\UP\setminus Q$.
Therefore, there are $v\in V\subseteq T$ and $q_{v}\in Q$ 
such that $q_{v}\in R(v)\subseteq S\cup T$, $R(v)\nsubseteq T$,
and $y\in R(q_{v})$. Note that since $y \notin Q$, we have $y\neq q_{v}$. 
Because $q_{v}\notin T$, we obtain $v\neq q_{v}$ also. So, there exist
$v,q_v,y$ such that $v \neq q_v$, $q_v \neq y$, $v \, R \, q_v$,
and $q_v \, R \, y$.

Because $v\in T=T^{\DOWN\UP}$, there is $a \in T^\DOWN$ such 
that $a \, R \, v$. The fact that $R(v) \nsubseteq T$ gives $v \notin T^\DOWN$ and 
hence we must have $a\neq v$. 
Observe also that $R(y)\subseteq Q^\UP$ is not possible. This is because
$y \notin Z^\UP = (S \setminus Q)^\UP \cup T^\UP \cup P^\UP$,
that is, $y \notin (S \setminus Q)^\UP$, $y \notin T^\UP$,
and $y \notin P^\UP$, combined with $y \in S^\UP \cup T^\UP = Y$, 
yield $y \in Y \setminus ((S\setminus Q)^\UP \cup T^\UP)$. 
Hence, by \eqref{Eq:defP}, $R(y)\subseteq Q^\UP$ would imply 
$y\in P\subseteq P^{\UP}$, a contradiction. Therefore,
$R(y) \nsubseteq Q^\UP = \bigcup_{q\in Q} R(q)$, and so there is an element $u\in R(y)$ such that
\begin{equation} \label{Eq:ForAllQ}
(\forall q \in Q) \, u \notin R(q).
\end{equation}
Then $y \, R \,u$, and clearly $u\neq y$, because $y \, R \, q_v$ holds.

We need to prove there are no $R$-related elements in the $R$-path
$(a,v,q_{v},y,u)$ except two consecutive ones. If this is true, then all the elements 
of the path are distinct, because $a \neq v$, $v \neq q_{v}$, $q_{v},\neq y$,
$y\neq u$ and $R$ is reflexive. Since this is a contradiction to our assumption that $R$ satisfies (C), 
there is no $y\in(S^\UP \cup T^\UP) \setminus Z^\UP$, and we may conclude that $Z^\UP = S^\UP \cup T^\UP=Y$,
which finishes the proof.

Indeed, $a \, R \, q_{v}$ implies $q_{v} \in R(a)\subseteq T^{\DOWN \UP} = T$, contradicting 
$q_{v}\notin T$. Similarly, $a \, R \, y$ and $v \, R \, y$
are not possible, because $a,v \in T$ and $y \notin T^\UP$. By \eqref{Eq:ForAllQ}, 
$q_{v} \, R \, u$ cannot hold. Furthermore, $a \, R \,u$ implies 
$u \in R(a) \subseteq T$ and $y \in R(u) \subseteq T^\UP$, a contradiction.
Finally, since $R(v) \subseteq S\cup T$, $v \, R \, u$ implies 
$u \in R(v)\subseteq S\cup T$. Moreover, we get $u \in(S\setminus Q)\cup T$, 
because \eqref{Eq:ForAllQ} implies $u \notin Q$. So, this yields 
$y \in R(u) \subseteq(S \setminus Q)^{\UP}\cup T^{\UP} \subseteq Z^{\UP}$, a
contradiction again. Hence, neither $v \, R \, u$ is possible.
\end{proof}

\begin{lemma}\label{Lem:ConditionC}
Any tolerance $R$ on $U$ satisfies {\rm (C)} if and only if for any $X\subseteq U$,
${R_X}^{3}$ is an equivalence on $X$.
\end{lemma}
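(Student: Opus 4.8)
The plan is to translate both sides into the language of the graph $(X,R_X)$ and recognise them as two ways of forbidding an induced five-vertex path. First I would record the routine preliminaries: for any $X\subseteq U$ the restriction $R_X$ is again a tolerance, so $R_X^3$ is automatically reflexive (because $R_X\subseteq R_X^3$) and symmetric (because $(R_X^3)^{-1}=(R_X^{-1})^3=R_X^3$); hence ``$R_X^3$ is an equivalence on $X$'' is equivalent to ``$R_X^3$ is transitive'', i.e.\ $R_X^6\subseteq R_X^3$. Next, using reflexivity of $R_X$, I would note that $(x,y)\in R_X^n$ holds if and only if $x$ and $y$ lie in the same connected component of $(X,R_X)$ and their graph-distance $d_X(x,y)$ is at most $n$ (pad a geodesic with loops). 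So $R_X^3$ is an equivalence exactly when no two vertices in a common component are at distance $4$, $5$, or $6$.

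For the implication ``$R$ satisfies (C) $\Rightarrow$ $R_X^3$ is an equivalence for all $X$'' I would argue contrapositively. If $R_X^3$ fails to be transitive for some $X$, pick $x,y,z\in X$ with $(x,y),(y,z)\in R_X^3$ and $(x,z)\notin R_X^3$; concatenating the two length-$3$ walks shows $d_X(x,z)$ is finite, and $(x,z)\notin R_X^3$ forces $d_X(x,z)\geq 4$. Now take a geodesic from $x$ to $z$ in $(X,R_X)$ and let $(b_0,b_1,b_2,b_3,b_4)$ be its first five vertices. They are pairwise distinct, consecutive ones satisfy $b_i\,R\,b_{i+1}$ (since $R_X\subseteq R$), and no pair $b_i,b_j$ with $|i-j|\geq 2$ is $R$-related, since such a chord would shorten the geodesic. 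Thus $(b_0,\dots,b_4)$ is an $R$-path of length $4$ violating (C).

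For the converse I again argue contrapositively. If (C) fails, there is an $R$-path $(a_0,\dots,a_4)$ of length $4$ with $a_i\,R\,a_j$ only when $|i-j|\leq 1$. Taking $X=\{a_0,\dots,a_4\}$, the graph $(X,R_X)$ is exactly a five-vertex path, so $(a_0,a_2),(a_2,a_4)\in R_X^2\subseteq R_X^3$ while $d_X(a_0,a_4)=4$ gives $(a_0,a_4)\notin R_X^3$; hence $R_X^3$ is not transitive, so not an equivalence on $X$.

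The one place that needs care is the forward direction: one must extract an $R$-path of length \emph{exactly} $4$ which has \emph{no} non-consecutive $R$-edge, and the idea that makes this painless is to use the initial segment of a geodesic, whose vertices are automatically distinct and which by minimality admits no chords. Everything else is bookkeeping with relational powers and reflexivity.
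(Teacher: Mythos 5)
Your proof is correct and follows essentially the same route as the paper's: both directions reduce to the observation that a chordless $R$-path of length $4$ inside $X$ witnesses simultaneously the failure of (C) and the failure of transitivity of ${R_X}^3$ (the paper argues the forward direction directly, using the chord supplied by (C) to shorten walks and inducting to get ${R_X}^n={R_X}^3$ for $n\geq 3$, while you argue contrapositively by extracting an induced five-vertex path from a geodesic). Your geodesic formulation has the minor advantage of making the distinctness of the five vertices and the absence of chords automatic, a point that the paper's passage from $(x,y)\in {R_X}^4$ to an $R$-path of length $4$ leaves implicit.
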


\begin{proof}
The relation ${R_X}^{3}$ is a tolerance on any $X \subseteq U$ and ${R_X}^{3} \subseteq {R_X}^{4}$. 
If $(x,y) \in {R_X}^{4}$, then there is
an $R$-path $(a_{0},...,a_{4})$ of length $4$ with $a_{0} = x$ and $a_{4} = y$.
Condition (C) implies that there are $0 \leq i,j \leq 4$ such that $|i-j| \geq 2$
and $a_i \, R \, a_j$. Hence,  $(x,y) \in {R_X}^{3}$ and ${R_X}^{4} = {R_X}^{3}$. 
Additionally, we can see by induction that (C) implies ${R_X}^{n} = {R_X}^{3}$ for all $n \geq 3$. Then,
${R_X}^{3}\circ {R_X}^{3} = {R_X}^{6} = {R_X}^{3}$ and hence the tolerance
${R_X}^{3}$ is transitive, that is, ${R_X}^{3}$ is an equivalence. Conversely, let $(a_{0},...,a_{4})$ be an
$R$-path of length $4$, $X = \{a_{0},...,a_{4}\}$, and suppose that ${R_X}^{3}$
is an equivalence. Then $(a_{0},a_{4}) \in {R_X}^{4} \subseteq {R_X}^{6}= 
{R_X}^{3} \circ {R_X}^{3} \subseteq {R_X}^{3}$ implies 
$(a_{0},a_{4}) \in {R_X}^{3}$. Observe that this is possible only if condition (C) holds.
\end{proof}

\begin{corollary} \label{Cor:Condition}
Let $R$ be a tolerance on $U$. Then, the following are equivalent:
\begin{enumerate}[\rm (a)]
 \item $\mathit{RS}_{X}$ is a complete lattice for all $X\subseteq U$.

 \item $\mathit{RS}_{X}$ is a lattice for all $X\subseteq U$ with $|X| = 5$.

 \item For any $X \subseteq U$, ${R_X}^{3}$ is an equivalence on $X$.
\end{enumerate}
\end{corollary}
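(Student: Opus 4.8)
The plan is to derive the cycle of implications (a)$\Rightarrow$(b)$\Rightarrow$(c)$\Rightarrow$(a) by assembling results already proved, the only genuinely new point being that condition (c) is inherited by restrictions. The implication (a)$\Rightarrow$(b) is immediate, since every complete lattice is a lattice; so if $\mathit{RS}_X$ is a complete lattice for every $X\subseteq U$, then in particular it is a lattice for every five-element $X$. For (b)$\Rightarrow$(c), I would first invoke Lemma~\ref{Lem:Sufficient}: its hypothesis is precisely that $\mathit{RS}_X$ is a lattice for all $X$ with $|X|=5$, which is (b), and its conclusion is that $R$ satisfies condition~(C). Then Lemma~\ref{Lem:ConditionC} translates ``$R$ satisfies (C)'' into ``${R_X}^3$ is an equivalence on $X$ for every $X\subseteq U$'', which is exactly (c).

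The step that needs a small extra argument is (c)$\Rightarrow$(a). Fix an arbitrary $X\subseteq U$; the goal is that $\mathit{RS}_X$ is a complete lattice. Since $\mathit{RS}_X$ is by definition the rough-set structure of the tolerance $R_X$ on the universe $X$, Theorem~\ref{Thm:Main} reduces the task to showing that $R_X$ satisfies condition~(C). Applying Lemma~\ref{Lem:ConditionC} with the set $X$ and the tolerance $R_X$ in place of $U$ and $R$, this in turn amounts to checking that $\bigl((R_X)_Y\bigr)^3$ is an equivalence on $Y$ for every $Y\subseteq X$. Here I would use the trivial but essential identity $(R_X)_Y = R_Y$ valid whenever $Y\subseteq X\subseteq U$ (the restriction of a restriction is a restriction), so that $\bigl((R_X)_Y\bigr)^3 = {R_Y}^3$, which is an equivalence on $Y$ by hypothesis~(c), as $Y$ is in particular a subset of $U$. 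Hence $R_X$ satisfies~(C), Theorem~\ref{Thm:Main} applies, and $\mathit{RS}_X$ is a complete lattice.

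The main obstacle is purely bookkeeping: one must carefully keep track of which set plays the role of the ambient universe when invoking Lemma~\ref{Lem:ConditionC} and Theorem~\ref{Thm:Main} (the ambient set $U$ versus the working set $X$), and verify the hereditary property of~(c) via $(R_X)_Y=R_Y$. No further combinatorial analysis of $R$-paths or of the lattice $\mathit{RS}$ is required beyond what the cited results already provide.
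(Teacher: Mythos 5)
Your proposal is correct and follows essentially the same route as the paper: (a)$\Rightarrow$(b) trivially, (b)$\Rightarrow$(c) via Lemma~\ref{Lem:Sufficient} followed by Lemma~\ref{Lem:ConditionC}, and (c)$\Rightarrow$(a) by applying Lemma~\ref{Lem:ConditionC} to each $R_X$ and then invoking Theorem~\ref{Thm:Main}. The only difference is that you spell out the bookkeeping identity $(R_X)_Y = R_Y$ needed to see that (c) is inherited by restrictions, which the paper leaves implicit.
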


\begin{proof}
The implication (a)$\Rightarrow$(b) is trivial. If (b) holds, then $R$ satisfies condition (C) according to 
Lemma~\ref{Lem:Sufficient}. Hence, by Lemma~\ref{Lem:ConditionC}, 
every ${R_X}^{3}$ is an equivalence, and we have (b)$\Rightarrow$(c). Again,
by Lemma~\ref{Lem:ConditionC}, (c) implies that $R_{X}$ satisfies (C) for all $X\subseteq U$. 
Hence, by applying  Theorem~\ref{Thm:Main} for each $X\subseteq U$ and $R_X$, we obtain (a), and 
so (c)$\Rightarrow$(a).
\end{proof}

\begin{example}
Let $\mathcal{S} = (U,A,\{V_a\}_{a \in A})$ be an information in which each attribute is two-valued,
that is, $V_a = \{0,1\}$ for all $a \in A$. For any $B \subseteq A$, the \emph{weak $B$-indiscernibility} is 
defined so that for all $x,y \in U$,
\[ 
(x,y) \in \mathit{wind}_B \iff (\exists a \in B)\, a(x) = a(y). 
\]
Let $B \subseteq A$ and assume that there is a $\mathit{wind}_B$-path $(x_1,x_2,x_3,x_4,x_5)$ in $U$.
This means that for each $1 \leq i \leq 4$, there is an attribute $a \in B$ such that $a(x_i) = a(x_{i+1})$.

Assume that condition (C) does not hold. Then, in particular, $(x_1,x_3) \notin  \mathit{wind}_B$, $(x_3,x_5) \notin \mathit{wind}_B$, 
and $(x_1,x_5) \notin  \mathit{wind}_B$. 
This means that for all $a \in B$, $a(x_1) \neq a(x_3)$ and $a(x_3) \neq a(x_5)$. But since the attribute sets are two-valued,
this must imply that $a(x_1) = a(x_5)$ for all $a \in B$. Thus, $(x_1,x_5) \in  \mathit{wind}_B$, a contradiction.

The following information system shows that (C) does not necessarily hold in cases when attribute sets have at least three values.
\begin{table}[h]
\centering
\begin{center}
\begin{tabular}{c|cc}
$U$ & $a$ & $b$ \\ \hline
$1$ & $0$ & $0$\\
$2$ & $0$ & $1$\\
$3$ & $1$ & $1$\\
$4$ & $1$ & $2$\\
$5$ & $2$ & $2$
\end{tabular}
\end{center}
\end{table}
\end{example}

\section{Disjoint representation of rough sets} \label{Sec:DisjointRepresentations}

Disjoint representations of rough sets were introduced by P.~Pagliani in \cite{Pagliani97}.
Each rough set $(X^\DOWN,X^\UP)$ may as well be represented as a
pair $(X^\DOWN,X^{\UP c})$, called the \emph{disjoint rough set} of $X$.
Clearly, $(X^\DOWN,X^{\UP c}) \in \wp(U)^\DOWN \times \wp(U)^\DOWN$
and now $X^{\UP c}$ can be interpreted as the set of elements that certainly are outside $X$,
while $X^\DOWN$ consists of elements certainly belonging to $X$. Let us denote
\[ \mathit{dRS} = \{ (X^\DOWN, X^{\UP c}) \mid X \subseteq U \}, \]
and define an order-isomorphism $\phi$ between $\wp(U)^\DOWN \times \wp(U)^\UP$ and
$\wp(U)^\DOWN \times \wp(U)^{\DOWN \mathrm{op}}$ by $(A,B) \mapsto (A,B^c)$. Obviously, $\phi$
is also an order-isomorphism between $\mathit{RS}$ and $\mathit{dRS}$, when
$\mathit{dRS}$ is ordered by the order of $\wp(U)^\DOWN \times \wp(U)^{\DOWN \mathrm{op}}$.
We define a De~Morgan operation $\mathfrak{c}$ on $\wp(U)^\DOWN \times \wp(U)^{\DOWN \mathrm{op}}$ by
\begin{equation}\label{Eq:Swap}
\mathfrak{c} \colon (A,B) \to (B,A).
\end{equation}
Clearly, for all $(A,B) \in \wp(U)^\DOWN \times \wp(U)^\UP$,
\[ \phi({\sim}(A,B)) = \phi(B^c,A^c) = (B^c,A) = \mathfrak{c}(A,B^c) = \mathfrak{c}(\phi(A,B)),\]
where ${\sim}$ is the De~Morgan operation on $\wp(U)^\DOWN \times \wp(U)^\UP$ defined in \eqref{Eq:Negation}.
Additionally, if $(X^\DOWN,X^{\UP c}) \in \mathit{dRS}$, then  $\mathfrak{c}(X^\DOWN,X^{\UP c})  = 
(X^{c \DOWN},X^{c \UP c}) \in \mathit{dRS}$.

In \cite{Pagliani97} Pagliani showed that in the case of equivalences, disjoint rough sets are closely 
connected to the construction of Nelson algebras by Sendlewski \cite{Sendlewski90}.  
Pagliani's  results are generalized for quasiorders in \cite{JPR12}, where it is proved that for any 
quasiorder $R$ on $U$,
\[ 
 \mathit{dRS} = \{ (A,B) \in \wp(U)^\DOWN \times \wp(U)^\DOWN \mid A \cap B = \emptyset \text{ and }
   \mathcal{S} \subseteq A \cup B \}, 
\]
where $\mathcal{S}$ is the set of singleton $R(x)$-sets defined as in \eqref{EQ:DefineS}. 
By applying this equality it is possible to show that on $\mathit{dRS}$, and thus on $\mathit{RS}$, 
a Nelson algebra can be defined by applying Sendlewski's construction. However, in the case of tolerances 
the situation is quite different, because $\mathit{RS}$ and $\mathit{dRS}$ do not always form lattices, and
even they do, the lattices are not necessarily distributive. However, in case
the tolerance $R$ induced by an irredundant covering of $U$, these lattices are distributive, and a 
quasi-Nelson algebra can be defined on $\mathit{RS}$ and $\mathit{dRS}$, as shown in
Proposition~\ref{Prop:QuasiNelson}. Anyway, these quasi-Nelson algebras are not necessarily
Nelson algebras.

In Section~\ref{Sec:OrderedSets}, we defined the increasing representation of
rough sets, that is,
\[
\mathcal{I}(\mathit{RS}) = \{ (A,B) \in \wp(U)^{\DOWN} \times \wp(U)^{\UP}
\mid A^{\UP}\subseteq B^\DOWN \text{ and } \mathcal{S} \subseteq A\cup B^{c}\},
\]
and showed that this is the Dedekind--MacNeille completion of $\mathit{RS}$.
If we map the set $\mathcal{I}(\mathit{RS})$ by the isomorphism $\phi$, we obtain the set
\[
\mathcal{D}(\mathit{RS}) = \{ (A,B) \in \wp(U)^{\DOWN} \times \wp(U)^{\DOWN}
\mid A^{\UP} \cap B^\UP = \emptyset \text{ and } \mathcal{S} \subseteq A\cup B\}.
\]
The set $\mathcal{D}(\mathit{RS})$ is called the \emph{disjoint representation of rough sets}. Obviously,
the map $\mathfrak{c}$ defined in \eqref{Eq:Swap} is a De~Morgan operation
on $\mathcal{D}(\mathit{RS})$,  and if $\mathit{RS}$ is a complete lattice, 
then $\mathit{RS}$ and $\mathcal{D}(\mathit{RS})$ can be identified by the 
map $\phi$.

We end this work by studying the connection between $\mathcal{D}(\mathit{RS})$ 
and the concept lattice $\mathfrak{B}(\mathbb{K})$ defined by the context 
$\mathbb{K} = (U,U,R^c)$. In \cite{Kwuida04,Wille2000}, it is considered for a  
concept $(A,B)$ of an arbitrary context its \emph{weak negation} by 
\[
(A, B)^\bigtriangleup  = (A^{c \prime \prime}, A^{c \prime}) 
\]
and its \textit{weak opposition} by
\[
(A,B)^\bigtriangledown = (B^{c \prime}, B^{c \prime\prime}).
\]
Especially, we are here considering the weak opposition operation $^\bigtriangledown$, 
which satisfies for all concepts $(A,B)$ and $(C,D)$:
\[
 (A,B) \leq (C,D)^{\bigtriangledown} \iff (C,D) \leq (A,B)^{\bigtriangledown}.
\]
We already noted in Section~\ref{Sec:ToleranceApproximations} that the concept lattice 
of the context $\mathbb{K} = (U,U,R^c)$ is 
$\mathfrak{B}(\mathbb{K}) = \{ (A,A^\top) \mid A \in \wp(U)^\DOWN \}$,
where $^\top$ is the orthocomplement operation of $\wp(U)^\DOWN$.
Recall that $A^\top = A^{\prime} = A^{\UP c} = A^{c \DOWN}$.
For $(A,B) \in \mathfrak{B}(\mathbb{K})$, the weak negation and the weak opposition are then
defined by 
\[ 
(A, B)^\bigtriangleup = (A^{\DOWN \top}, A^{\DOWN})
\text{ \ and \ }
(A, B)^\bigtriangledown  =  (B^{\DOWN}, B^{\DOWN \top}).
\]
We now consider the complete lattice
${\mathfrak{B}}(\mathbb{K}) \times {\mathfrak{B}}(\mathbb{K})^\mathrm{op}$,
where ${\mathfrak{B}}(\mathbb{K})^\mathrm{op}$ is the dual of the concept lattice
${\mathfrak{B}}(\mathbb{K})$, that is, ${\mathfrak{B}}(\mathbb{K})^\mathrm{op}$ is ordered by 
\[
 (A_1,B_1) \leq (A_2,B_2) \iff A_1 \supseteq A_2 \iff B_1 \subseteq B_2.
\]
Let $\mathrm{ext}(\alpha)$ denote the extent $A$ of a concept $\alpha =(A,B)$. We
define the set
\[
\mathcal{FC}(\mathit{RS}) = \{ (\alpha,\beta) \in \mathfrak{B}(\mathbb{K}) \times \mathfrak{B}(\mathbb{K})
\mid \beta \leq \alpha^{\bigtriangledown} \text{ in $\mathfrak{B}(\mathbb{K})$ and } \mathcal{S} \subseteq 
\mathrm{ext}(\alpha) \cup \mathrm{ext}(\beta) \},
\]
and we call it the \emph{formal concept representation of rough sets}.
We order the set $\mathcal{FC}(\mathit{RS})$ by the order of 
${\mathfrak{B}}(\mathbb{K}) \times {\mathfrak{B}}(\mathbb{K})^\mathrm{op}$.
 
\begin{proposition}
Let $R$ be a tolerance on $U$ and \ $\mathbb{K} = (U,U,R^c)$. 
\begin{enumerate}[\rm (a)]
 \item $\mathcal{FC}(\mathit{RS})$ is a complete sublattice of 
 $\mathfrak{B}(\mathbb{K}) \times \mathfrak{B}(\mathbb{K})^\mathrm{op}$.
 \item The complete lattices $\mathcal{FC}(\mathit{RS})$ and
 $\mathcal{D}(\mathit{RS})$ are isomorphic.
\end{enumerate}
\end{proposition}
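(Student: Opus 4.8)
The plan is to transport everything through the order-isomorphism $\mathfrak{B}(\mathbb{K})\cong\wp(U)^\DOWN$ of Proposition~\ref{Prop:ConceptIsom}(a). Writing $\mathfrak{B}(\mathbb{K})=\{(A,A^\top)\mid A\in\wp(U)^\DOWN\}$, the extent map $\mathrm{ext}\colon(A,A^\top)\mapsto A$ is an order-isomorphism $\mathfrak{B}(\mathbb{K})\to\wp(U)^\DOWN$, and hence $\Theta\colon(\alpha,\beta)\mapsto(\mathrm{ext}(\alpha),\mathrm{ext}(\beta))$ is an order-isomorphism of the complete lattices $\mathfrak{B}(\mathbb{K})\times\mathfrak{B}(\mathbb{K})^\mathrm{op}$ and $\wp(U)^\DOWN\times\wp(U)^{\DOWN\mathrm{op}}$. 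Since an order-isomorphism between complete lattices sends complete sublattices to complete sublattices, both (a) and (b) will follow once I check that $\Theta$ restricts to a bijection of $\mathcal{FC}(\mathit{RS})$ onto $\mathcal{D}(\mathit{RS})$: indeed $\mathcal{D}(\mathit{RS})=\phi(\mathcal{I}(\mathit{RS}))$, where $\mathcal{I}(\mathit{RS})$ is a complete sublattice of $\wp(U)^\DOWN\times\wp(U)^\UP$ by Proposition~\ref{Prop:Completion}(a) and $\phi\colon(A,B)\mapsto(A,B^c)$ is an order-isomorphism of the two ambient product lattices, so $\mathcal{D}(\mathit{RS})$ is a complete sublattice of $\wp(U)^\DOWN\times\wp(U)^{\DOWN\mathrm{op}}$.

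The one substantive step is to translate the two defining conditions of $\mathcal{FC}(\mathit{RS})$ into those of $\mathcal{D}(\mathit{RS})$. Fix $\alpha=(A,A^\top)$ and $\beta=(B,B^\top)$ with $A,B\in\wp(U)^\DOWN$, so $\Theta(\alpha,\beta)=(A,B)$. Using the stated formula $\alpha^{\bigtriangledown}=((A^\top)^\DOWN,(A^\top)^{\DOWN\top})$ and the order \eqref{Eq:ConceptOrder}, the condition $\beta\leq\alpha^{\bigtriangledown}$ in $\mathfrak{B}(\mathbb{K})$ amounts to $B=\mathrm{ext}(\beta)\subseteq\mathrm{ext}(\alpha^{\bigtriangledown})=(A^\top)^\DOWN$. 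Since $({^\UP},{^\DOWN})$ is a Galois connection on $\wp(U)$, the adjunction $X^\UP\subseteq Y\iff X\subseteq Y^\DOWN$ gives $B\subseteq(A^\top)^\DOWN\iff B^\UP\subseteq A^\top$; and as $A^\top=A^{\UP c}$ for $A\in\wp(U)^\DOWN$, this reads $B^\UP\subseteq A^{\UP c}$, i.e. $A^\UP\cap B^\UP=\emptyset$. The second condition $\mathcal{S}\subseteq\mathrm{ext}(\alpha)\cup\mathrm{ext}(\beta)$ is literally $\mathcal{S}\subseteq A\cup B$. Hence $(\alpha,\beta)\in\mathcal{FC}(\mathit{RS})$ if and only if $(A,B)\in\mathcal{D}(\mathit{RS})$, so $\Theta$ restricts to an order-isomorphism of $\mathcal{FC}(\mathit{RS})$ onto $\mathcal{D}(\mathit{RS})$.

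Assembling the pieces: for (a), $\mathcal{FC}(\mathit{RS})=\Theta^{-1}(\mathcal{D}(\mathit{RS}))$ is the image of a complete sublattice of $\wp(U)^\DOWN\times\wp(U)^{\DOWN\mathrm{op}}$ under the ambient order-isomorphism $\Theta^{-1}$, hence a complete sublattice of $\mathfrak{B}(\mathbb{K})\times\mathfrak{B}(\mathbb{K})^\mathrm{op}$; for (b), both $\mathcal{FC}(\mathit{RS})$ and $\mathcal{D}(\mathit{RS})$ are then complete lattices, and the restriction of $\Theta$ is the desired isomorphism. The only place where care is needed — essentially the only non-bookkeeping step — is the translation $\beta\leq\alpha^{\bigtriangledown}\iff\mathrm{ext}(\alpha)^\UP\cap\mathrm{ext}(\beta)^\UP=\emptyset$, in particular checking that the weak-opposition formula is genuinely being applied inside $\mathfrak{B}(\mathbb{K})$ for $\mathbb{K}=(U,U,R^c)$ and that $\mathrm{ext}(\alpha^{\bigtriangledown})=(A^\top)^\DOWN$; everything else is transport of structure along the isomorphisms $\mathrm{ext}$, $\phi$, and $\Theta$.
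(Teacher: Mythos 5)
Your proposal is correct and follows essentially the same route as the paper: the paper defines the map $\varphi\colon(A,B)\mapsto((A,A^\top),(B,B^\top))$, which is exactly the inverse of your $\Theta$, shows it is an order-isomorphism of $\wp(U)^\DOWN\times\wp(U)^{\DOWN\mathrm{op}}$ onto $\mathfrak{B}(\mathbb{K})\times\mathfrak{B}(\mathbb{K})^\mathrm{op}$ carrying $\mathcal{D}(\mathit{RS})$ onto $\mathcal{FC}(\mathit{RS})$, and deduces (a) from the fact that $\mathcal{D}(\mathit{RS})=\phi(\mathcal{I}(\mathit{RS}))$ is a complete sublattice via Proposition~\ref{Prop:Completion}. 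Your verification of the key equivalence $\beta\leq\alpha^{\bigtriangledown}\iff A^\UP\cap B^\UP=\emptyset$ via the adjunction $B\subseteq(A^\top)^\DOWN\iff B^\UP\subseteq A^\top$ matches the paper's chain $A^\UP\cap B^\UP=\emptyset\iff B\subseteq A^{\top\DOWN}$ in substance.
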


\begin{proof} First, let us define a map $\varphi \colon \wp(U)^\DOWN \times \wp(U)^\DOWN 
\to \mathfrak{B}(\mathbb{K}) \times  \mathfrak{B}(\mathbb{K})$
by setting
\[(A,B) \mapsto ((A,A^\top),(B,B^\top)).
\]
Trivially, the map $\varphi$ is well defined. Next we show that $\varphi$
is an order-isomorphism between the complete lattices $\wp(U)^\DOWN \times \wp(U)^{\DOWN\mathrm{op}}$
and $\mathfrak{B}(\mathbb{K}) \times  \mathfrak{B}(\mathbb{K})^\mathrm{op}$.

If $(A,B),(C,D)\in \wp(U)^\DOWN \times \wp(U)^\DOWN$, then
\begin{align*}
& (A,B) \leq (C,D) \text{ in $\wp(U)^\DOWN \times \wp(U)^{\DOWN\mathrm{op}}$} &\iff \\ 
& A \subseteq C \text{ and } B \supseteq D & \iff \\
& (A,A^\top) \leq (C,C^\top) \text{ and } (B,B^\top) \geq (D,D^\top) \text{ in $\mathfrak{B}(\mathbb{K})$} &\iff \\ 
& \varphi(A,B)\leq \varphi(C,D) \text{ in $\mathfrak{B}(\mathbb{K}) \times \mathfrak{B}(\mathbb{K})^\mathrm{op}$}
\end{align*}
Thus, $\varphi$ is an order-embedding. If $((A,A^\top), (B,B^\top)) \in 
\mathfrak{B}(\mathbb{K}) \times \mathfrak{B}(\mathbb{K})$, then
$A,B \in \wp(U)^\DOWN$ and $\varphi(A,B) = ((A,A^\top), (B,B^\top))$. Therefore, the 
map $\varphi$ is also onto, and it is an order-isomorphism.

Next, we prove that $\mathcal{FC}(\mathit{RS})$ is the image
of $\mathcal{D}(\mathit{RS})$ under $\varphi$. Note first that for all
$A,B \in \wp(U)^\DOWN$,
\begin{align*}
 A^{\UP} \cap B^\UP = \emptyset  \iff A^{\UP \UP} \cap B = \emptyset 
  \iff B \subseteq A^{\UP \UP c} = A^{c \DOWN \DOWN} = A^{\top \DOWN}.
\end{align*}
Since $(A,A^\top)^\bigtriangledown = (A^{\top \DOWN}, A^{\top \DOWN \top})$,
we have that 
\[ A^{\UP} \cap B^\UP = \emptyset \iff (B,B^\top) \leq (A,A^\top)^\bigtriangledown  \text{ in $\mathfrak{B}(\mathbb{K})$}.\]
Additionally, $\mathcal{S} \subseteq A \cup B = \mathrm{ext}(A,A^\top) \cup \mathrm{ext}(B,B^\top)$.
These facts imply 
\[
(A,B) \in\mathcal{D}(\mathit{RS}) \iff \varphi(A,B) \in\mathcal{FC}(\mathit{RS}).
\] 
Since $\varphi$ is a bijection, we get 
\[
\mathcal{FC}(\mathit{RS}) = (\varphi \circ \varphi^{-1})(\mathcal{FC}(\mathit{RS})) = \varphi(\mathcal{D}(\mathit{RS})).
\]
Hence, $\varphi$ determines an order-isomorphism between the complete lattices $\mathcal{D}(\mathit{RS})$
and $\mathcal{FC}(\mathit{RS})$, which proves (b).

Because $\mathcal{I}(\mathit{RS})$ is a complete sublattice of $\wp(U)^\DOWN \times \wp(U)^\UP$
by Proposition~\ref{Prop:Completion}(b), its image
$\mathcal{D}(\mathit{RS})$ under the isomorphism $\phi \colon (A,B) \mapsto (A,B^c)$ 
is a complete sublattice of $\wp(U)^\DOWN \times \wp(U)^{\DOWN \mathrm{op}}$.
This implies that the image $\mathcal{FC}(\mathit{RS})$ of $\mathcal{D}(\mathit{RS})$ under $\varphi$
is a complete sublattice of 
${\mathfrak{B}}(\mathbb{K})\times {\mathfrak{B}}(\mathbb{K})^\mathrm{op}$, and claim (a)
is proved. 
\end{proof}

Since for all $(\alpha,\beta) \in \mathcal{FC}(\mathit{RS})$,
$\beta \leq \alpha^{\bigtriangledown} \iff \alpha \leq \beta^{\bigtriangledown}$,
it is easy to see that $\mathfrak{c}^* \colon (\alpha,\beta) \mapsto (\beta,\alpha)$ 
is a De~Morgan operation on $\mathcal{FC}(\mathit{RS})$. Up to isomorphism,
this operation is the same as in $\mathcal{I}(\mathit{RS})$ and
$\mathcal{D}(\mathit{RS})$. Namely, if $(A,B) \in \mathcal{D}(\mathit{RS})$, then
\[
\varphi( \mathfrak{c}(A,B)) = \varphi(B,A) = ((B,B^\top),(A,A^\top)) =
\mathfrak{c}^*((A,A^\top),(B,B^\top)) =  \mathfrak{c}^*(\varphi(A,B) ). \]

\medskip\medskip\noindent%
We conclude this section by giving the following summary of rough representations:
\begin{itemize}
 \item For any tolerance $R$ on $U$, the representations $\mathcal{I}(\mathit{RS})$,
 $\mathcal{D}(\mathit{RS})$, and $\mathcal{FC}(\mathit{RS})$ are Dedekind--MacNeille
 completions of $\mathit{RS}$ equipped with De~Morgan operations satisfying (K3) 
 that are identical up to isomorphism.
\item The ordered sets $\mathit{RS}$ and $\mathit{dRS}$ are isomorphic, and they are
 complete lattices if and only if $\mathit{RS}$ is a complete sublattice of
 $\wp(U)^\DOWN \times \wp(U)^\UP$, or, equivalently, $\mathit{dRS}$ is a complete sublattice of
 $\wp(U)^\DOWN \times \wp(U)^{\DOWN \textrm{op}}$. 
\item If  $\mathit{RS}$ and $\mathit{dRS}$ are complete lattices, then
 they are identical to $\mathcal{I}(\mathit{RS})$ and $\mathcal{D}(\mathit{RS})$, respectively.
 This implies also $\mathit{RS} \cong \mathcal{FC}(\mathit{RS})$.
\item If $R$ induced by an irredundant covering of $U$, then $\mathit{RS}$, $\mathit{dRS}$,  $\mathcal{I}(\mathit{RS})$,
 $\mathcal{D}(\mathit{RS})$, and $\mathcal{FC}(\mathit{RS})$ determine isomorphic quasi-Nelson algebras. 
\end{itemize}

\section{Some concluding remarks}

In this work, we have considered rough set systems determined by so-called element-based approximation pairs induced by a tolerance relation. 
For any tolerance, we were able to give the Dedekind--MacNeille completion of  $\mathit{RS}$  in terms of formal concept analysis, and also in the 
terms of increasing and, respectively, disjoint representations. Under some certain conditions, the rough set system   $\mathit{RS}$  forms a complete 
lattices. Particularly, if the tolerance is induced by an irredundant covering of the universe, its rough set lattice is algebraic and completely 
distribute, and a Kleene algebra (in fact, even a quasi-Nelson algebra) may be defined on it.

We learned that weak similarity satisfies condition (C) in the case attributes are two-valued, but this is generally no longer true even
for three-valued attributes. Additionally, we observed that tolerances induced by irredundant 
coverings arise in incomplete and approximate information systems in the presence of learning examples. 
We also would like to emphasize that if we have a finite universe $U$ and we want that the rough set lattice $\mathit{RS}$  is distributive, 
this means that necessarily $\mathit{RS}$  
is determined by a tolerance induced by an irredundant covering. This is because if a finite lattice is distributive, it is 
also completely distributive. Since quite often in studies of rough set theory it is assumed that the universe is finite, this means 
that case of $\mathit{RS}$  forming a distributive lattice is completely characterized.

In the future, we will study under which conditions rough sets systems determined by tolerances define Nelson algebras or 3-valued {\L}ukasiewicz algebras, 
because it is known that in case of quasiorders the rough set systems form Nelson algebras, and in the particular case of equivalences, 
these systems define 3-valued {\L}ukasiewicz algebras. We propose a deeper analysis of the tolerance relations induced by irredundant coverings, 
and their relations to information systems. 
It might also be fruitful to study lattice-theoretical properties of completions considered in Section~\ref{Sec:DisjointRepresentations},
that is, how the properties of the ortholattices $\wp(U)^\DOWN \cong \wp(U)^\UP$ effect to the completions. These constructions may
have some similarities to the one of Sendlewski \cite{Sendlewski90} or they could be based on generalizations of
Heyting algebras \cite{Chajda03}, for instance.
Finally, we note that it still remains an open question under which condition on the tolerance $R$, the rough set system forms a lattice.

\section*{Acknowledgements}
The authors would like to thank the referees and  Yiyu Yao for their constructive comments.

% \bib, bibdiv, biblist are defined by the amsrefs package.
\begin{bibdiv}
\begin{biblist}

\bib{AbuDonia2012}{article}{
      author={Abu-Donia, H.M.},
      author={Salama, A.S.},
       title={Generalization of pawlak’s rough approximation spaces by using
  $\delta\beta$-open sets},
        date={2012},
     journal={International Journal of Approximate Reasoning},
      volume={53},
       pages={1094\ndash 1105},
}

\bib{Birk95}{book}{
      author={Birkhoff, Garrett},
       title={Lattice theory},
     edition={3},
      series={Colloquim publications},
   publisher={American Mathematical Society (AMS)},
     address={Providence, Rhode Island},
        date={1995},
      volume={XXV},
}

\bib{Bonikowski1998}{article}{
      author={Bonikowski, Zbigniew},
      author={Bryniarski, Edward},
      author={Wybraniec-Skardowska, Urszula},
       title={Extensions and intentions in the rough set theory},
        date={1998},
     journal={Information Sciences},
      volume={107},
       pages={149\ndash 167},
}

\bib{Chajda03}{article}{
      author={Chajda, I.},
      author={Radeleczki, S.},
       title={On varieties defined by pseudocomplemented nondistributive lattices},
        date={2003},
     journal={Publicationes Mathematicae Debrecen},
      volume={63},
       pages={737\ndash 750},
} 
 
\bib{Cign86}{article}{
      author={Cignoli, Roberto},
       title={The class of {K}leene algebras satisfying an interpolation property and {N}elson algebras},
        date={1986},
     journal={Algebra Universalis},
      volume={23},
       pages={262\ndash 292},
}

\bib{Com93}{article}{
      author={Comer, Stephen~D.},
       title={On connections between information systems, rough sets, and algebraic logic},
        book={
        date={1993},
        title={Algebraic methods in logic and computer science},
        series={Banach Center Publications}
        },
        pages={117\ndash 124},
}

\bib{DaPr02}{book}{
      author={Davey, Brian~A.},
      author={Priestley, Hilary~A.},
       title={Introduction to lattices and order},
     edition={2},
   publisher={Cambridge University Press},
        date={2002},
}

\bib{DemOrl02}{book}{
      author={Demri, St{\'e}phane~P.},
      author={Or{\l}owska, Ewa~S.},
       title={Incomplete information: Structure, inference, complexity},
   publisher={Springer},
     address={Berlin/Heidelberg},
        date={2002},
}

\bib{Dzik2013}{article}{
      author={Dzik, Wojciech},
      author={J{\"a}rvinen, Jouni},
      author={Kondo, Michiro},
       title={Representing expansions of bounded distributive lattices with {G}alois connections in terms of rough sets},
        date={2013},
     journal={International Journal of Approximate Reasoning},
        doi={http://dx.doi.org/10.1016/j.ijar.2013.07.005},
}

\bib{ganter1999formal}{book}{
      author={Ganter, Bernhard},
      author={Wille, Rudolf},
       title={Formal concept analysis: Mathematical foundations},
   publisher={Springer},
     address={Berlin/Heidelberg},
        date={1999},
}

\bib{Grat98}{book}{
      author={Gr{\"a}tzer, George},
       title={General lattice theory},
     edition={2},
   publisher={Birkh{\"a}user},
     address={Basel},
        date={1998},
}

\bib{Jarv99}{thesis}{
      author={J{\"a}rvinen, Jouni},
       title={Knowledge representation and rough sets},
        type={Ph.D. Thesis},
        date={1999},
      institution={University of Turku, Department of mathematics. TUCS Dissertations 14},
}

\bib{Jarv01}{article}{
      author={J{\"a}rvinen, Jouni},
       title={Approximations and rough sets based on tolerances},
        date={2001},
     journal={Lecture Notes in Computer Science},
      volume={2005},
       pages={182\ndash 189},
}

\bib{Jarv04}{article}{
      author={J{\"a}rvinen, Jouni},
       title={The ordered set of rough sets},
        date={2004},
     journal={Lecture Notes in Computer Science},
      volume={3066},
       pages={49\ndash 58},
}

\bib{Jarv07}{article}{
      author={J{\"a}rvinen, Jouni},
       title={Lattice theory for rough sets},
        date={2007},
     journal={Transactions on Rough Sets},
      volume={VI},
       pages={400\ndash 498},
}

\bib{JPR12}{article}{
      author={J{\"a}rvinen, Jouni},
      author={Pagliani, Piero},
      author={Radeleczki, S\'{a}ndor},
       title={Information completeness in {N}elson algebras of rough sets induced by quasiorders},
        date={2013},
     journal={Studia Logica},
      volume={101},
       pages={1073\ndash 1092},
}

\bib{JarRad}{article}{
      author={J\"{a}rvinen, Jouni},
      author={Radeleczki, S{\'a}ndor},
       title={Representation of {N}elson algebras by rough sets determined by quasiorders},
        date={2011},
     journal={Algebra Universalis},
      volume={66},
       pages={163\ndash 179},
}

\bib{JRV09}{article}{
      author={J{\"a}rvinen, Jouni},
      author={Radeleczki, S{\'a}ndor},
      author={Veres, Laura},
       title={Rough sets determined by quasiorders},
        date={2009},
     journal={Order},
      volume={26},
       pages={337\ndash 355},
}

\bib{Kryszkiewicz1998}{article}{
      author={Kryszkiewicz, Marzena},
       title={Rough set approach to incomplete information systems},
        date={1998},
     journal={Information Sciences},
      volume={112},
       pages={39\ndash 49},
}

\bib{Kwuida04}{thesis}{
      author={Kwuida, L\'{e}onard},
       title={Dicomplemented lattices. {A} contextual generalization of {B}oolean algebras},
        type={Ph.D. Thesis},
     institution={Technical University of Dresden},
        date={2004},
}

\bib{Ma2012}{article}{
      author={Ma, Liwen},
       title={On some types of neighborhood-related covering rough sets},
        date={2012},
     journal={International Journal of Approximate Reasoning},
      volume={53},
       pages={901\ndash 911},
}

\bib{Mani08}{article}{
      author={Mani, A.},
       title={Esoteric rough set theory: Algebraic semantics of a generalized {VPRS} and {VPFRS}},
        date={2008},
     journal={Transactions on Rough Sets},
      volume={VIII},
       pages={175\ndash 223},
}

\bib{Pagliani97}{article}{
      author={Pagliani, Piero},
       title={Rough set systems and logico-algebraic structures},
       book={
         editor={Or{\l}owska, E.},
         title={Incomplete information: Rough set analysis},
         publisher={Physica-Verlag},
         place={Heidelberg},
         date={1997}
       },
       pages={109\ndash 190},
}

\bib{pawlak1981information}{article}{
      author={Pawlak, Zdzis{\l}aw},
       title={Information systems theoretical foundations},
        date={1981},
     journal={Information systems},
      volume={6},
       pages={205\ndash 218},
}

\bib{Pawl82}{article}{
      author={Pawlak, Zdzis{\l}aw},
       title={Rough sets},
        date={1982},
     journal={International Journal of Computer and Information Sciences},
      volume={11},
       pages={341\ndash 356},
}

\bib{Pomykala88}{article}{
      author={Pomyka{\l}a, J.~A.},
       title={On definability in the nondeterministic information system},
        date={1988},
     journal={Bulletin of the Polish Academy of Science. Mathematics},
      volume={36},
       pages={193\ndash 210},
}

\bib{PomPom88}{article}{
      author={Pomyka{\l}a, Jacek},
      author={Pomyka{\l}a, Janusz~A.},
       title={The {S}tone algebra of rough sets},
        date={1988},
     journal={Bulletin of Polish Academy of Sciences. Mathematics},
      volume={36},
       pages={495\ndash 512},
}

\bib{Restrepo2013}{article}{
      author={Restrepo, Mauricio},
      author={Cornelis, Chris},
      author={G{\'o}mez, Jonatan},
       title={Duality, conjugacy and adjointness of approximation operators in covering-based rough sets},
        date={2013},
     journal={International Journal of Approximate Reasoning},
        doi={http://dx.doi.org/10.1016/j.ijar.2013.08.002},
}

\bib{Sendlewski90}{article}{
      author={Sendlewski, Andrzej},
       title={Nelson algebras through {H}eyting ones~{I}},
        date={1990},
     journal={Studia Logica},
      volume={49},
       pages={105\ndash 126},
}

\bib{Shreider}{book}{
      author={Shreider, Yu.~A.},
       title={Ravenstvo, skhodstvo, poryadok (Equality, Similarity, Order)},
   publisher={Nauka},
     address={Moskow},
        date={1971},
}

\bib{UmaThesis}{thesis}{
      author={Umadevi, D.},
       title={A study on the ordered structure of rough sets},
        type={Ph.D. Thesis},
     institution={Madurai Kamaraj University, Tamil Nadu, India},
        date={2012},
}

\bib{Wille2000}{article}{
      author={Wille, Rudolf},
       title={Boolean concept logic},
        date={2000},
     journal={Lecture Notes in Computer Science},
      volume={1867},
       pages={317\ndash 331},
}

\bib{YaoLin1996}{article}{
      author={Yao, Y.~Y.},
      author={Lin, T.~Y.},
       title={Generalization of rough sets using modal logics},
        date={1996},
     journal={Intelligent Automation and Soft Computing},
      volume={2},
       pages={103\ndash 120},
}

\bib{Yao2012}{article}{
      author={Yao, Yiyu},
      author={Yao, Bingxue},
       title={Covering based rough set approximations},
        date={2012},
     journal={Information Sciences},
      volume={200},
       pages={91\ndash 107},
}

\bib{yao2004concept}{article}{
      author={Yao, Y.~Y.},
       title={Concept lattices in rough set theory},
        date={2004},
   booktitle={Proceedings of the 23rd international meeting of the {North American Fuzzy Information Processing Society (NAFIPS 2004)}},
       pages={796\ndash 801},
}

\bib{Zeeman62}{article}{
      author={Zeeman, E.~C.},
       title={The topology of the brain and visual perception},
       book={
         editor={M.~K.~Fort, Jr.},
         title={Topology of 3-Manifolds},
         publisher={Prentice-Hall},
         address={Englewood Cliffs, NJ},
         date={1962}
        },
}

\end{biblist}
\end{bibdiv}

\end{document}